\newcommand{\pdfgraphics}{\ifpdf\DeclareGraphicsExtensions{.pdf,.jpg}\else\fi}
\definecolor{citegreen}{rgb}{0,0.6,0}
\definecolor{refred}{rgb}{0.8,0,0}
\numberwithin{equation}{section}
\theoremstyle{plain}
\newtheorem*{theorem*}{Theorem}
\newtheorem{teo}{Theorem}[section]
\newtheorem{lemma}[teo]{Lemma}
\newtheorem{prop}[teo]{Proposition}
\theoremstyle{definition}
\newtheorem{dfnz}[teo]{Definition}
\newtheorem{prob}[teo]{Problem}
\theoremstyle{remark}
\newtheorem{rem}[teo]{Remark}
\numberwithin{equation}{section}
\renewcommand{\t }{\tau }
\newcommand{\intbar}{\etaathop{\int\etaakebox(-13.5,0){\rule[4pt]{.7em}{0.3pt}}
		\kern-6pt}\nolimits}
\newcommand{\be}{\begin{equation}}
\newcommand{\ee}{\end{equation}}
\newcommand{\bea}{\begin{equation*}}
\newcommand{\eea}{\end{equation*}}
\def\be{\begin{equation}}
\def\ee{\end{equation}}
\def\bea{\begin{eqnarray*}}
	\def\bean{\begin{eqnarray}}
	\def\eean{\end{eqnarray}}
	\def\eea{\end{eqnarray*}}
\begin{document}
\pdfgraphics 

\title{Willmore flow of planar networks}

\author{Harald Garcke 
\footnote{Fakult\"at f\"ur Mathematik, Universit\"at Regensburg, Universit\"atsstrasse 31, 
93053 Regensburg, Germany}
 \and Julia Menzel
\footnotemark[1]
\and Alessandra Pluda
\footnotemark[1]
}

\maketitle

\begin{abstract}
\noindent  Geometric gradient flows for elastic energies of Willmore
type play an important role in mathematics and in many applications.
The evolution of elastic curves has been studied in detail both for
closed as well as for open curves. Although elastic flows for networks
also have many interesting features, they have not been studied so far
from the point of view of mathematical analysis. So far it was not even 
clear what are appropriate boundary conditions at junctions. In this paper we
give a well-posedness result for Willmore flow of networks in different
geometric settings and hence lay a foundation for further mathematical
analysis. A main point in the proof is to check whether different 
proposed boundary
conditions lead to a well posed problem. In this context one has to
check the  Lopatinskii--Shapiro condition in order to apply
the Solonnikov theory for linear parabolic systems in H\"older spaces
which is needed in a fixed point argument. We also show that 
the solution we get is unique in a purely geometric sense.

\end{abstract}

\textbf{Mathematics Subject Classification (2010)}:  35K52, 53C44
(primary); 35K61, 35K41 (secondary).


\section{Introduction}
A planar network $\mathcal{N}$ is a connected set composed 
of a finite number of sufficiently smooth curves $\mathcal{N} ^i$ that meet at junctions. 
We consider two types of networks of three curves: Theta--networks and Triods. 
The three regular curves of a Theta--network intersect each other at their endpoints in two triple junctions.
A Triod is composed by three regular curves that intersect each other at a triple junction 
and have the other three endpoints on the boundary of an 
open  set $\Omega\subset\mathbb{R}^2$.

\begin{figure}[H]
\begin{center}
\begin{tikzpicture}[scale=1]
\draw[shift={(0,0)}] 
(-1.73,-1.3) 
to[out= 180,in=160, looseness=1] (-2.8,0) 
to[out= 140,in=150, looseness=1.5] (-1.5,1) 
(-2.8,0)
to[out=-30,in=90, looseness=0.9] (-1.25,-0.75)
(-1.5,1)
to[out= -30,in=90, looseness=0.9] (-1,0)
to[out= -90,in=10, looseness=0.9] (-1.25,-0.75)
to[out= -140,in=0, looseness=0.9](-1.73,-1.3);
\path[shift={(0,0)}] 
(-1.35,-0.95)node[right]{$O^1$}
 (-1.5,0.05)[left] node{$\gamma^2$}
 (-0.6,.9)[left] node{$\gamma^1$}
 (-2.6,-0.95)[left] node{$\gamma^3$}
 (-3.2,0.35) node[below] {$O^2$};
 \draw[white]
 (-3,-1.5)--(-1,-1.5);
 \end{tikzpicture}\qquad
 \begin{tikzpicture}[scale=0.6]
\draw[shift={(-2,0)}]
(-3.73,0) node[left]{$P^1$}
to[out= 50,in=-150, looseness=1] (-2,0)
to[out= 60,in=180, looseness=1.5] (-0.45,1.55)
(-2,0)
to[out= -60,in=180, looseness=0.9] (-0.75,-1.75);
\draw[color=black,scale=1,domain=-3.141: 3.141,
smooth,variable=\t,shift={(-3.72,0)},rotate=0]plot({2.*sin(\t r)},
{2.*cos(\t r)});
\path[font=\large,shift={(-2,0)}]
    (0.8,0) node[below] {$\Omega$}
    (-3,0.13) node[below] {$\gamma^1$}
    (-1.5,1) node[right] {$\gamma^3$}
    (-0.7,-1)[left] node{$\gamma^2$}
    (-2.2,0.75) node[below] {$O$}
    (-0.21,1.35)node[above]{$\,\,\,\,\,\, P^3$}
    (-0.55,-1.65) node[below] {$\,\,\,\, P^2$};
\end{tikzpicture}\quad\quad
\end{center}
\begin{caption}{A Theta--network and a Triod}
\end{caption}
\end{figure}
We consider the elastic energy of the network, penalizing its global length, that is
$$
E_\mu(\mathcal{N}):=\sum_{i=1}^3\int_{\mathcal{N}^i}\left(\left( k^i\right)^2+\mu\right)\mathrm{d}s^i\,,\quad\quad\mu>0\,,$$
where $s^i$ denotes the arc length parameter of the curve $\mathcal{N}^i$
and $k^i$ the curvature.
We are interested in the $L^2$--gradient flow of the energy $E_\mu$:
we start with an initial network (Triod or Theta) and let it evolve with a normal velocity that induces the steepest descent of the energy with respect to the $L^2$--inner product.
The curves need to stay attached during the evolution but the junctions are allowed to move. 
\medskip

Our result is inspired by several others in the case of curves~\cite{lin2, dalpoz, dapozspe, dziukkuwertsch}.
The problem for networks was first proposed by Barrett, Garcke and N\"{u}rnberg: 
in~\cite{bargarnu} 
 it is shown that if $\left(\mathcal{N}(t)\right)_{t\in[0,T]}$ is 
a time dependent family of networks evolving according to the $L^2$--gradient flow of $E_\mu$, then 
the normal velocity of each of its curves is 
$$
-2k_{ss}-k^3+\mu k\,.
$$
Depending on which constraints one imposes at the endpoints of the curves one obtains a different flow. 
We consider two different types of properties at the triple junctions.
In one case we only demand that the curves stay attached during the evolution 
which we call $C^0$--flow. 
In the other case we additionally impose an angle condition at the triple junction(s). We require all angles to be equal and name this evolution $C^1$--flow.
We note that there are several variants  in the case of a Triod since different boundary conditions are possible at the endpoints on $\partial\Omega$, 
namely Navier (fixed endpoints and zero curvature) and clamped 
(position and unit tangent vectors are prescribed).
Depending on whether we consider the $C^0$-- or the $C^1$--flow
different natural boundary conditions have to be prescribed at the
triple junction(s). These follow from computing the first variation of
the energy taking the constraints at the junction into account. The main
question we would like to answer is whether the resulting boundary 
conditions at the triple junction(s) lead to a well-posed evolution
problem. The answer will be positive with the exception of one
degenerate geometric situation in  the $C^0$--case.

We are looking for classical solutions to the $C^0$-- and $C^1$--flow:
we require that
each evolving curve $\mathcal{N}^i$ of the network $\mathcal{N}$ admits a time dependent parametrisation
$$
\gamma^i\in C^{\frac{4+\alpha}{4},4+\alpha}([0,T]\times[0,1])\,.
$$ In particular the initial network needs to fulfil certain compatibility conditions to be admissible. These will be specified in 
Definitions~\ref{admg} and~\ref{admg1}.

\medskip

The result of this paper is the following:
\begin{theorem*}
Let $\mathcal{N}_0$ be a geometrically admissible initial network for the $C^0$--flow (or the $C^1$--flow). Then there exists a positive time $T$ such that within the interval $[0,T]$ the $C^0$--flow ($C^1$--flow) admits a unique solution 
(as precisely defined in~\ref{geometricsolution} and~\ref{geometricsolution1}).
\end{theorem*}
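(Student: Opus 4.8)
The plan is to establish short--time existence and uniqueness via the classical strategy for quasilinear parabolic systems: linearise, solve the linear problem in parabolic H\"older spaces using Solonnikov's theory, and then recover the nonlinear flow through a contraction mapping (fixed point) argument. Since the flow is geometric and hence invariant under tangential reparametrisation, the first preliminary step is to fix a convenient \emph{analytic} formulation. The normal velocity of each curve is $-2k_{ss}-k^3+\mu k$, but solving only for the normal component leaves the tangential motion undetermined and the PDE degenerate; I would therefore prescribe a specific tangential velocity (a DeTurck--type choice) so that each curve $\gamma^i$ satisfies a genuinely parabolic fourth--order equation of the form $\partial_t\gamma^i = -2|\partial_x\gamma^i|^{-4}\partial_x^4\gamma^i + (\text{lower order})$, with principal part behaving like $-\partial_x^4$. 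The boundary and junction conditions (concurrency of the three curves, the angle/$C^1$ conditions at the triple junction, and either Navier or clamped conditions on $\partial\Omega$, together with the natural third-- and fourth--order conditions coming from the first variation) complete the system.

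Next I would linearise the system about the initial network $\mathcal{N}_0$, freezing coefficients, to obtain a linear parabolic system of the Petrovskii type treated by Solonnikov. The goal here is to invoke the general existence, uniqueness and maximal regularity theorem in the spaces $C^{\frac{4+\alpha}{4},4+\alpha}([0,T]\times[0,1])$, which yields a bounded solution operator for the linear inhomogeneous problem. Crucially, Solonnikov's theorem applies only once two structural hypotheses are verified: the parabolicity of the interior operator (immediate from the principal symbol $\sim\xi^4$), and the \emph{complementing condition} at every portion of the boundary, i.e.\ the Lopatinskii--Shapiro condition for the given boundary operators. I would also need the initial datum to satisfy the compatibility conditions of the appropriate order at $t=0$, which is precisely where geometric admissibility (Definitions~\ref{admg} and~\ref{admg1}) enters.

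The main obstacle, and the technical heart of the argument, is verifying the Lopatinskii--Shapiro condition for each set of boundary conditions. Concretely, one passes to the half--line model problem obtained by freezing coefficients and taking Fourier--Laplace transforms in time and the tangential variable; one must then show that the associated boundary value problem for the ODE system in the normal variable has only the trivial decaying solution for all relevant frequencies with $\mathrm{Re}\,\lambda\ge 0$, $\lambda\neq 0$ (and $\xi\neq 0$). For the triple junction this means checking a coupled system across the three curves, and for the endpoints on $\partial\Omega$ one checks the Navier and clamped conditions separately. I anticipate that this condition holds in all cases \emph{except} one degenerate geometric configuration in the $C^0$--flow, matching the exception announced in the introduction; isolating exactly which algebraic degeneracy of the determinant of the boundary symbol produces this failure is the delicate computation.

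Once the linear theory is in place, I would set up the fixed point map on a small ball in $C^{\frac{4+\alpha}{4},4+\alpha}$: given an approximate parametrisation, recompute the nonlinear coefficients, solve the linear problem, and show that for $T$ small enough the map is a contraction, using that the nonlinear terms are locally Lipschitz in these H\"older norms and that short time makes the coupling constants small. This yields a unique solution of the analytic (DeTurck) problem on some $[0,T]$. The final step is to pass back to the purely geometric setting: I would show that any geometric solution can be reparametrised to solve the analytic problem, so that the analytic uniqueness translates into geometric uniqueness in the sense of Definitions~\ref{geometricsolution} and~\ref{geometricsolution1}. This amounts to constructing a time--dependent family of tangential diffeomorphisms relating two geometric solutions and verifying it solves a well--posed (ODE--type) reparametrisation problem, thereby completing the proof.
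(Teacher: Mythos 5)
Your overall strategy coincides with the paper's: reformulate the flow with a prescribed tangential velocity, linearise about the initial datum, solve the linear system in parabolic H\"older spaces via Solonnikov's theory after checking the Lopatinskii--Shapiro condition, run a contraction argument, and finally transfer existence and uniqueness back to the geometric setting. However, there are two genuine gaps.

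First, you assert that the natural conditions coming from the first variation ``complete the system''. They do not, and recognising this is a main point of the paper. At a triple junction of the $C^0$--flow there are six scalar unknowns and a fourth order operator, so Solonnikov's theory requires $2\cdot 6=12$ scalar boundary conditions at each junction; concurrency gives $4$, the curvature condition $k^i=0$ gives $3$, and the third order condition gives $2$ --- only $9$. (There is no natural ``fourth--order'' boundary condition at all.) The missing three conditions reflect the tangential degree of freedom of each curve at the moving junction, and one must \emph{choose} one additional condition per curve in such a way that the geometric problem is not altered. The paper imposes the tangential second order condition $\langle\gamma^i_{xx},\tau^i\rangle=0$ (which together with $k^i=0$ gives $\gamma^i_{xx}=0$), and the verification of the Lopatinskii--Shapiro condition in Lemma~\ref{LopatinskiiShapiro} hinges on exactly this choice: the boundary terms produced by the integration by parts vanish because of it. Without identifying this extra condition, the linearised problem is underdetermined and Solonnikov's theorem cannot even be invoked. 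Two smaller inaccuracies in the same step: since the spatial domain is an interval, there is no tangential Fourier variable $\xi$; the Lopatinskii--Shapiro problem is an ODE system on $[0,\infty)$ parametrised only by $\lambda$ with $\Re(\lambda)>0$. Moreover the paper verifies it by an energy (integration by parts) argument rather than by computing a boundary--symbol determinant; the degeneracy you anticipate corresponds precisely to $\mathrm{span}\{\nu^1_0,\nu^2_0,\nu^3_0\}\neq\mathbb{R}^2$, and it is excluded by the definition of geometric admissibility (at least two curves forming a strictly positive angle) rather than handled inside the proof.

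Second, the final step is not an ``ODE--type reparametrisation problem''. Because the velocity involves fourth order spatial derivatives, the time dependent reparametrisations $\psi$ relating two geometric solutions must solve a quasilinear \emph{fourth order parabolic} initial boundary value problem, namely
$\psi_t+2\psi_{xxxx}/\bigl(\vert\gamma_x(t,\psi)\vert^4\vert\psi_x\vert^4\bigr)+g=0$
with boundary conditions $\psi(t,y)=0$ and $\psi_{xx}(t,y)=-\gamma_{xx}(t,y)\psi_x^2(t,y)/\gamma_x(t,y)$, following the approach of~\cite{garckenov} used in Theorem~\ref{geomexistence}. Its solvability again requires the full machinery (linearisation, Lopatinskii--Shapiro condition, contraction); an ODE argument in time would fail, since $\psi_t$ is coupled to $\psi_{xxxx}$. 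So this step has the same analytic difficulty as the existence part and cannot be dispatched as a routine afterthought.
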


We give a detailed proof in the case of the $C^0$--flow for a Theta--network. 
In Section~\ref{C1flow} we show how to adapt the proof to Triods moving
according to the $C^1$--flow. Combining the techniques used to prove the 
two results we indeed cover all possible situations.

\medskip

To prove existence of the $C^0$--flow we translate the geometric problem to the level of parame--trisations. 
As roughly explained before the notion of $L^2$--gradient flow gives
rise to a boundary value problem for the parametrisations. As the motion
in tangential direction is not specified in the formulation of this
geometric problem, one needs to fix a tangential velocity in order to
obtain a well posed parabolic PDE for the parametrisation. Moreover  we have one tangential degree of freedom for each curve at each triple junction which forces us to fix another boundary condition. This has to be chosen in such a way that the geometric problem does not change! It turns out that it is convenient to demand
$$
\langle \gamma^i_{xx},\tau^i\rangle =0
$$
at both triple junctions and at every time
which we call tangential second order condition. 
A 
key point is to ensure that solving the analytic problem is enough to obtain a unique solution to the original problem formulated for networks as geometric objects. This is shown in Theorem~\ref{geomexistence} following the approach presented in~\cite{garckenov}.
This justifies to concentrate on the analytic problem which is a matter 
of solving the system~\eqref{thetaC0} of parabolic quasilinear PDEs of fourth order with coupled boundary conditions. 
To this end we study a linearised problem and deduce existence of the system from the linearised problem by a contraction argument. Our linearised problem~\eqref{lyntheta} is derived in~\ref{linearisation} and solved in~\ref{Solo}. 
As the boundary conditions are of different order, the theory of Solonnikov~\cite{solonnikov2} is the appropriate one to show existence of the linearised problem in H\"{o}lder spaces. The system is parabolic in the sense of Solonnikov. To apply~\cite[Theorem 4.9]{solonnikov2} we notice that   
the Lopatinskii--Shapiro condition is satisfied
in the case one completes the under determined boundary value problem by the tangential second order condition.
We conclude the proof 
in Section~\ref{contractionargu} giving the contraction argument.

\medskip

The first short time existence result for the elastic flow 
of curves was obtained by Polden~\cite{poldenthesis, polden}. The author
considered closed curves in the plane and also showed global existence. 
Afterwards the result was extended to curves in $\mathbb{R}^n$ in~\cite{dziukkuwertsch},
where also global existence is proved. For other results in a similar framework 
we also refer to~\cite{koiso, langersinger, wen}. More recently the elastic flow of open curves in $\mathbb{R}^n$ has been studied. 
Especially the long time behaviour has been investigated for various boundary conditions~\cite{lin2, dalpoz,  dapozspe, lin1, okabe}. In~\cite{adrian}
short time existence of the elastic flow of open clamped curves is shown.

We emphasize that in the case of networks we are concerned with a system of equations with coupled boundary conditions. Moreover the triple junction is allowed to move which gives rise to tangential degrees of freedom. One carefully has to study the problem in order to choose meaningful extra conditions at the boundary without changing the geometric flow.

\medskip

We notice that all results are valid not only for $\mu>0$ but also in the case $\mu\in\mathbb{R}$.  This is due to the fact that the linearisation and the existence of the linearised system only depend on the highest order terms. As a consequence our result covers short time existence of the Willmore flow of networks which corresponds to $\mu=0$.

\medskip

In all cases presented in the literature on the elastic flow of closed and  open curves the evolution exists globally in time~\cite{lin2, dalpoz, dapozspe, dziukkuwertsch, lin1, novok, poldenthesis}.
In the case of networks instead
recent results on the related minimization problem of the energy $E_\mu$
suggest different possible behaviours in the long term of our flow. 
There exist minimizing sequences of Theta--networks that converge to a point. Thus, in order to have a well--posed problem in the class of Theta--networks, one needs a further constraint.
One good option among several others is to
prescribe the angles at the triple junctions with at least one positive angle.
Recently it has been shown that there exists a minimizer in the class of Theta--networks in the case that all angles are prescribed to be of $120$ degrees~\cite{danovplu}.
To prove this one has to enlarge the class of Theta--networks fulfilling the angle condition to degenerate Theta--networks. A degenerate Theta--network consists of only two curves meeting in one quadruple point forming angles in pairs of $60$ and $120$ degrees. It is shown in~\cite{danovplu} that the global minimizer is not degenerate. But it is not excluded that a degenerate network could be a stationary point.
 In contrast to the static problem both the $C^0$- and the $C^1$--flow are interesting in the evolution. In particular, the ill--posedness of the minimization problem in the $C^0$--case suggests the possible onset of singularities in the long time behaviour. A dramatic change of topology is possible as one or more curves of the network may collapse to a point.

\medskip

In \cite{bargarnu} weak formulations for the Willmore flow of networks
have been proposed to study finite element
methods for the flow. 
Applications
to evolving non--linear splines are discussed 
and several numerical studies
demonstrate interesting features of the flow.

\medskip

In the following section we formulate the geometric evolution problem
for
elastic networks in a precise way. In Section 3 we show short time existence of a
unique smooth solution in the $C^0$--case using a maximal regularity result for linear parabolic
systems due to Solonnikov and a contraction argument. A main ingredient
in the proof is to show the Lopatinskii--Shapiro condition 
facing the difficulty that the boundary conditions mix the unknowns in a non-trivial way.
We also show geometric uniqueness in the $C^0$--case. In the final Section 4 we adapt the result to
the $C^1$--flow.

\section{The elastic flow for networks}

\subsection{Notation and definitions}

\subsubsection{Networks}
\begin{dfnz}
A network $\mathcal{N}$ is a connected set in the plane consisting of a finite 
union of sufficiently smooth curves $\mathcal{N}^i$.
\end{dfnz}

\medskip

Each curve $\mathcal{N}^i$ of the network
admits a parametrisation
$\gamma^i:[0,1]\to\mathbb{R}^2$.

\begin{itemize}
\item Let $k\in\mathbb{N}$ and $\alpha\in(0,1)$. A curve $\mathcal{N}^i$ is of class $C^{k}$ or $C^{k+\alpha}$ if it admits a parametrisation
$\gamma^i$ such that $\gamma^i$ is of class $C^k$ or $C^{k+\alpha}$, respectively. 
It 
is said to be regular if
$\gamma^i_x(x)\neq 0$ for every $x\in[0,1]$.
\item For a curve of class $C^1$
we define its unit tangent vector
$\tau^i:=\gamma^i_x / \vert \gamma^i_x\vert$ and its unit normal vector $\nu^i$
as the anticlockwise rotation by $\pi/2$ of its unit tangent vector.
\end{itemize}

We  write 
$\mathcal{N}=\bigcup_{i=1}^n\mathcal{N}^i$ when we consider a network 
as a geometric object. 
Instead  when we consider a parametrisation
of $\mathcal{N}$, we write 
$\gamma=(\gamma^1,\ldots,\gamma^n)$.

\medskip

In this paper we consider only $3$--networks, namely networks 
composed by three curves. In particular we define Triods and Theta--networks
as follows:
\begin{dfnz}
Consider an open   set $\Omega\subset\mathbb{R}^2$.
A Triod $\mathbb{T}$ is a network in $\Omega$ composed by three regular
at least $C^1$--curves with one endpoint belonging to the boundary of  $\Omega$
and the other three endpoints meeting 
in a triple junction $O$.
\end{dfnz}

We call $P^1, P^2, P^3$ the endpoints at $\partial\Omega$ and 
without loss of generality we consider a parame- trisation 
$\gamma=(\gamma^1,\gamma^2,\gamma^3)$
of the Triod such that
$O=\gamma^{1}\left(0\right)=\gamma^{2}\left(0\right)=\gamma^{3}\left(0\right)$
and  $P^{i}=\gamma^{i}\left(1\right)$ .

\begin{dfnz}
A Theta--network $\Theta$ is a network in $\mathbb{R}^2$ composed by three regular
at least $C^1$--curves that intersect each other  at their endpoints 
in two triple junctions. 
\end{dfnz}

Also in this case we may choose a 
parametrisation 
$\gamma=(\gamma^1,\gamma^2,\gamma^3)$ of a Theta--network in such a way that
$O^1=\gamma^{1}\left(0\right)=\gamma^{2}\left(0\right)=\gamma^{3}\left(0\right)$ 
and $O^2=\gamma^{1}\left(1\right)=\gamma^{2}\left(1\right)=\gamma^{3}\left(1\right)$.

We have defined two types of networks whose endpoints meet in junctions and we now give
a name to this property.

\begin{dfnz}[Concurrency condition]
Given either a Triod or a Theta--network
we say that the 
\emph{concurrency condition} is satisfied 
in a point $O$ of the network if the three curves meet at $O$
in a triple junction.
\end{dfnz}

\begin{dfnz}[Angle condition]
Given either a Triod or a Theta--network,
we say that the curves satisfy 
the \emph{angle condition}
at a junction if 
the concurrency condition is satisfied and
the angles between the unit tangent vectors to the 
curves are of $120$ degrees,
namely
$\sum_{i=1}^3\tau^i=0$. 
\end{dfnz}

To be consistent with~\cite{bargarnu} we call a $C^0$--network and a
$C^1$--network 
a network that satisfies the concurrency and the angle condition, respectively. In particular a Theta--network will also be called $C^0$--Theta--Network.

\subsubsection{Function spaces}\label{parabolicspaces}

In the whole paper we consider
parabolic H\"{o}lder spaces (see also~\cite[\S 11, \S 13]{solonnikov2})
which are defined as follows.ù

For a function $u:[0,T]\times [0,1]\to\mathbb{R}$ we define for $\rho\in (0,1)$ the semi--norms
$$
[ u]_{\rho,0}:=\sup_{(t,x), (\tau,x)}\frac{\vert u(t,x)-u(\tau,x)\vert}{\vert t-\tau\vert^\rho}\,,
$$
and
$$
[ u]_{0,\rho}:=\sup_{(t,x), (t,y)}\frac{\vert u(t,x)-u(t,y)\vert}{\vert x-y\vert^\rho}\,.
$$
Then we define for $k\in \{0,1,2,3,4\}$, $\alpha\in (0,1)$ the classical parabolic H\"older spaces
$$
C^{\frac{k+\alpha}{4}, k+\alpha}([0,T]\times[0,1])
$$
of all functions $u:[0,T]\times [0,1]\to\mathbb{R}$ that have continuous derivatives $\partial_t^i\partial_x^ju$ where $i,j\in\mathbb{N}$ are such that $4i+j\leq k$ for which the norm
\begin{equation}
\left\lVert u\right\rVert_{C^{\frac{k+\alpha}{4},k+\alpha}}:=\sum_{4i+j=0}^k\left\lVert\partial_t^i\partial_x^ju\right\rVert_\infty
+\sum_{4i+j=k}\left[\partial_t^i\partial_x^ju\right]_{0,\alpha}+\sum_{0<k+\alpha-4i-j<4}\left[\partial_t^i\partial_x^ju\right]_{\frac{k+\alpha-4i-j}{4},0}
\end{equation}
is finite.

We notice that the space $C^{\frac{\alpha}{4},\alpha}\left([0,T]\times[0,1]\right)$ is equal to the space
$$
C^{\frac{\alpha}{4}}\left([0,T];C^0([0,1])\right)\cap C^0\left([0,T];C^\alpha([0,1])\right)
$$
with equivalent norms.

The boundary data are in spaces of the form
$C^{\frac{k+\alpha}{4}, k+\alpha}([0,T]\times\{0,1\}, \mathbb{R}^m)$
which we identify with 
$C^{\frac{k+\alpha}{4}}([0,T], \mathbb{R}^{2m})$
via the isomorphism
$f\mapsto (f(t,0),f(t,1))^t$.

For $\beta>0$ and $T>0$ we define
$$
C_0^\beta([0,T];\mathbb{R}^n)=
\{f\in C^\beta([0,T];\mathbb{R}^n)\;\text{such that}\;f(0)=0\}\,.
$$

\begin{lemma}\label{notprovedbis}
For $\beta>\alpha>0$ and $T\in[0,1]$
we have the embedding 
$$C_0^\beta([0,T];\mathbb{R}^n)\hookrightarrow C_0^\alpha([0,T];\mathbb{R}^n)\,.$$
Moreover for all $f\in C_0^\beta([0,T];\mathbb{R}^n)$ it holds that
$$
\Vert f\Vert_{C^{\alpha}}\leq 2T^{\beta-\alpha} \Vert f\Vert_{C^{\beta}}\,.
$$
\end{lemma}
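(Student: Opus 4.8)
The plan is to prove both claims directly from the definition of the H\"older seminorm, using the vanishing of $f$ at $t=0$ to trade regularity for a power of $T$. The embedding itself is essentially immediate once the norm estimate is in hand, so the real content is the inequality
$$
\Vert f\Vert_{C^{\alpha}}\leq 2T^{\beta-\alpha}\Vert f\Vert_{C^{\beta}}\,.
$$
Recall that $\Vert f\Vert_{C^{\alpha}}=\Vert f\Vert_\infty+[f]_\alpha$, and I would bound the two summands separately, each by $T^{\beta-\alpha}\Vert f\Vert_{C^\beta}$.

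First I would handle the sup-norm term. Since $f(0)=0$, for any $t\in[0,T]$ we can write $\vert f(t)\vert=\vert f(t)-f(0)\vert\leq [f]_\beta\,\vert t\vert^{\beta}\leq [f]_\beta\,T^{\beta}$. Because $T\leq 1$ and $\beta>\alpha$ we have $T^{\beta}\leq T^{\beta-\alpha}$ (as $T^{\alpha}\leq 1$), so $\vert f(t)\vert\leq T^{\beta-\alpha}[f]_\beta\leq T^{\beta-\alpha}\Vert f\Vert_{C^\beta}$; taking the supremum over $t$ gives $\Vert f\Vert_\infty\leq T^{\beta-\alpha}\Vert f\Vert_{C^\beta}$. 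Second, for the $\alpha$--seminorm I would compare the difference quotient at exponent $\alpha$ with the one at exponent $\beta$: for $t\neq\tau$,
$$
\frac{\vert f(t)-f(\tau)\vert}{\vert t-\tau\vert^{\alpha}}
=\frac{\vert f(t)-f(\tau)\vert}{\vert t-\tau\vert^{\beta}}\,\vert t-\tau\vert^{\beta-\alpha}
\leq [f]_\beta\,\vert t-\tau\vert^{\beta-\alpha}\leq [f]_\beta\,T^{\beta-\alpha}\,,
$$
since $\vert t-\tau\vert\leq T$ and the exponent $\beta-\alpha$ is positive. Taking the supremum yields $[f]_\alpha\leq T^{\beta-\alpha}[f]_\beta\leq T^{\beta-\alpha}\Vert f\Vert_{C^\beta}$. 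Adding the two bounds gives the factor $2$ and the claimed estimate, and in particular $f\in C^\alpha$ with $f(0)=0$, i.e. $f\in C_0^\alpha$, which is exactly the asserted embedding.

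There is no serious obstacle here: the argument is a one-line interpolation-type estimate, and the only mildly delicate point is keeping track of the hypothesis $T\leq 1$, which is what makes $T^\beta\leq T^{\beta-\alpha}$ hold and is needed to bound the sup-norm term — without it the power of $T$ in that term would be $\beta$ rather than $\beta-\alpha$. Everything is componentwise in $\mathbb{R}^n$, so the vector-valued case follows from the scalar estimates by applying them to $\vert f(t)-f(\tau)\vert$ with the Euclidean norm, with no change in the constants.
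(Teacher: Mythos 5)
Your proof is correct and is exactly the argument the paper has in mind: the paper's own proof is the one-line remark that the claim ``follows directly from the fact that $f(0)=0$ and the definition of the H\"older norm,'' and your two estimates (the sup-norm bound via $|f(t)-f(0)|\leq [f]_\beta T^\beta\leq [f]_\beta T^{\beta-\alpha}$ using $T\leq 1$, and the seminorm bound $[f]_\alpha\leq T^{\beta-\alpha}[f]_\beta$) are precisely the details being alluded to. Nothing further is needed.
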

\begin{proof}
This follows directly from the fact that $f(0)=0$ and the definition of the H\"{o}lder norm.
\end{proof}

\subsection{Definition of the flow}

We consider the elastic energy functional $E_\mu$
for a $3$--network $\mathcal{N}$ defined as

\begin{equation}\label{eef}
E_\mu\left(\mathcal{N}\right):=\int_{\mathcal{N}}\left(k^{2}+\mu\right)ds
=\sum_{i=1}^3\int_{\mathcal{N}^{i}}\left((k^i)^{2}+\mu\right)ds^i\,,
\end{equation}
with $\mu\in (0,\infty)$, where $k^i$ is the curvature
of the curve $\mathcal{N}^i$
and $s^i$ its arclength parameter.
Here we use a sign convention for the curvature such that  $\tau_s=k\nu$.

\medskip

Our aim is to study the $L^2$--gradient flow of $E_\mu$
for a Triod or a Theta--network
in the case we require at the triple junctions the
concurrency or the angle conditions.

\begin{prob}[$C^0$--Flow]\label{geometricproblem}
Either a Triod or a Theta--network evolves by the geometric $L^2$--gradient flow 
of the elastic energy functional~\eqref{eef} requiring that 
the concurrency condition is satisfied at the triple junction(s).
\end{prob}
\begin{prob}[$C^1$--Flow]\label{generalgeometricproblemII}
A Triod or a Theta--network evolves by the geometric $L^2$--gradient flow 
of the elastic energy functional~\eqref{eef} requiring that 
 the angle condition is satisfied at the triple junction(s).
\end{prob}

In both problems the triple junctions are allowed to move.
We are looking for classical solutions to both Problem~\ref{geometricproblem} 
and Problem~\ref{generalgeometricproblemII}.

\medskip
We compute
the first variation of the functional $E_\mu$ for a  Triod or a Theta--network 
$\mathcal{N}$ admitting a regular $C^{4+\alpha}$--parametrisation $\gamma=(\gamma^1,\gamma^2,\gamma^3)$, compare \cite{bargarnu,   dallapluda} for details.

We get
\begin{align}\label{variation}
 \frac{d}{dt}E_\mu(\widetilde{\mathcal{N}}_t)_{|t=0}=&
 \sum_{i=1}^{3}\int_{\gamma^i}\left(2k_{ss}^{i}+\left(k^{i}\right)^{3}
 -\mu k^{i}\right)\left\langle \psi^{i},\nu^{i}\right\rangle\,ds\\
  & +\left.\sum_{i=1}^{3}2\left\langle \psi_{s}^{i},k^{i}\nu^i\right\rangle \right|_{0}^{1}
 +\left.\sum_{i=1}^{3}\left\langle \psi^{i},-2k_{s}^{i}\nu^i-\left( k^{i}\right)^{2}\tau^{i}
 +\mu\tau^{i}\right\rangle \right|_{0}^{1}\,,\nonumber
\end{align}
where $\widetilde{\mathcal{N}}_t$ is a variation of the network  $\mathcal{N}$
parametrised by $\tilde{\gamma}=(\tilde{\gamma}^1,\tilde{\gamma}^2,\tilde{\gamma}^3)$
with $\tilde{\gamma}^i=\gamma^i+t\psi^i$, $t\in\mathbb{R}$, $\psi^i\in 
C^\infty([0,1],\mathbb{R}^2)$ and $\psi^1(0)=\psi^2(0)=\psi^3(0)$.
In the case of a Theta--network we also require
$\psi^1(1)=\psi^2(1)=\psi^3(1)$ whereas in the case of a Triod we
prescribe  $\psi^i(1)=0$, $i=1,2,3$, in the Navier case and 
$\psi^i(1)=0$, $\psi^i_x(1)=0$, $i=1,2,3$ in the clamped case.
In the $C^1$--case there is a further condition on $\psi$ at the triple junction(s), see~\cite{bargarnu,   dallapluda} for details.

The previous computation tells us that the steepest descent with respect to the 
$L^2$--inner product  is given by a
normal velocity for each curve $\gamma^i$ equal to
$$
-\left(2k_{ss}^{i}+\left(k^{i}\right)^{3}-\mu k^{i}\right)\nu^{i}=:-A^i\nu^i\,.
$$

\medskip

Computing this quantity in terms of the parametrisation $\gamma^i$ we get
\begin{align}\label{Apart}
&A^i\nu^i=2k_{ss}^{i}\nu^{i}+\left(k^{i}\right)^{3}\nu^{i}-\mu k^{i}\nu^{i}\nonumber \\
&=2\frac{\gamma_{xxxx}^{i}}{\left|\gamma_{x}^{i}\right|^{4}}
-12\frac{\gamma^i_{xxx}\left\langle \gamma^i_{xx},\gamma^i_{x}\right\rangle }{\left|\gamma^i_{x}\right|^{6}}
-5\frac{\gamma^i_{xx}\left|\gamma^i_{xx}\right|^{2}}{\left|\gamma^i_{x}\right|^{6}}
-8\frac{\gamma_{xx}^{i}\left\langle \gamma_{xxx}^{i},\gamma_{x}^{i}\right\rangle }
{\left|\gamma_{x}^{i}\right|^{6}}
+35\frac{\gamma_{xx}^{i}\left\langle \gamma_{xx}^{i},\gamma_{x}^{i}\right\rangle ^{2}}
{\left|\gamma_{x}^{i}\right|^{8}}\nonumber\\
&+\left\langle -2\frac{\gamma_{xxxx}^{i}}{\left|\gamma_{x}^{i}\right|^{4}}+12\frac{\gamma_{xxx}^{i}\left\langle 
\gamma_{xx}^{i},\gamma_{x}^{i}\right\rangle }{\left|\gamma_{x}^{i}\right|^{6}}+5\frac{\gamma_{xx}^{i}\left|
\gamma_{xx}^{i}\right|^{2}}{\left|\gamma_{x}^{i}\right|^{6}}+8\frac{\gamma_{xx}^{i}\left\langle 
\gamma_{xxx}^{i},\gamma_{x}^{i}\right\rangle }{\left|\gamma_{x}^{i}\right|
^{6}}-35\frac{\gamma_{xx}^{i}\left\langle \gamma_{xx}^{i},\gamma_{x}^{i}\right\rangle ^{2}}{\left|\gamma_{x}^{i}\right|^{8}},\tau^{i}\right\rangle 
\tau^{i}\nonumber\\
&-\mu\frac{\gamma_{xx}^{i}}{\left|\gamma_{x}^{i}\right|^{2}}
+\left\langle \mu\frac{\gamma_{xx}^{i}}{\left|\gamma_{x}^{i}\right|^{2}},
\tau^{i}\right\rangle \tau^{i}\,.
\end{align}

\section{$C^0$--Flow}

Now we consider the $C^0$--Flow for a Theta--network.

From now on we
consider an initial $C^0$--Theta network $\Theta_0=\bigcup_{i=1}^3\sigma^i$ 
and we denote with $\tau^i_0,\nu^i_0, k_0$
its unit tangent, unit normal vectors and
its  curvature, respectively. 
Moreover we call  $\alpha_3,\alpha_1$ and $\alpha_2$ the angle between $\tau^1_0$ and  
 $\tau^2_0$, $\tau^2_0$ and  $\tau^3_0$, and $\tau^3_0$ and $\tau^1_0$, respectively.
Then basic trigonometric relations imply that the angles and tangents satisfy
\begin{align*}
\sin(\alpha_1)\tau^1_0+\sin(\alpha_2)\tau^2_0+\sin(\alpha_3)\tau^3_0=0\,,\\
\sin(\alpha_1)\nu^1_0+\sin(\alpha_2)\nu^2_0+\sin(\alpha_3)\nu^3_0=0\,.
\end{align*}

\subsection{Geometric problem}

To obtain an $L^2$--gradient flow of the functional $E_\mu$ for
a $C^0$--Theta--network
the following two conditions
need to be fulfilled at the boundary
(which are derived  from~\eqref{variation}, 
for a detailed proof see~\cite{bargarnu}) : 
\begin{equation}\label{boundaryvariation}
\left.\sum_{i=1}^{3}2\left\langle \psi_{s}^{i},k^{i}\nu^i\right\rangle \right|_{0}^{1}=0\quad\text{and}\quad
\left.\left\langle \psi^1,\sum_{i=1}^{3}-2k_{s}^{i}\nu^i-\left( k^{i}\right)^{2}\tau^{i}
 +\mu\tau^{i}\right\rangle \right|_{0}^{1}=0\,
\end{equation}
for all $\psi^i\in C^\infty([0,1],\mathbb{R}^2)$ with 
$\psi^1(y)=\psi^2(y)=\psi^3(y)$ for $y\in\{0,1\}$.

These conditions lead to natural boundary conditions which we now specify.

\begin{dfnz}[Natural boundary conditions]
We say that a Theta--network satisfies 
\begin{itemize}
\item  the
\emph{curvature condition} if
$k^i=0$ holds at the triple junctions;
\item the
\emph{third order condition} if $\sum_{i=1}^{3}\left(2k_{s}^{i}\nu^{i}-\mu\tau^{i}\right)=0$ 
holds at the triple junctions. 
\end{itemize}
\end{dfnz}

We notice that~\eqref{boundaryvariation}
is satisfied if and only if 
the curvature and the third order conditions are satisfied.
We now specify geometrically admissible initial networks which are needed
to solve the evolution problem in the class of smooth parametrisations.
In particular we need to specify compatibility conditions between
initial and boundary data.


\begin{dfnz}[Geometrically admissible initial network]\label{admg}
A Theta--network $\Theta_0$
is a 
geometrically admissible initial network
for Problem~\ref{geometricproblem}
 if
\begin{itemize}
\item at each triple junction there are at least two curves that form a strictly positive angle;
\item at each triple junction each curve has zero curvature (curvature condition);
\item at each triple junction it holds 
$\sum_{i=1}^{3}\left(2k_{0,s}^{i}\nu_0^{i}-\mu\tau_0^{i}\right)=0$
(third order condition);
\item  at each triple junction it holds that 
$$
\sin(\alpha_1) A_0^1+\sin(\alpha_2) A_0^2+\sin(\alpha_3) A_0^3=0\,,
$$
where $A^i_0=2k_{0,ss}^{i}+\left(k_0^{i}\right)^{3}-\mu k_0^{i}$.
\item there exists a parametrisation $\gamma=(\gamma^1,\gamma^2,\gamma^3)$
of  $\Theta_0$ such that every curve
$\gamma^i:[0,1]\to\mathbb{R}^2$ is of class $C^{4+\alpha}$
and regular.
\end{itemize}
\end{dfnz}

\begin{dfnz}[Solution of the geometric problem]\label{geometricsolution}
Let $\Theta_0$ be a geometrically admissible  initial network.
A time dependent family of Theta--networks $\Theta(t)$
 is a solution to
the $C^0$--flow with initial data $\Theta_0$ 
in a time interval $[0,T]$ with $T>0$
if and only if there exist
parametrisations $\gamma(t)$ of $\Theta(t)$, $t\in[0,T]$, 
$\gamma^i\in C^{\frac{4+\alpha}{4}, 4+\alpha}([0,T]\times[0,1],\mathbb{R}^2)\,,$
such that 
for every  $t\in\left[0,T\right),x\in\left[0,1\right]$ and for $i\in\{1,2,3\}$
the system
\begin{equation}\label{Theta0}
\begin{cases}
\begin{array}{lll}
\left\langle \gamma_t^{i}(t,x), \nu^i(t,x)\right\rangle
\nu^i(t,x) =-A^{i}(t,x)\nu^{i}(t,x)& &\text{motion,}\\
\gamma^{1}\left(t,y\right)=\gamma^{2}\left(t,y\right)=\gamma^{3}\left(t,y\right)&\text{for}\,y\in\{0,1\} 
&\text{concurrency condition,}\\
k^{i}(t,y)=0 &\text{for}\,y\in\{0,1\} &\text{curvature condition,}\\
\sum_{i=1}^{3}\left(2k_{s}^{i}\nu^{i}-\mu\tau^{i}\right)(t,y)=0 &\text{for}\,y\in\{0,1\}  &
\text{third order condition,}\\
\Theta(0)=\Theta_0& &
\text{initial data}
\end{array}
\end{cases}
\end{equation}
is satisfied.
\end{dfnz}

To specify the 
geometric evolution of the network it is enough to write the normal velocity of 
the curves. 
We remind that in our evolution problem the triple junctions are allowed to move which forces the curves to move also in tangential direction.
\begin{prop}
	The system~\eqref{Theta0}
	is a $L^2$-gradient flow of $E_\mu$ and in particular
	$E_\mu$ decreases in time.
\end{prop}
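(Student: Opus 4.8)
The plan is to show that the system~\eqref{Theta0} is the $L^2$--gradient flow of $E_\mu$ by verifying that the prescribed normal velocity, together with the imposed boundary conditions, makes the first variation formula~\eqref{variation} reduce to a non-positive quantity along the flow. The essential observation is that the energy $E_\mu$ is a purely geometric quantity, so its time derivative depends only on the normal component of the velocity; tangential motion of the parametrisation (which is present because the junctions are allowed to move) does not affect $E_\mu$. Hence I would begin by recalling that $\frac{d}{dt}E_\mu(\Theta(t))$ is obtained from~\eqref{variation} with the role of $\psi^i$ played by the velocity field $\gamma_t^i$, where $\gamma$ is a parametrisation as in Definition~\ref{geometricsolution}.

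First I would substitute the motion equation $\langle\gamma_t^i,\nu^i\rangle\nu^i=-A^i\nu^i$ into the bulk term of~\eqref{variation}. Since $\langle\gamma_t^i,\nu^i\rangle=-A^i$ and $A^i=2k_{ss}^i+(k^i)^3-\mu k^i$, the bulk integral becomes
\begin{equation}
\sum_{i=1}^3\int_{\gamma^i}A^i\langle\gamma_t^i,\nu^i\rangle\,ds
=-\sum_{i=1}^3\int_{\gamma^i}\bigl(A^i\bigr)^2\,ds\le 0\,,
\end{equation}
which already carries the desired sign. The remaining work is to show that the two boundary terms in~\eqref{variation} vanish. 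Here I would invoke that the velocity $\gamma_t$ inherits the structural constraints of the admissible variations $\psi^i$: differentiating the concurrency condition $\gamma^1(t,y)=\gamma^2(t,y)=\gamma^3(t,y)$ in time at $y\in\{0,1\}$ gives $\gamma_t^1=\gamma_t^2=\gamma_t^3$ at the junctions, which is exactly the admissibility requirement on $\psi^i$ in~\eqref{boundaryvariation}. Consequently the two boundary expressions in~\eqref{variation} are precisely the left--hand sides of the two identities in~\eqref{boundaryvariation}, evaluated with $\psi^i=\gamma_t^i$.

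It remains to check that these boundary terms indeed vanish, and this is where the two natural boundary conditions enter. The curvature condition $k^i=0$ at the junctions kills the first boundary term $\sum_i 2\langle\psi_s^i,k^i\nu^i\rangle|_0^1$ directly, since each $k^i\nu^i$ vanishes there. For the second boundary term, using $\gamma_t^1=\gamma_t^2=\gamma_t^3=:\psi$ at the junctions, the three summands combine into $\langle\psi,\sum_{i=1}^3(-2k_s^i\nu^i-(k^i)^2\tau^i+\mu\tau^i)\rangle$; after applying $k^i=0$ again to drop the $(k^i)^2\tau^i$ terms, what remains is $-\langle\psi,\sum_{i=1}^3(2k_s^i\nu^i-\mu\tau^i)\rangle$, which vanishes by the third order condition $\sum_{i=1}^3(2k_s^i\nu^i-\mu\tau^i)=0$. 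Combining all three pieces yields $\frac{d}{dt}E_\mu(\Theta(t))=-\sum_i\int_{\gamma^i}(A^i)^2\,ds\le 0$, establishing both that the flow decreases $E_\mu$ and that it realises the $L^2$--gradient flow.

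I expect the only genuinely delicate point to be the bookkeeping at the boundary: one must be careful that the evaluation $|_0^1$ together with the two distinct junctions $O^1$ (at $y=0$) and $O^2$ (at $y=1$) is handled correctly, and that the concurrency condition is differentiated at \emph{both} endpoints so that $\gamma_t^i$ genuinely agrees across curves at each junction. Everything else is a direct substitution of the boundary conditions from~\eqref{Theta0} into~\eqref{variation}; the sign and the gradient--flow structure then follow immediately from the non-negativity of $\sum_i\int (A^i)^2\,ds$.
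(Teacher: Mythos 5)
Your proof is correct, but it takes a genuinely different route from the paper: the paper disposes of this proposition in one line, citing~\cite[Theorem~2.1]{bargarnu}, whereas you give a self-contained verification by substituting the velocity field into the first variation formula~\eqref{variation}. Your computation is the right one, and it is exactly the argument the citation encapsulates: the motion equation gives $\langle\gamma_t^i,\nu^i\rangle=-A^i$, so the bulk term becomes $-\sum_i\int(A^i)^2\,ds\le 0$; differentiating the concurrency condition in time shows $\gamma_t^1=\gamma_t^2=\gamma_t^3$ at both junctions, so $\gamma_t$ is an admissible variation field in the sense required for~\eqref{variation} and~\eqref{boundaryvariation}; the curvature condition kills the first boundary term and, combined with the third order condition, the second one. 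What your approach buys is transparency: it makes visible precisely which boundary conditions of~\eqref{Theta0} are responsible for the energy decay, which is the conceptual point behind the natural boundary conditions; the cost is re-deriving what the cited theorem already contains. Two points deserve to be made explicit if this is written out. First, \eqref{variation} is stated for linear variations $\gamma^i+t\psi^i$ with $\psi^i$ smooth and time-independent; applying it along the flow uses the (standard, but unstated) chain-rule identity that $\frac{d}{dt}E_\mu(\Theta(t))$ equals the first variation at $\gamma(t)$ in the direction $\gamma_t(t,\cdot)$. Second, for a solution that is only $C^{\frac{4+\alpha}{4},4+\alpha}$ the field $\gamma_t(t,\cdot)$ is merely $C^\alpha$ in space, while the boundary term $\langle\psi_s^i,k^i\nu^i\rangle$ formally involves its arclength derivative; this is harmless here because that term carries the factor $k^i\nu^i$, which vanishes at the junctions by the curvature condition, but strictly speaking $\psi^i=\gamma_t^i$ lies slightly outside the class of variations for which~\eqref{variation} was derived. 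Relatedly, your opening remark that tangential motion never affects $E_\mu$ is too strong for networks with moving junctions (tangential velocity at a junction does displace the junction); your actual computation does not rely on it, since the tangential contributions sit in the boundary terms, which you correctly annihilate with the boundary conditions rather than by appeal to geometric invariance.
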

\begin{proof}
	This follows from~\cite[Theorem~2.1]{bargarnu}.
\end{proof}
 
\subsection{Analytic problem}

In this subsection we consider a time dependent family of 
Theta--networks parametrised by regular curves
$\gamma=(\gamma^1,\gamma^2,\gamma^3)
\in C^{\frac{4+\alpha}{4},4+\alpha}([0,T]\times[0,1])$
with $T>0$ and $\alpha\in(0,1)$.

\begin{dfnz}[Tangential velocity]
We define 
\begin{align}\label{Tang}
T^{i}&:=\left\langle 2\frac{\gamma^i_{xxxx}}{\left|\gamma^i_{x}\right|^{4}}
-12\frac{\gamma^i_{xxx}\left\langle \gamma^i_{xx},\gamma^i_{x}\right\rangle }
{\left|\gamma^i_{x}\right|^{6}}
-5\frac{\gamma^i_{xx}\left|\gamma^i_{xx}\right|^{2}}{\left|\gamma^i_{x}\right|^{6}}
\right. \nonumber \\
&\left. -8\frac{\gamma^i_{xx}\left\langle \gamma^i_{xxx},\gamma^i_{x}\right\rangle }
{\left|\gamma^i_{x}\right|^{6}}
+35\frac{\gamma^i_{xx}\left\langle \gamma^i_{xx},\gamma^i_{x}\right\rangle ^{2}}
{\left|\gamma^i_{x}\right|^{8}}
-\mu\frac{\gamma_{xx}^{i}}{\left|\gamma_{x}^{i}\right|^{2}},\tau^{i}\right\rangle \,.
\end{align}
\end{dfnz}

\begin{dfnz}[Second order condition]
We say that the parametrisation $\gamma$ of a Theta--network satisfies 
the \emph{second order condition}
if each curve satisfies $\gamma^i_{xx}=0$ at the triple junctions.
\end{dfnz}

\begin{rem}\label{secondorder}
We want to write
 the geometric problem 
as a system of PDEs.
This is obtained describing the curves
with the help of a parametrisation $\gamma$.
To have a well posed problem
 we need to fix a tangential velocity and to impose another condition
on the parametrisation of each curve at the boundary.
This is due to the fact that we need to specify some
tangential degrees of freedom.
These conditions have to be chosen in such a way that 
the geometric problem does not change 
(see below the discussion about geometric existence and uniqueness
in Subsection~\ref{geometricstuff}). 
As it turns out the boundary condition
\begin{equation}\label{tangentialsecondorder}
\left\langle \gamma_{xx},\tau \right\rangle=0
\end{equation}
is convenient in our case  (see Lemma~\ref{LopatinskiiShapiro}).
Together with the curvature condition the condition~\eqref{tangentialsecondorder} 
is equivalent to the second
order condition.
\end{rem}

Hence we consider the following problem:

\begin{prob}\label{latexproblem}
For every  $t\in\left[0,T\right),x\in\left[0,1\right]$ and for $i\in\{1,2,3\}$
\begin{equation}\label{thetaC0}
\begin{cases}
\begin{array}{lll}
\gamma_t^{i}(t,x)=-A^{i}(t,x)\nu^{i}(t,x)-T^{i}(t,x)\tau^{i}(t,x)& &\text{motion,}\\
\gamma^{1}\left(t,y\right)=\gamma^{2}\left(t,y\right)=\gamma^{3}\left(t,y\right)&\text{for}\,y\in\{0,1\} 
&\text{concurrency condition,}\\
\gamma_{xx}^{i}(t,y)=0 &\text{for}\,y\in\{0,1\} &\text{second order condition,}\\
\sum_{i=1}^{3}\left(2k_{s}^{i}\nu^{i}-\mu\tau^{i}\right)(t,y)=0 &\text{for}\,y\in\{0,1\}  &
\text{third order condition,}\\
\gamma^i(0,x)=\varphi^i(x)&  &
\text{initial data}
\end{array}
\end{cases}
\end{equation}
with $\varphi^i$ admissible initial parametrisation as defined in Definition~\ref{adm}.
\end{prob}

\begin{dfnz}[Compatibility conditions]
A parametrisation $\varphi$
of 
an initial network $\Theta_0$
satisfies
the compatibility conditions for system~\eqref{thetaC0}  if 
it satisfies the concurrency, second and third order conditions and
\begin{equation*}
A_0^{i}(y)\nu_0^{i}(y)+T_0^{i}(y)\tau_0^{i}(y)
=A_0^{j}(y)\nu_0^{j}(y)+T_0^{j}(y)\tau_0^{j}(y)
\end{equation*}
for $i,j\in\{1,2,3\}$ and $y\in\{0,1\}$
where $A^i_0$ and $T^i_0$ denote the equations~\eqref{Apart} and~\eqref{Tang}
applied to $\varphi^i$.
\end{dfnz}

\begin{dfnz}[Admissible initial parametrisation]\label{adm}
A parametrisation $\varphi$
of an initial Theta--network $\Theta_0$
is an admissible initial parametrisation for system~\eqref{thetaC0}
if 
\begin{itemize}
\item for $i\in\{1,2,3\}$ and for some $\alpha\in(0,1)$ we have $\varphi^i\in C^{4+\alpha}([0,1])$;
\item each curve $\varphi^i$ is regular;
\item for $y\in\{0,1\}$ we have that 
$\mathrm{span}\{\nu^1_0(y),\nu^2_0(y),\nu^3_0(y)\}=\mathbb{R}^2$;
\item the compatibility conditions for system~\eqref{thetaC0} are satisfied. 
\end{itemize}
\end{dfnz}

\subsection{Existence of the linearised system}

From now on we fix an initial  Theta--network $\Theta_0$
parametrised by an admissible initial parametrisation $\varphi$.

\subsubsection{Linearisation}\label{linearisation}

Consider the system defined in~\eqref{thetaC0}. 
Omitting the dependence on $(t,x)$ 
we write our motion equation in the form
\begin{equation}
\gamma^i_t=-2\frac{\gamma^i_{xxxx}}{\vert \gamma^i_x\vert^4}+\tilde{f}(\gamma^i_{xxx},\gamma^i_{xx},\gamma^i_x)
\end{equation}
for $i=1,2,3$.
We linearise the highest order terms of the previous equations around the initial parametrisation obtaining
\begin{align}
\gamma^i_t
+\frac{2}{\vert\varphi^i_x\vert^4}\gamma^i_{xxxx}
&=\left(\frac{2}{\vert\varphi^i_x\vert^4} -\frac{2}{\vert\gamma^i_x\vert^4}\right)\gamma^i_{xxxx} 
+\tilde{f}(\gamma^i_{xxx},\gamma^i_{xx},\gamma^i_x)=:f^i(\gamma^i_{xxxx},\gamma^i_{xxx},
\gamma^i_{xx},\gamma^i_{x})\,.
\end{align}
Omitting the dependence on $(t,y)$ with $y\in\{0,1\}$
we linearise the highest order terms of the boundary conditions.
The concurrency condition and the second order condition are already linear:
\begin{equation*}
\gamma^{1}=\gamma^{2}=\gamma^{3} \quad\text{and}\quad
\gamma^i_{xx}=0  \quad\text{for}\;i\in\{1,2,3\}\,.
\end{equation*}
The third order condition is of the form
\begin{equation}\label{thirdorder}
\sum_{i=1}^3\frac{1}{\vert \gamma^i_x\vert^3}
\left\langle \gamma^i_{xxx},\nu^i\right\rangle \nu^i 
+h^i(\gamma^i_x)=0\,.
\end{equation}
The linearised version of the highest order term in the third order condition~\eqref{thirdorder}
is the following:
\begin{align}
-\sum_{i=1}^3\frac{1}{\vert \varphi^i_x\vert^3}
\left\langle \gamma^i_{xxx},\nu_0^i\right\rangle \nu_0^i =
-\sum_{i=1}^3\frac{1}{\vert \varphi^i_x\vert^3}
\left\langle \gamma^i_{xxx},\nu_0^i\right\rangle \nu_0^i
+\sum_{i=1}^3\frac{1}{\vert \gamma^i_x\vert^3}
\left\langle \gamma^i_{xxx},\nu^i\right\rangle \nu^i 
+h^i(\gamma^i_x)=:b(\gamma)
\,.
\end{align}

The linearised system associated to~\ref{thetaC0} is given by
\begin{equation}\label{lyntheta}
\begin{cases}
\begin{array}{lll}
\gamma^i_t+\frac{2}{\vert\varphi^i_x\vert^4}\gamma^i_{xxxx}&=f^i
&\;\text{motion,}\\
\gamma^{1}-\gamma^{2}&=0 &\;\text{concurrency,}\\
\gamma^{1}-\gamma^{3}&=0 &\;\text{concurrency,}\\
\gamma^i_{xx}&=0 &\;\text{second order,}\\
-\sum_{i=1}^3\frac{1}{\vert \varphi^i_x\vert^3}
\left\langle \gamma^i_{xxx},\nu_0^i\right\rangle \nu_0^i&=b
&\;\text{third order,}\\
\gamma^{i}(0)&=\psi^{i} &\;\text{initial condition}\\
\end{array}
\end{cases}
\end{equation}
for $i\in\{1,2,3\}$
where we have omitted the dependence on $(t,x)\in[0,T]\times[0,1]$ in the  motion
equation, on $(t,y)\in[0,T]\times\{0,1\}$ for the boundary conditions 
and on $x\in[0,1]$ for the initial condition, respectively. Here $(f,b,\psi)$ is a general right hand side.
\begin{dfnz}\label{lincompcond}[Linear compatibility conditions]
Let $(f,b)$ be a given right hand side to the linear system~\eqref{lyntheta}. A function $\psi\in C^{4+\alpha}\left([0,1];\left(\mathbb{R}^2\right)^3\right)$ satisfies the linear compatibility conditions for the linear system~\eqref{lyntheta} with respect to $(f,b)$ if for $y\in\{0,1\}$ and $i,j\in\{1,2,3\}$ the equalities
\begin{align*}
\psi^i(y)&=\psi^j(y) \,,\\
\psi^i_{xx}(y)&=0 \,,\\
-\sum_{i=1}^3\frac{1}{\vert \varphi^i_x\vert^3}\left\langle \psi^i_{xxx}(y),\nu_0^i \right\rangle 
\nu_0^i &=b(0,y) \\
\frac{2}{\vert \varphi^i_x\vert^4}\psi^i_{xxxx}(y)-f^i(0,y)&=
\frac{2}{\vert \varphi^j_x\vert^4}\psi^j_{xxxx}(y)
-f^j(0,y) 
\end{align*}
hold.
\end{dfnz}
%
\begin{rem}
In the end we will be interested in the case $\psi=\varphi$.
\end{rem}
We show existence of the linear parabolic boundary value problem~\eqref{lyntheta} using the 
theory of Solonnikov~\cite{solonnikov2}. 

\subsubsection{Parabolic system in the sense of Solonnikov}\label{Solo}

Following the notation in~\cite{solonnikov2}
we let $\gamma^i=(u^i,v^i)$ and $\varphi^i=(\tilde{\varphi}^i, \hat{\varphi}^i)$.
As we are working with a fourth order operator we have $b=2$ and $r=6$.
We write our motion equation in the form
\begin{equation}
\begin{cases}
\begin{array}{ll}
u^i_t+\frac{2}{\vert\varphi^i_x\vert^4}u^i_{xxxx}=a^i, &\;\text{}\\
v^i_t+\frac{2}{\vert\varphi^i_x\vert^4}v^i_{xxxx}=b^i.&\;\text{}
\end{array}
\end{cases}
\end{equation}
Notice that $\gamma(t,x)$  is the vector with components $(u^1,v^1,u^2,v^2,u^3,v^3)$
and $f(t,x)$ has  components $(a^1,b^1,a^2,b^2,a^3,b^3)$.
Introducing the matrix $\mathcal{L}(x,t,\partial_x,\partial_t)$ given by
\begin{equation}
\begin{bmatrix}
\partial_t+\frac{2}{\vert \varphi^1_x\vert^4}\partial^4_x & 0 & 0 & 0 & 0 & 0 \\
0 & \partial_t+\frac{2}{\vert \varphi^1_x\vert^4}\partial^4_x  & 0 & 0 & 0 & 0 \\
0 & 0 & \partial_t+\frac{2}{\vert \varphi^2_x\vert^4}\partial^4_x  & 0 & 0 & 0 \\
0 & 0 & 0 & \partial_t+\frac{2}{\vert \varphi^2_x\vert^4}\partial^4_x  & 0 & 0 \\
0 & 0 & 0 & 0 & \partial_t+\frac{2}{\vert \varphi^3_x\vert^4}\partial^4_x  & 0 \\
0 & 0 & 0 & 0 & 0 & \partial_t+\frac{2}{\vert \varphi^3_x\vert^4}\partial^4_x   \\
\end{bmatrix}
\end{equation}
the previous system of
six equations in the unknown functions $u^1,v^1,u^2,v^2,u^3,v^3$
can be written in the form
\begin{equation}
\mathcal{L}\gamma=f\,.
\end{equation}

With the choice $s_k=4$ for every $k\in\{1,\ldots, 6\}$ and $t_j=0$ for every $j\in\{1,\ldots, 6\}$
the conditions of~\cite[page 8]{solonnikov2} are satisfied.
Following the classical theory in~\cite{solonnikov2} we call
$\mathcal{L}_0$ the principal part of the matrix 
$\mathcal{L}$.
With our choice of $s_k$ and $t_j$ we have $\mathcal{L}_0=\mathcal{L}$
and 
$$
\mathrm{det}\mathcal{L}_0(x,t,i\xi,p)=
\prod _{j=1}^3\left(  \frac{2}{\vert \varphi^j_x\vert}\xi^4+p\right) ^2\,.
$$
This is a polynomial of degree six in $p$ with roots of multiplicity two of the form 
$p_j=-\frac{2}{\vert \varphi^j_x\vert^4}\xi^4$ with $j=1,2,3$.
Choosing $\delta = \min\left\{\frac{2}{\vert \varphi^j_x\vert^4}: j=1,2,3\right\}$
the system is parabolic in the sense of 
Solonnikov (see~\cite[page 8]{solonnikov2}).

\subsubsection{Complementary conditions}
The complementary condition in~\cite[page 11]{solonnikov2} follows from
the Lopatinskii--Shapiro condition  (see~\cite[pages 11--15]{eidelman2}).
We prove that the Lopatinskii--Shapiro condition is satisfied in the
boundary  point $y=0$. The case $y=1$ can be treated analogously. 

\begin{dfnz}
Let $\lambda\in\mathbb{C}$ with $ \Re(\lambda)>0$ be arbitrary.
The Lopatinskii--Shapiro condition
demands that every solution $(\gamma^i)_{i=1,2,3}\in C^4([0,\infty),(\mathbb{C}^2)^3)$ to
the system of ODEs
\begin{equation}\label{LopatinskiiShapirosystem}
\begin{cases}
\begin{array}{llll}
\lambda \gamma^i(x)+\frac{1}{\vert\varphi^i_x
\vert^4}\gamma^i_{xxxx}(x)&=0&\;x\in[0,\infty),
i\in\{1,2,3\}&\;\text{motion,}\\
\gamma^{1}(0)-\gamma^{2}(0)&=0 &\; &\;\text{concurrency,}\\
\gamma^{2}(0)-\gamma^{3}(0)&=0 &\; &\;\text{concurrency,}\\
\gamma^i_{xx}(0)&=0 &\;i\in\{1,2,3\}&\;\text{second order,}\\
\sum_{i=1}^3\frac{1}{\vert \varphi_x^i\vert ^3}
\left\langle \gamma^i_{xxx}(0),\nu_0^i\right\rangle \nu_0^i&=0 &\; &\;\text{third order}\\
\end{array}
\end{cases}
\end{equation}
which satisfies $\lim_{x\to\infty}\lvert \gamma^i(x)\rvert=0$ has to be the trivial solution.
\end{dfnz}

In system~\eqref{LopatinskiiShapirosystem} and in the proof of the following lemma we use the notation
$\nu_0^i:=\nu^i_0(0)$ and $\varphi^{i}_x:=\varphi^{i}_x(0)$.

\begin{lemma}\label{LopatinskiiShapiro}
The Lopatinskii--Shapiro condition is satisfied.
\end{lemma}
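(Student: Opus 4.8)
The plan is to solve the three decoupled fourth order ODEs explicitly, feed the decaying solutions into the boundary conditions at $x=0$, and show that the conditions force all coefficients to vanish. Writing $a_i:=|\varphi^i_x|$, the motion equation for curve $i$ has characteristic polynomial $\lambda+a_i^{-4}\mu^4=0$, i.e. $\mu^4=-\lambda\,a_i^4$. Since $\Re(\lambda)>0$, none of the four roots is purely imaginary (a purely imaginary $\mu=it$ would give $\mu^4=t^4\ge 0$, hence $\lambda\le 0$ real, a contradiction), and a short argument on arguments shows exactly two roots lie in the open left half-plane and two in the open right half-plane. The decay requirement $\lim_{x\to\infty}|\gamma^i(x)|=0$ therefore eliminates the two growing modes, so every admissible solution has the form
\[
\gamma^i(x)=V_1^i\,e^{\mu_1^i x}+V_2^i\,e^{\mu_2^i x},\qquad V_1^i,V_2^i\in\mathbb{C}^2,\quad \Re(\mu_1^i),\Re(\mu_2^i)<0.
\]

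The key structural observation is that the two decaying roots differ by a factor $i$: their arguments differ by $\pi/2$, so after labelling $\mu_2^i=i\,\mu_1^i$, and moreover $\mu_j^i=a_i\kappa_j$ where $\kappa_1,\kappa_2$ are the two decaying fourth roots of $-\lambda$, common to all three curves. I will use this throughout. Since $(\mu_2^i)^2=-(\mu_1^i)^2$, the second order condition $\gamma^i_{xx}(0)=(\mu_1^i)^2V_1^i+(\mu_2^i)^2V_2^i=0$ forces $V_1^i=V_2^i=:W^i$ for each $i$. Then $\gamma^i(0)=2W^i$, so the two concurrency conditions give $W^1=W^2=W^3=:W$.

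It remains to exploit the third order condition. Using $V_1^i=V_2^i=W$, the relation $(\mu_2^i)^3=-i(\mu_1^i)^3$, and $\mu_1^i=a_i\kappa_1$, I compute
\[
\gamma^i_{xxx}(0)=\big((\mu_1^i)^3+(\mu_2^i)^3\big)W=(1-i)\,\kappa_1^3\,a_i^3\,W.
\]
Substituting into the third order condition and cancelling the nonzero scalar $(1-i)\kappa_1^3$ (while the $a_i^3$ cancels the prefactor $a_i^{-3}$) collapses it to
\[
\sum_{i=1}^3\langle W,\nu_0^i\rangle\,\nu_0^i=0,
\]
i.e. $MW=0$ for the real symmetric matrix $M:=\sum_{i=1}^3\nu_0^i\otimes\nu_0^i$. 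Here the admissibility hypothesis enters: since $\mathrm{span}\{\nu_0^1,\nu_0^2,\nu_0^3\}=\mathbb{R}^2$, for any real $v$ one has $v^{T}Mv=\sum_i\langle v,\nu_0^i\rangle^2$, which vanishes only at $v=0$, so $M$ is positive definite, hence invertible over $\mathbb{R}$ and therefore over $\mathbb{C}$. Thus $W=0$, all coefficients $V_j^i$ vanish, and $\gamma^i\equiv 0$.

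I expect the main obstacle to be the root bookkeeping that pins down the two decaying roots as $\mu_1^i$ and $i\mu_1^i$: it is exactly the relation $(\mu_2^i)^2=-(\mu_1^i)^2$ that makes the second order condition collapse to $V_1^i=V_2^i$, and without it the coupling would not simplify. The second delicate point is passing from the complex identity $MW=0$ to $W=0$, for which reducing to positive definiteness of the real matrix $M$ — equivalently, the spanning property of the normals, which is precisely the role of the non-degeneracy assumption $\mathrm{span}\{\nu_0^i\}=\mathbb{R}^2$ — is the clean way out. As a consistency check, the twelve complex scalar unknowns $V_j^i$ are matched by twelve scalar boundary equations ($4$ concurrency, $6$ second order, $2$ third order), confirming that the Lopatinskii--Shapiro system is square.
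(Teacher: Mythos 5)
Your proof is correct, but it follows a genuinely different route from the paper. The paper's argument is an energy (integration by parts) method: it tests the ODE against $\vert\varphi^i_x\vert\langle \overline{\gamma}^i,\nu^i_0\rangle\nu^i_0$, integrates by parts twice, shows the boundary terms vanish using the concurrency, second and third order conditions, and takes real parts (using $\Re(\lambda)>0$) to conclude $\langle\gamma^i,\nu^i_0\rangle\equiv 0$; then concurrency plus the spanning assumption give $\gamma^i(0)=0$, and a second test with the tangential components finishes the argument. You instead compute the stable subspace explicitly: the characteristic roots $\mu^4=-\lambda a_i^4$ split two-and-two across the imaginary axis (your half-plane bookkeeping is right, since $\arg(-\lambda a_i^4)/4+k\pi/2$ places exactly the $k=1,2$ roots in the open left half-plane, and these two adjacent roots indeed differ by the factor $i$), so the decaying solutions are spanned per curve by two exponentials; the boundary conditions then become a square linear system that you solve by hand, with the relation $(\mu_2^i)^2=-(\mu_1^i)^2$ collapsing the second order condition to $V_1^i=V_2^i$, concurrency identifying the common vector $W$, and the third order condition reducing to invertibility of the positive definite Gram-type matrix $\sum_i\nu_0^i\otimes\nu_0^i$ — which is exactly where the paper's non-degeneracy hypothesis $\mathrm{span}\{\nu_0^1,\nu_0^2,\nu_0^3\}=\mathbb{R}^2$ enters, the same place it enters in the paper's proof. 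Your approach is the classical, constructive verification of Lopatinskii--Shapiro (it exhibits the stable modes and confirms the system is square, which is illuminating), but it leans on the diagonal form of the principal part and the explicit fourth-root structure; the paper's energy method avoids all root computations and transfers with minimal change to the $C^1$-flow boundary conditions of Section 4, where the tested quantities ($\langle\gamma^i_{xx},\nu^i_0\rangle$ etc.) are adapted but the integration-by-parts skeleton is identical. Both proofs are complete and correct.
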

\begin{proof}
Let $(\gamma^{i})_{i=1,2,3}$ be a solution to~\ref{LopatinskiiShapirosystem} 
such that $\lim_{x\to\infty}\lvert \gamma^i(x)\rvert=0$. 
With $\gamma^{i}$ also all derivatives decay to zero for $x$ tending to
infinity which follows from the specific exponential form of the
solutions to (\ref{LopatinskiiShapirosystem}).
We test the motion equation by 
$\vert\varphi^i_x\vert\left\langle \overline{\gamma}^i(x),\nu^i_0\right\rangle \nu^i_0$,
integrate twice by parts and sum to get 
\begin{align}\label{afterint}
0&=\sum_{i=1}^3\lambda\vert\varphi^i_x\vert \int_0^\infty\vert \left\langle\gamma^i(x), \nu^i_0\right\rangle\vert^2
\,\mathrm{d}x
+\sum_{i=1}^3\frac{1}{\vert\varphi^i_x\vert^3} 
\int_0^\infty\vert \left\langle \gamma_{xx}^i(x),\nu^i_0\right\rangle\vert^2\,\mathrm{d}x\\
&+\sum_{i=1}^3\frac{1}{\vert\varphi^i_x\vert^3}\left\langle\overline{\gamma}^i(0),\nu^i_0\right\rangle \left\langle \gamma^i_{xxx}(0),\nu^i_0 \right\rangle 
-\sum_{i=1}^3\frac{1}{\vert\varphi^i_x\vert^3}
\left\langle\overline{\gamma}_x^i(0),\nu^i_0\right\rangle \left\langle \gamma^i_{xx}(0),\nu^i_0 \right\rangle \,,\nonumber
\end{align}
where we have already used the decay at infinity.
Using the concurrency and the third order condition in the first boundary term we get
\begin{equation}
\sum_{i=1}^3\frac{1}{\vert\varphi^i_x\vert^3}\left\langle\overline{\gamma}^i(0),\nu^i_0\right\rangle \left\langle \gamma^i_{xxx}(0),\nu^i_0 \right\rangle =
\left\langle\overline{\gamma}^1(0),
\sum_{i=1}^3\frac{1}{\vert\varphi^i_x\vert^3}
\left\langle \gamma^i_{xxx}(0),\nu^i_0 \right\rangle\nu^i_0\right\rangle
 =0\,.
\end{equation}
The second boundary term vanishes due to the second order condition.
Now we take the real part of~\eqref{afterint} and we obtain
$\left\langle \gamma^i(x),\nu^i_0\right\rangle =0$ 
for $x\in[0,\infty)$ and $i\in\{1,2,3\}$.

In particular, using the concurrency condition, we find that
$\left\langle \gamma^1(x),\sum_{i=1}^3a^i\nu^i_0\right\rangle =0$  for all $a^i\in\mathbb{C}$.
As $\varphi$ is an admissible initial parametrisation, this enforces $\gamma^i(0)=0$.
 
In the same way as before, testing the motion equation by 
$\vert\varphi^i_x\vert\left\langle \overline{\gamma}^i(x),\tau^i_0\right\rangle \tau^i_0$ 
where $\tau^i_0$ is the unit tangent vector in $\varphi^i(0)$,
we get

\begin{align}\label{withtang}
0&=\sum_{i=1}^3\lambda\vert\varphi^i_x\vert \int_0^\infty\vert \left\langle\gamma^i(x), \tau^i_0\right\rangle\vert^2
\,\mathrm{d}x
+\sum_{i=1}^3\frac{1}{\vert\varphi^i_x\vert^3} 
\int_0^\infty\vert\left\langle \gamma_{xx}^i(x), \tau^i_0\right\rangle\vert^2\,\mathrm{d}x\\
&+\sum_{i=1}^3\frac{1}{\vert\varphi^i_x\vert^3}\left\langle\overline{\gamma}^i(0),\tau^i_0\right\rangle \left\langle \gamma^i_{xxx}(0),\tau^i_0 \right\rangle 
-\sum_{i=1}^3\frac{1}{\vert\varphi^i_x\vert^3}
\left\langle\overline{\gamma}_x^i(0),\tau^i_0\right\rangle \left\langle \gamma^i_{xx}(0),\tau^i_0 \right\rangle \,.\nonumber
\end{align}
Again the second boundary term vanishes due to the second order condition
and the first boundary term vanishes due to the fact that we have previously 
obtained $\gamma^i(0)=0$.
Considering again only the real part of~\eqref{withtang}
we find for all $x\in[0,\infty)$ and $i\in\{1,2,3\}$ that 
$\left\langle\gamma^i(x), \tau^i_0\right\rangle =0$. Hence we conclude that $\gamma^i(x)=0$ for every 
$x\in[0,\infty)$ and $i\in\{1,2,3\}$.
\end{proof}

Finally to check the complementary conditions stated in~\cite[page 12]{solonnikov2}
 for the initial data
we observe, using again the notation of \cite{solonnikov2},  that the $6\times 6$ matrix $[C_{\alpha j}]$ is the identity matrix.
With the choice $\gamma_{\alpha j}=0$ for every $\alpha\in\{1,\ldots,6\}$ and $j\in\{1,\ldots,6\}$
we obtain $\rho_\alpha=0$ and $C_0=Id$.
Moreover the rows of the matrix $\mathcal{D}(x,p)=\hat{\mathcal{L}_0}(x,0,0,p)=p^5 Id$
are linearly independent modulo the polynomial $p^6$.

\subsubsection{Existence theorem for the linearised system}

Adapting~\cite[Theorem 4.9]{solonnikov2} to our situation we obtain the following existence result
for the system~\eqref{lyntheta}:
\begin{teo}\label{linearexistence}
Let $\alpha\in(0,1)$.
For every $T>0$ the 
Problem~\eqref{lyntheta} has a unique solution $\gamma\in C ^{\frac{4+\alpha}{4},{^{4+\alpha}}}([0,T]\times[0,1];(\mathbb{R}^2)^3)$
provided that
\begin{itemize}
\item $f^i\in C ^{\frac\alpha4,{^{\alpha}}}([0,T]\times[0,1];\mathbb{R}^2)$ 
for $i\in\{1,2,3\}\,$;
\item $b\in C  ^{\frac{1+\alpha}{4}}([0,T];\mathbb{R}^4)\,$;
\item $\psi\in C^{4+\alpha}\left([0,1];\left(\mathbb{R}^2\right)^3\right)$ and
\item $\psi$ satisfies the linear compatibility conditions~\ref{lincompcond} with respect to $(f,b)$.
\end{itemize}
Moreover, for all $T>0$ there exists a $C(T)>0$ such that for all
solutions  the inequality
\begin{equation}
\sum_{i=1}^3 \Vert \gamma^i\Vert_{\frac{(4+\alpha)}{4},^{4+\alpha}} 
\leq C(T)\left( 
\sum_{i=1}^3 \Vert f^i\Vert_{\frac\alpha4,^{\alpha}}+
\Vert b\Vert_{\frac{1+\alpha}{4}}+
\Vert \psi\Vert_{4+\alpha}
\right)
\end{equation} 
holds.
\end{teo}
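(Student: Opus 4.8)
The plan is to verify that the linear system~\eqref{lyntheta} fits exactly into the framework of Solonnikov's maximal regularity theorem for parabolic boundary value problems in H\"older spaces, and then to read off existence, uniqueness and the Schauder estimate directly from~\cite[Theorem 4.9]{solonnikov2}. Most of the verification has in fact already been carried out in the preceding subsections, so the proof reduces to collecting these facts and checking that the function--space indices match the hypotheses of that theorem. First I would recall from Subsection~\ref{Solo} that the operator $\mathcal{L}$ is parabolic in the sense of Solonnikov: with the weights $s_k=4$, $t_j=0$ one has $\mathcal{L}_0=\mathcal{L}$ and $\det\mathcal{L}_0(x,t,i\xi,p)=\prod_{j=1}^3\bigl(\tfrac{2}{|\varphi^j_x|}\xi^4+p\bigr)^2$, whose roots in $p$ stay in the left half--plane uniformly with the constant $\delta$ exhibited there.

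Next I would organise the boundary operators by their orders: the two concurrency conditions are of order $0$, the second order condition is of order $2$, and the third order condition is of order $3$. This is precisely the situation of boundary conditions of mixed order that Solonnikov's theory is designed to handle, and it dictates the regularity demanded of the corresponding data. The key structural input, namely that these boundary operators satisfy the complementary (Lopatinskii--Shapiro) condition relative to $\mathcal{L}_0$ at both endpoints, is exactly the content of Lemma~\ref{LopatinskiiShapiro}; I would invoke it at $y=0$ and note the identical argument at $y=1$. The complementary condition on the initial data is the computation already recorded above, where the matrix $[C_{\alpha j}]$ is the identity, $\rho_\alpha=0$, $C_0=\mathrm{Id}$, and the rows of $\mathcal{D}(x,p)=p^5\,\mathrm{Id}$ are linearly independent modulo $p^6$. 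With parabolicity, both complementary conditions, and smoothness of the coefficients $2/|\varphi^i_x|^4$ in hand, all hypotheses of~\cite[Theorem 4.9]{solonnikov2} are met.

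Finally I would match the H\"older exponents: a solution of regularity $C^{\frac{4+\alpha}{4},4+\alpha}$ forces the right hand side of the (order $4$) motion equation to lie in $C^{\frac{\alpha}{4},\alpha}$, and the data of a boundary operator of order $m$ to lie in $C^{\frac{4+\alpha-m}{4}}$ in time; for the order $3$ third order condition this gives exactly $b\in C^{\frac{1+\alpha}{4}}$, while the homogeneous order $0$ and order $2$ conditions impose no extra data regularity. Theorem 4.9 then yields a unique solution in $C^{\frac{4+\alpha}{4},4+\alpha}([0,T]\times[0,1];(\mathbb{R}^2)^3)$ together with the stated estimate, the constant $C(T)$ being the Schauder constant furnished by that theorem. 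I expect the only genuine work to be bookkeeping: faithfully translating Solonnikov's general indices $s_k,t_j$ and his ordering of the boundary operators into our concrete system, and confirming that the requirements in Definition~\ref{lincompcond} coincide with the compatibility conditions Solonnikov imposes on the data at $t=0$. That matching, rather than any new estimate, is the delicate point.
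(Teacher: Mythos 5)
Your proposal is correct and follows essentially the same route as the paper: the paper proves this theorem precisely by verifying Solonnikov parabolicity (with weights $s_k=4$, $t_j=0$), the Lopatinskii--Shapiro condition at both endpoints (Lemma~\ref{LopatinskiiShapiro}), and the complementary conditions on the initial data, and then reads off existence, uniqueness and the Schauder estimate from~\cite[Theorem 4.9]{solonnikov2}. Your exponent bookkeeping (order-$3$ boundary data in $C^{\frac{1+\alpha}{4}}$, homogeneous lower-order conditions, compatibility of Definition~\ref{lincompcond} with Solonnikov's conditions at $t=0$) matches what the paper implicitly relies on.
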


\subsection{Short time existence of the analytic problem}\label{contractionargu}

In the following we fix a coefficient $\alpha\in (0,1)$ and an admissible initial parametrisation $\varphi$ 
with $\Vert \varphi \Vert_{C^{4+\alpha}([0,1])}=R$.
\begin{dfnz}
For  $T>0$
we consider the  linear spaces
\begin{align*}
\mathbb{E}_T:=\{&\gamma\in 
C  ^{\frac{4+\alpha}{4},{^{4+\alpha}}}([0,T]\times[0,1];(\mathbb{R}^2)^3)
\;\text{such that for}\;i\in\{1,2,3\},   t\in[0,T]\,,\\
 &y\in\{0,1\}\;\text{it holds}\,
\gamma^1(t,y)=\gamma^2(t,y)=\gamma^3(t,y),\gamma^i_{xx}(t,y)=0\}
\,,\\
\mathbb{F}_T:=\{&(f,b,\psi)\in 
C  ^{\frac{\alpha}{4},{^{\alpha}}}([0,T]\times[0,1];(\mathbb{R}^2)^3)
\times 
C  ^{\frac{1+\alpha}{4}}([0,T];\mathbb{R}^4)
\times  C^{4+\alpha}\left([0,1];\left(\mathbb{R}^2\right)^3\right)\\
&\text{such that the linear compatibility conditions hold}
\}
\,,
\end{align*}
endowed with the induced norms.
\end{dfnz}

\begin{lemma}
The map $L_{T}:\mathbb{E}_T\to \mathbb{F}_T$ defined by
$$
L_{T}(\gamma)=
\begin{pmatrix}
\left(\gamma^i_t+\frac{2}{\vert\varphi^i_x\vert^4}\gamma^i_{xxxx}\right)_{i\in\{1,2,3\}}\\
-\mathrm{tr}_{\partial[0,1]}\sum_{i=1}^3\frac{1}{\vert \varphi^i_x\vert^3}
\left\langle \gamma^i_{xxx},\nu_0^i\right\rangle \nu_0^i\\
\gamma_{\vert t=0}
\end{pmatrix}
$$
is a continuous isomorphism.
\end{lemma}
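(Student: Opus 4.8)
The plan is to show that $L_T$ is a continuous isomorphism by invoking the linear existence Theorem~\ref{linearexistence} as the core analytic input, and then reconciling the different ways in which the concurrency and second order conditions are encoded in the two formulations. The key observation is that in the definition of $\mathbb{E}_T$ the concurrency condition $\gamma^1(t,y)=\gamma^2(t,y)=\gamma^3(t,y)$ and the second order condition $\gamma^i_{xx}(t,y)=0$ are \emph{built into the domain}, whereas in the linear system~\eqref{lyntheta} they appear as equations with right hand side zero. Thus $L_T$ only records the motion equation, the third order condition, and the initial data, and one must check that the target space $\mathbb{F}_T$ is exactly the right receptacle.

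First I would verify that $L_T$ is well defined and continuous. Well-definedness means that for $\gamma\in\mathbb{E}_T$ the triple $L_T(\gamma)=(f,b,\psi)$ actually lies in $\mathbb{F}_T$; the nontrivial part is that $\psi=\gamma_{|t=0}$ together with $(f,b)$ must satisfy the linear compatibility conditions of Definition~\ref{lincompcond}. This is automatic: the concurrency and second order compatibility equations hold because $\gamma\in\mathbb{E}_T$ enforces them for all $t$, hence in particular at $t=0$; the third order and the fourth (matching) compatibility equations hold by the very definition of $b$ and $f$ as the boundary and motion operators applied to $\gamma$, evaluated at $t=0$. Continuity of $L_T$ follows because each component is a bounded linear combination of derivatives of $\gamma$ with coefficients depending only on the fixed $\varphi$, and the trace operator $\mathrm{tr}_{\partial[0,1]}$ is bounded between the relevant parabolic Hölder spaces.

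Next I would prove bijectivity. Given $(f,b,\psi)\in\mathbb{F}_T$, Theorem~\ref{linearexistence} yields a unique $\gamma\in C^{\frac{4+\alpha}{4},4+\alpha}([0,T]\times[0,1];(\mathbb{R}^2)^3)$ solving the full system~\eqref{lyntheta}. I must check that this solution lies in $\mathbb{E}_T$, i.e.\ that it satisfies the concurrency and second order conditions for every $t\in[0,T]$; but these are precisely two of the boundary equations in~\eqref{lyntheta} with zero right hand side, so they hold identically and $\gamma\in\mathbb{E}_T$. Then $L_T(\gamma)=(f,b,\psi)$ by construction, giving surjectivity, and uniqueness in Theorem~\ref{linearexistence} gives injectivity. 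Finally, the a priori estimate in Theorem~\ref{linearexistence} shows $\|\gamma\|\le C(T)\bigl(\|f\|+\|b\|+\|\psi\|\bigr)$, which is exactly the boundedness of $L_T^{-1}$; together with the continuity of $L_T$ and the open mapping theorem (or directly from the two-sided bound) this establishes that $L_T$ is a continuous isomorphism.

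The step I expect to be the main obstacle is the bookkeeping of compatibility conditions and the correct identification of the boundary data spaces. One must be careful that the components the domain $\mathbb{E}_T$ already absorbs (concurrency, second order) are not double-counted in $\mathbb{F}_T$, and conversely that every equation retained in $L_T$ lands in a space matching the hypotheses of Theorem~\ref{linearexistence}; in particular one should confirm that $b$ indeed lies in $C^{\frac{1+\alpha}{4}}([0,T];\mathbb{R}^4)$ and that the third order operator produces data of exactly this regularity. Once the spaces are matched precisely, the isomorphism property is a direct translation of the existence, uniqueness, and estimate already granted by Solonnikov's theory.
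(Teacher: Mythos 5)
Your proposal is correct and takes exactly the route of the paper, whose entire proof is the one-line observation that $L_T$ is ``clearly well defined, bijective and continuous due to Theorem~\ref{linearexistence}''; you simply fill in the details the paper omits (well-definedness via the compatibility conditions at $t=0$, surjectivity/injectivity from existence/uniqueness in Theorem~\ref{linearexistence}, and boundedness of $L_T^{-1}$ from the a priori estimate). The only point worth stating explicitly is that the fourth (matching) compatibility condition reduces to $\gamma^i_t(0,y)=\gamma^j_t(0,y)$, which follows by differentiating the concurrency condition of $\mathbb{E}_T$ in time, not merely ``by definition'' of $f$ and $b$.
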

\begin{proof}
The map $L_{T}$ is clearly well defined, bijective 
and continuous due to Theorem~\ref{linearexistence}.
\end{proof}

\begin{rem}
Notice that as  $\varphi$ is regular, there exists a constant $C>0$ such that
$$
\inf_{x\in[0,1]}\vert \varphi_x(x)\vert \geq C\,.
$$
As a consequence
we have 
$$
\sup_{x\in[0,1]}\frac{1}{\vert \varphi_x(x)\vert} \leq \frac{1}{C}\,,
$$
and 
for every $j\in\mathbb{N}$ there hold
$$
\left\lVert \frac{1}{\vert \varphi^i_x\vert^j}
\right\rVert_{C^{\alpha}([0,1])}
\leq \left(\frac{\Vert \varphi_x\Vert_{C^{\alpha}([0,1])}}{C^2}\right)^j
\leq \left(\frac{R}{C^2}\right)^j\,,
$$
and
$$
\left\lVert \frac{1}{\vert \varphi^i_x\vert^j}\right\rVert_{C^{1+\alpha}([0,1])}
\leq \widetilde{C}(R,C)\,.
$$
\end{rem}

\begin{dfnz}
We define the affine spaces
\begin{align*}
\mathbb{E}^\varphi_T=\{&\gamma\in 
\mathbb{E}_T\,\text{such that }\,\gamma_{\vert t=0}=\varphi\}
\,,\\
\mathbb{F}^\varphi_T=\{&(f,b)\,\text{such that }\, (f,b,\varphi)\in\mathbb{F}_{T}\}
\times\{\varphi\}
\,.
\end{align*}
\end{dfnz}

In the following 
we denote by $B_M$ the open ball of radius $M$ and center $0$ in $\mathbb{E}_T$.

\begin{lemma}
There exists $\tilde{t}(M, C)\in(0,1]$ such that 
for all
$\gamma\in \mathbb{E}^\varphi_T\cap B_M$
and for all $t\in[0,\tilde{t}(M,C)]$ we have
\begin{equation}\label{inverseest}
\inf_{x\in[0,1]}\vert \gamma_x(t,x)\vert\geq\frac{C}{2}\,.
\end{equation}
\end{lemma}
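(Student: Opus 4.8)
The plan is to exploit the fact that $\gamma\in\mathbb{E}_T^\varphi$ agrees with $\varphi$ at $t=0$, so that $\gamma_x(0,x)=\varphi_x(x)$, and then control the deviation $\gamma_x(t,x)-\varphi_x(x)$ uniformly in $x$ by a small power of $t$ using the parabolic Hölder regularity encoded in the membership $\gamma\in B_M$. Since $\varphi$ is regular with $\inf_{x\in[0,1]}|\varphi_x(x)|\geq C$ (as recorded in the Remark above), it suffices to make the deviation smaller than $C/2$ on a short time interval, and then a reverse triangle inequality gives $|\gamma_x(t,x)|\geq|\varphi_x(x)|-|\gamma_x(t,x)-\varphi_x(x)|\geq C-C/2=C/2$ as desired.

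\textbf{Key steps.} First I would fix $\gamma\in\mathbb{E}_T^\varphi\cap B_M$ and $x\in[0,1]$, and write
$$
\gamma_x(t,x)-\gamma_x(0,x)=\gamma_x(t,x)-\varphi_x(x).
$$
The function $t\mapsto\gamma_x(t,x)$ is Hölder continuous in time: indeed, $\gamma\in C^{\frac{4+\alpha}{4},4+\alpha}$ controls the parabolic semi-norm of $\gamma_x$, and since the time-regularity exponent attached to $\partial_x\gamma$ (with $4i+j=1$) is $\frac{4+\alpha-1}{4}=\frac{3+\alpha}{4}>0$, one has a bound of the form
$$
|\gamma_x(t,x)-\varphi_x(x)|\leq [\gamma_x]_{\frac{3+\alpha}{4},0}\,t^{\frac{3+\alpha}{4}}\leq M\,t^{\frac{3+\alpha}{4}},
$$
uniformly in $x$, where the last inequality uses $\gamma\in B_M$ so that its full norm, and in particular this semi-norm, is bounded by $M$. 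Next I would choose
$$
\tilde t(M,C):=\min\left\{1,\left(\frac{C}{2M}\right)^{\frac{4}{3+\alpha}}\right\},
$$
so that for all $t\in[0,\tilde t(M,C)]$ we have $M\,t^{\frac{3+\alpha}{4}}\leq C/2$. Finally, combining this with the reverse triangle inequality and the lower bound on $|\varphi_x|$ yields $|\gamma_x(t,x)|\geq C/2$ for every $x\in[0,1]$, and taking the infimum over $x$ gives~\eqref{inverseest}.

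\textbf{Main obstacle.} The only subtlety is to make precise that the bound on the deviation is genuinely uniform in $x$ and that the relevant time-Hölder semi-norm of $\gamma_x$ is dominated by the $\mathbb{E}_T$-norm, hence by $M$ on $B_M$; this is exactly what the definition of the parabolic Hölder norm $\|\cdot\|_{C^{\frac{4+\alpha}{4},4+\alpha}}$ provides, since the term $[\partial_x\gamma]_{\frac{3+\alpha}{4},0}$ appears in that norm (as $0<4+\alpha-1<4$). I expect no real difficulty here, only bookkeeping: one must note that $\tilde t$ depends only on $M$ and $C$ (and on the fixed $\alpha$), not on the particular $\gamma$, which is immediate from the explicit choice above. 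The use of $T\leq 1$ implicit in Lemma~\ref{notprovedbis} is not even needed, since the estimate is purely pointwise in $t$.
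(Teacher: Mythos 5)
Your proposal is correct and follows essentially the same argument as the paper: the reverse triangle inequality $|\gamma_x(t,x)|\geq|\varphi_x(x)|-|\gamma_x(t,x)-\varphi_x(x)|$, the bound $|\gamma_x(t,x)-\varphi_x(x)|\leq[\gamma_x]_{\frac{3+\alpha}{4},0}\,t^{\frac{3+\alpha}{4}}\leq Mt^{\frac{3+\alpha}{4}}$, and a choice of $\tilde t(M,C)$ making this deviation at most $C/2$. The only difference is that you spell out the explicit formula for $\tilde t(M,C)$, which the paper leaves implicit.
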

\begin{proof}
We have
$$
\vert \gamma_x(t,x)\vert\geq \vert\varphi_x(x)\vert -\vert \gamma_x(t,x)-\varphi_x(x)\vert
$$
with $\vert \varphi_x(x)\vert\geq C$
and $\vert \gamma_x(t,x)-\varphi_x(x)\vert\leq \left[\gamma_x\right] _{\beta,0}t^\beta
\leq Mt^\beta$ with $\beta=\frac{3}{4}+\frac{\alpha}{4}$.
Passing to the infimum one gets the claim.
\end{proof}

As a consequence every $\gamma \in \mathbb{E}^\varphi_T\cap B_M$
is a regular curve for $T\in [0,\tilde{t}(M,C)]$.
Moreover 
we find for every $\gamma\in \mathbb{E}^\varphi_T\cap B_M$ and 
$t\in[0,\tilde{t}(M,C)]$ that
\begin{equation}\label{lowbound}
\sup_{x\in[0,1]}\frac{1}{\vert \gamma_x(t,x)\vert}\leq \frac{2}{C}\,.
\end{equation}

\begin{lemma}\label{regularityest}
Let $M$ be positive and
$\tilde{t}(M,C)$  be as above.
Then for every $\gamma\in \mathbb{E}^\varphi_T\cap B_M$ with $T\in[0,\tilde{t}(M,C)]$
$$
\left\lVert \frac{1}{\vert \gamma^i_x\vert^j}\right\rVert_{C^{\frac{\alpha}{4},\alpha}([0,T]\times[0,1])}
\leq \left(\frac{4M}{C^2}\right)^j\,.
$$
\end{lemma}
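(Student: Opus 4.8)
The plan is to reduce the estimate to the single exponent $j=1$ and then use that the parabolic H\"older norm is submultiplicative. For the decomposition $\Vert u\Vert_{C^{\frac{\alpha}{4},\alpha}}=\Vert u\Vert_\infty+[u]_{0,\alpha}+[u]_{\frac{\alpha}{4},0}$ one checks directly, distributing the increment of a product $uv$ over its two factors, that
\begin{equation*}
\Vert uv\Vert_{C^{\frac{\alpha}{4},\alpha}}\leq \Vert u\Vert_{C^{\frac{\alpha}{4},\alpha}}\Vert v\Vert_{C^{\frac{\alpha}{4},\alpha}}\,,
\end{equation*}
since the only term by which the right-hand side exceeds the left is the nonnegative product of seminorms. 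Writing $\frac{1}{\vert\gamma^i_x\vert^j}=\bigl(\frac{1}{\vert\gamma^i_x\vert}\bigr)^j$, it then suffices to establish the case $j=1$, namely $\Vert 1/\vert\gamma^i_x\vert\Vert_{C^{\frac{\alpha}{4},\alpha}([0,T]\times[0,1])}\leq \tfrac{4M}{C^2}$, and raise it to the $j$-th power.

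For the supremum part I invoke the lower bound~\eqref{lowbound} directly, which gives $\Vert 1/\vert\gamma^i_x\vert\Vert_\infty\leq 2/C$. For the two seminorms I use that $s\mapsto 1/s$ is locally Lipschitz away from zero: for points $p$ and $q$ the reverse triangle inequality together with~\eqref{lowbound} yields
\begin{equation*}
\Bigl\vert \frac{1}{\vert\gamma^i_x(p)\vert}-\frac{1}{\vert\gamma^i_x(q)\vert}\Bigr\vert
=\frac{\bigl\vert\,\vert\gamma^i_x(q)\vert-\vert\gamma^i_x(p)\vert\,\bigr\vert}{\vert\gamma^i_x(p)\vert\,\vert\gamma^i_x(q)\vert}
\leq \frac{4}{C^2}\,\vert\gamma^i_x(p)-\gamma^i_x(q)\vert\,.
\end{equation*}
Specialising $p,q$ to equal time and dividing by $\vert x-x'\vert^\alpha$, respectively to equal space and dividing by $\vert t-t'\vert^{\alpha/4}$, gives $[1/\vert\gamma^i_x\vert]_{0,\alpha}\leq \tfrac{4}{C^2}[\gamma^i_x]_{0,\alpha}$ and $[1/\vert\gamma^i_x\vert]_{\frac{\alpha}{4},0}\leq \tfrac{4}{C^2}[\gamma^i_x]_{\frac{\alpha}{4},0}$.

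It remains to control the seminorms of $\gamma^i_x$ by $M$. Since $\gamma\in B_M$ we have $\Vert\gamma\Vert_{C^{\frac{4+\alpha}{4},4+\alpha}}\leq M$. The spatial seminorm is dominated by $[\gamma^i_x]_{0,\alpha}\leq \Vert\gamma^i_{xx}\Vert_\infty$ because the spatial domain $[0,1]$ has diameter one, so $\vert x-x'\vert^{1-\alpha}\leq 1$. For the temporal seminorm I exploit $T\leq 1$ to compare H\"older exponents: the norm of $\gamma$ already contains the higher-order time seminorm $[\gamma^i_x]_{\frac{3+\alpha}{4},0}$, and on a time interval of length $T\leq 1$ one has $[\gamma^i_x]_{\frac{\alpha}{4},0}\leq T^{3/4}\,[\gamma^i_x]_{\frac{3+\alpha}{4},0}\leq [\gamma^i_x]_{\frac{3+\alpha}{4},0}$. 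Both seminorms are therefore bounded by $\Vert\gamma\Vert_{C^{\frac{4+\alpha}{4},4+\alpha}}\leq M$.

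Collecting the three contributions bounds $\Vert 1/\vert\gamma^i_x\vert\Vert_{C^{\frac{\alpha}{4},\alpha}}$ by a constant multiple of $M/C^2$; after absorbing the lower-order term $2/C$ (which is harmless since $C\leq \inf_x\vert\varphi_x\vert\leq R\leq M$) one arrives at the stated constant $4M/C^2$, and submultiplicativity then gives $\Vert 1/\vert\gamma^i_x\vert^j\Vert_{C^{\frac{\alpha}{4},\alpha}}\leq (4M/C^2)^j$. The only points requiring genuine care are the verification of submultiplicativity for this specific norm, which is what legitimises the reduction to $j=1$, and the exponent comparison that trades the sub-optimal time regularity $\alpha/4$ against the available $\tfrac{3+\alpha}{4}$ using $T\leq 1$; everything else is elementary bookkeeping built on~\eqref{lowbound}.
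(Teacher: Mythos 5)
Your strategy is exactly the paper's own (the paper's proof is a two--line sketch of it): handle $j=1$ by combining the lower bound~\eqref{lowbound} with the definition of the parabolic H\"older norm, then get $j\geq 2$ from submultiplicativity of that norm. Your verification of submultiplicativity, the Lipschitz estimate for $s\mapsto 1/s$, and the seminorm comparisons $[\gamma^i_x]_{0,\alpha}\leq \Vert\gamma^i_{xx}\Vert_\infty$ and $[\gamma^i_x]_{\frac{\alpha}{4},0}\leq T^{3/4}[\gamma^i_x]_{\frac{3+\alpha}{4},0}$ are all correct. The flaw is in the final bookkeeping: the estimates as you organised them give
\begin{equation*}
\left\lVert \frac{1}{\vert\gamma^i_x\vert}\right\rVert_{C^{\frac{\alpha}{4},\alpha}}
\leq \frac{2}{C}+\frac{4}{C^2}\left([\gamma^i_x]_{0,\alpha}+[\gamma^i_x]_{\frac{\alpha}{4},0}\right)
\leq \frac{2}{C}+\frac{8M}{C^2}\,,
\end{equation*}
because you bound each of the two seminorms by $M$ separately; absorbing $2/C\leq 2M/C^2$ via $C\leq M$ then yields $10M/C^2$, not the stated $4M/C^2$. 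No rearrangement of the inequalities you actually wrote produces the constant $4$, so the lemma as stated (with its explicit constant) is not yet proved.

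The repair is to use that the quantities you compare against are \emph{distinct summands} of the norm of $\mathbb{E}_T$, so their sum, not each one individually, is controlled by $M$. Write $\frac{2}{C}=\frac{4}{C^2}\cdot\frac{C}{2}$ and note $\frac{C}{2}\leq C\leq \vert\gamma^i_x(0,x)\vert\leq\Vert\gamma^i_x\Vert_\infty$, since $\gamma^i(0,\cdot)=\varphi^i$ and $\inf_x\vert\varphi^i_x(x)\vert\geq C$. Then
\begin{equation*}
\left\lVert \frac{1}{\vert\gamma^i_x\vert}\right\rVert_{C^{\frac{\alpha}{4},\alpha}}
\leq\frac{4}{C^2}\left(\frac{C}{2}+[\gamma^i_x]_{0,\alpha}+[\gamma^i_x]_{\frac{\alpha}{4},0}\right)
\leq\frac{4}{C^2}\left(\Vert\gamma^i_x\Vert_\infty+\Vert\gamma^i_{xx}\Vert_\infty+[\gamma^i_x]_{\frac{3+\alpha}{4},0}\right)
\leq\frac{4M}{C^2}\,,
\end{equation*}
where the last step holds because $\Vert\gamma^i_x\Vert_\infty$, $\Vert\gamma^i_{xx}\Vert_\infty$ and $[\gamma^i_x]_{\frac{3+\alpha}{4},0}$ appear as three separate terms in the norm $\Vert\gamma\Vert_{\mathbb{E}_T}\leq M$ (here $T\leq\tilde{t}(M,C)\leq 1$ is used for the time seminorm). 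With this correction your reduction via submultiplicativity gives exactly $(4M/C^2)^j$ for all $j$, and the proof coincides in substance with the paper's; for the way the lemma is used later (constants of the form $\tilde{C}(M,C)$) the distinction is immaterial, but it is needed to obtain the inequality as stated.
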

\begin{proof}
For $j=1$ the result follows directly combining
 the estimate~\eqref{lowbound} with
 the definition of the norm 
$\Vert\cdot\Vert_{C^{\frac{\alpha}{4},\alpha}([0,T]\times[0,1])}$.
The case $j\geq 2$ follows from multiplicativity of the norm. 
\end{proof}

\begin{rem}
Let again $M$ be positive and
$\tilde{t}(M,C)$  be as above.

Then for every $T\in(0,\tilde{t}(M,C)]$,  $\gamma\in \mathbb{E}^\varphi_T\cap B_M$, $y\in\{0,1\}$ and $j\in\mathbb{N}$ it holds
\begin{equation}\label{usefulcone}
\left\lVert \frac{1}{\vert \gamma^i_x(y)\vert^j}\right\rVert_{C^{\frac{1+\alpha}{4}}([0,T])}
\leq \tilde{C}(R,C)\,.
\end{equation}
This can be shown as in Lemma~\ref{regularityest} using the definition of the norm of $C^{\frac{1+\alpha}{4}}([0,T])$, estimate~\eqref{lowbound} and the inequality $[f]_{\frac{1+\alpha}{4},0}\leq [f]_{\frac{3+\alpha}{4},0}$.
\end{rem}

\begin{lemma}\label{welldefN}
Let $T\in(0,\tilde{t}(M,C)]$. Then 
the maps 
\begin{align*}
N_{T,1}:&
\begin{cases}
\mathbb{E}^\varphi_T &\to C  ^{\frac\alpha4,
{^{\alpha}}}([0,T]\times[0,1];(\mathbb{R}^2)^3),\\
\gamma&\mapsto
f(\gamma):=(f^i(\gamma^i))_{i=1,2,3},
\end{cases}\\
N_{T,2}:&
\begin{cases}
\mathbb{E}^\varphi_T&\to  C^{\frac{1+\alpha}{4}}([0,T];\mathbb{R}^4), \\
\gamma&\mapsto b(\gamma)
\end{cases}
\end{align*}
are well defined where the functions $f^i(\gamma^i):=f^i(\gamma^i_{xxxx},\gamma^i_{xxx},\gamma^i_{xx},\gamma^i_x)$ and $b(\gamma):=b(\gamma_{xxx},\gamma_x)$ are defined in the linearisation~\ref{linearisation}.
\end{lemma}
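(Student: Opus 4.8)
The plan is to exhibit each component of $f(\gamma)$ and $b(\gamma)$ as a finite sum of products of functions whose parabolic H\"older regularity I can control, and then to invoke that the relevant H\"older spaces are closed under multiplication, together with the obvious embeddings between them. First I would record the regularity of the building blocks. Since $\gamma\in\mathbb{E}^\varphi_T\subset C^{\frac{4+\alpha}{4},4+\alpha}$, its spatial derivatives satisfy $\gamma^i_{xxxx}\in C^{\frac{\alpha}{4},\alpha}$, $\gamma^i_{xxx}\in C^{\frac{1+\alpha}{4},1+\alpha}$, $\gamma^i_{xx}\in C^{\frac{2+\alpha}{4},2+\alpha}$ and $\gamma^i_x\in C^{\frac{3+\alpha}{4},3+\alpha}$, and since $T\leq\tilde{t}(M,C)\leq 1$ all of these embed continuously into $C^{\frac{\alpha}{4},\alpha}([0,T]\times[0,1])$. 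By Lemma~\ref{regularityest} the curve is regular with $\vert\gamma^i_x\vert\geq C/2$, so the coefficients $1/\vert\gamma^i_x\vert^j$ lie in $C^{\frac{\alpha}{4},\alpha}$ with the stated bounds, while $1/\vert\varphi^i_x\vert^j$ is a fixed function of $x$ alone and hence trivially in $C^{\frac{\alpha}{4},\alpha}$; finally $\tau^i$ and $\nu^i$ arise from composing $\gamma^i_x$ with a map that is smooth on $\{\vert\cdot\vert\geq C/2\}$, so they inherit the regularity $C^{\frac{3+\alpha}{4},3+\alpha}$ of $\gamma^i_x$.

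For $N_{T,1}$ I would use that, by the linearisation in~\ref{linearisation},
\[
f^i(\gamma^i)=\Bigl(\tfrac{2}{\vert\varphi^i_x\vert^4}-\tfrac{2}{\vert\gamma^i_x\vert^4}\Bigr)\gamma^i_{xxxx}+\tilde{f}(\gamma^i_{xxx},\gamma^i_{xx},\gamma^i_x),
\]
where $\tilde{f}$ is, by~\eqref{Apart} and~\eqref{Tang}, a finite sum of terms each of which is a product of spatial derivatives of $\gamma^i$ of order at most three, the unit tangent $\tau^i$, and coefficients of the form $1/\vert\gamma^i_x\vert^j$. The leading coefficient $\tfrac{2}{\vert\varphi^i_x\vert^4}-\tfrac{2}{\vert\gamma^i_x\vert^4}$ lies in $C^{\frac{\alpha}{4},\alpha}$, so multiplying it by $\gamma^i_{xxxx}\in C^{\frac{\alpha}{4},\alpha}$ keeps the product in $C^{\frac{\alpha}{4},\alpha}$ by the algebra property; likewise every summand of $\tilde{f}$ is a product of factors all lying in $C^{\frac{\alpha}{4},\alpha}$ (the derivatives of order $\leq 3$ lie there by embedding). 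Hence $f^i(\gamma^i)\in C^{\frac{\alpha}{4},\alpha}([0,T]\times[0,1];\mathbb{R}^2)$ and $N_{T,1}$ is well defined.

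For $N_{T,2}$ I would pass to the boundary traces. At fixed $y\in\{0,1\}$ the map $t\mapsto\gamma^i_{xxx}(t,y)$ belongs to $C^{\frac{1+\alpha}{4}}([0,T])$ directly from the definition of the parabolic norm, whereas the traces of $\gamma^i_x$, $\tau^i$, $\nu^i$ and of $1/\vert\gamma^i_x\vert^3$ belong to $C^{\frac{3+\alpha}{4}}([0,T])\hookrightarrow C^{\frac{1+\alpha}{4}}([0,T])$, using~\eqref{usefulcone} for the last. Since on $\mathbb{E}_T$ the second order condition forces $\gamma^i_{xx}(t,y)=0$, the terms of the third order condition carrying a factor $\gamma^i_{xx}$ drop out at the boundary, which is exactly why $b$ depends only on $\gamma_{xxx}$ and $\gamma_x$; it is therefore a sum of products of a factor from $C^{\frac{1+\alpha}{4}}$ (the component $\langle\gamma^i_{xxx},\nu^i\rangle$, or the even smoother lower order term $h^i(\gamma^i_x)$) with coefficients from $C^{\frac{3+\alpha}{4}}$. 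As $C^{\frac{1+\alpha}{4}}([0,T])$ is a Banach algebra, each such product, and hence $b(\gamma)$, lies in $C^{\frac{1+\alpha}{4}}([0,T];\mathbb{R}^4)$, so $N_{T,2}$ is well defined.

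The main obstacle is the bookkeeping needed to check that no summand of $\tilde{f}$ or of $b$ falls below the claimed regularity; the only point that genuinely requires care is the composition $\gamma^i_x\mapsto\tau^i,\nu^i$ through the normalisation map, which is smooth only away from the origin. This is precisely where regularity of the curve, guaranteed by Lemma~\ref{regularityest}, enters, and where the uniform lower bound $\vert\gamma^i_x\vert\geq C/2$ is used to bound the H\"older norms of the composed maps.
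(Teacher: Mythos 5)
Your proof is correct and follows essentially the same route as the paper: the paper's own (very brief) proof simply displays the explicit formulas for $f^i$ and $b$ and invokes the Banach--algebra property of the H\"older spaces, which is exactly the decomposition-into-products argument you carry out in more detail. Your additional bookkeeping --- the embeddings of the higher-order derivative spaces into $C^{\frac{\alpha}{4},\alpha}$, Lemma~\ref{regularityest} and estimate~\eqref{usefulcone} for the coefficients $1/\vert\gamma^i_x\vert^j$, and the vanishing of $\gamma^i_{xx}$ at the boundary --- just makes explicit what the paper leaves implicit.
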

\begin{proof}
As $f^i$ and $b$ are given by 
\begin{align*}
f^i(\gamma^i_{xxxx},\gamma^i_{xxx},\gamma^i_{xx},\gamma^i_x)=&\left(\frac{2}{\vert\varphi^i_x\vert^4} -\frac{2}{\vert\gamma^i_x\vert^4}\right)\gamma^i_{xxxx}+12\frac{\gamma_{xxx}\left\langle \gamma_{xx},\gamma_{x}\right\rangle }{\left|\gamma_{x}\right|^{6}}
+5\frac{\gamma_{xx}\left|\gamma_{xx}\right|^{2}}{\left|\gamma_{x}\right|^{6}}\\
&+8\frac{\gamma_{xx}^{i}\left\langle \gamma_{xxx}^{i},\gamma_{x}^{i}\right\rangle }
{\left|\gamma_{x}^{i}\right|^{6}}
-35\frac{\gamma_{xx}^{i}\left\langle \gamma_{xx}^{i},\gamma_{x}^{i}\right\rangle ^{2}}
{\left|\gamma_{x}^{i}\right|^{8}}+\mu\frac{\gamma_{xx}^{i}}{\vert\gamma_x^{i}\vert^2}\,,
\end{align*}
and
\begin{align*}
b(\gamma^i_{xxx},\gamma^i_x)=&-\sum_{i=1}^3\frac{1}{\vert \varphi^i_x\vert^3}
\left\langle \gamma^i_{xxx},\nu_0^i\right\rangle \nu_0^i
+\sum_{i=1}^3\frac{1}{\vert \gamma^i_x\vert^3}
\left\langle \gamma^i_{xxx},\nu^i\right\rangle \nu^i -\frac{\mu}{2}\sum_{i=1}^{3}\frac{\gamma^i_x}{\vert\gamma^i_x\vert}\,,
\end{align*}
the Banach--algebra property of the
H\"older spaces immediately implies that $N_{T,1}$ and $N_{T,2}$ are well--defined.
\end{proof}

\begin{lemma}
The map $N_T$ given by $\gamma\,\mapsto 
(N_{T,1},N_{T,2},\cdot\vert_{t=0})(\gamma)$
is a mapping from $\mathbb{E}^\varphi_T$ to $\mathbb{F}^\varphi_T$.
\end{lemma}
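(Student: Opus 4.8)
The plan is to unravel the definitions. An element of $\mathbb{F}^\varphi_T$ is a triple $(f,b,\varphi)$ with $(f,b,\varphi)\in\mathbb{F}_T$, and for $\gamma\in\mathbb{E}^\varphi_T$ we have $N_T(\gamma)=(N_{T,1}(\gamma),N_{T,2}(\gamma),\gamma_{\vert t=0})=(f(\gamma),b(\gamma),\varphi)$, since $\gamma_{\vert t=0}=\varphi$ by definition of $\mathbb{E}^\varphi_T$. By Lemma~\ref{welldefN} the first two entries already lie in $C^{\frac\alpha4,\alpha}$ and $C^{\frac{1+\alpha}{4}}$ respectively, and $\varphi\in C^{4+\alpha}$ by admissibility (Definition~\ref{adm}), so the space-regularity part of membership is automatic. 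Hence the only thing to verify is that $(f(\gamma),b(\gamma),\varphi)$ satisfies the linear compatibility conditions of Definition~\ref{lincompcond} with $\psi=\varphi$. I would check the four conditions in turn, always evaluating at $t=0$, where $\gamma$ and all its spatial derivatives up to order four coincide with those of $\varphi$ and in particular $\nu^i=\nu_0^i$, $\tau^i=\tau_0^i$.

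The first two conditions, $\varphi^i(y)=\varphi^j(y)$ and $\varphi^i_{xx}(y)=0$, are immediate: every $\gamma\in\mathbb{E}_T$ satisfies the concurrency and second order conditions for all $t\in[0,T]$ by the very definition of $\mathbb{E}_T$, so restricting to $t=0$ yields these equalities for $\varphi$ (they are in any case also part of the admissibility of $\varphi$).

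For the third condition I would evaluate $b(\gamma)$ at $t=0$ using the explicit formula from Lemma~\ref{welldefN}. Since $\gamma(0,\cdot)=\varphi$, the term $\sum_i\frac{1}{\vert \gamma^i_x\vert^3}\langle \gamma^i_{xxx},\nu^i\rangle\nu^i$ coincides at $t=0$ with $\sum_i\frac{1}{\vert \varphi^i_x\vert^3}\langle \varphi^i_{xxx},\nu_0^i\rangle\nu_0^i$ and cancels the first sum, leaving $b(\gamma)(0,y)=-\frac{\mu}{2}\sum_i\tau_0^i(y)$. On the other hand, because $\varphi^i_{xx}(y)=0$ at the junction, the lower order contributions to $k_s$ drop out and the principal coefficient reduces to $\frac{1}{\vert \varphi^i_x\vert^3}\langle \varphi^i_{xxx},\nu_0^i\rangle=k_{0,s}^i(y)$, so the required identity $-\sum_i\frac{1}{\vert \varphi^i_x\vert^3}\langle \varphi^i_{xxx}(y),\nu_0^i\rangle\nu_0^i=b(\gamma)(0,y)$ becomes $\sum_i\big(2k_{0,s}^i\nu_0^i-\mu\tau_0^i\big)=0$, which is exactly the geometric third order condition satisfied by $\varphi$ through admissibility. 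Equivalently, $b$ was constructed precisely so that the linearised third order condition is equivalent to the full third order condition~\eqref{thirdorder}, which $\varphi$ satisfies.

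The fourth condition I would reduce to the velocity-matching compatibility condition built into admissibility. From the way $f^i$ arises by rewriting the motion equation in~\ref{linearisation}, one has $\frac{2}{\vert \varphi^i_x\vert^4}\gamma^i_{xxxx}-f^i(\gamma)=A^i\nu^i+T^i\tau^i$, so at $t=0$ the left-hand side of the fourth condition equals $A_0^i(y)\nu_0^i(y)+T_0^i(y)\tau_0^i(y)$. The requirement that these agree for all $i,j\in\{1,2,3\}$ is then literally the compatibility condition $A_0^i\nu_0^i+T_0^i\tau_0^i=A_0^j\nu_0^j+T_0^j\tau_0^j$ contained in Definition~\ref{adm}, which completes the verification. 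The only genuinely nontrivial step is the third condition: one has to spot the cancellation of the two third order sums in $b$ at $t=0$ and use $\varphi^i_{xx}(y)=0$ to identify the principal coefficient with $k_{0,s}^i$; everything else is a matter of matching the algebraic compatibility conditions of Definition~\ref{lincompcond} to the geometric conditions already encoded in the admissibility of $\varphi$.
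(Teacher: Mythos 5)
Your proof is correct and follows essentially the same route as the paper: verify the linear compatibility conditions of Definition~\ref{lincompcond} with $\psi=\varphi$ by evaluating at $t=0$, where the two third-order sums in $b$ cancel, leaving $-\frac{\mu}{2}\sum_i\tau_0^i$, which is matched to $-\sum_i\frac{1}{\vert\varphi^i_x\vert^3}\langle\varphi^i_{xxx},\nu_0^i\rangle\nu_0^i$ via the geometric third order condition, and where the fourth condition reduces to the velocity-matching identity $A_0^i\nu_0^i+T_0^i\tau_0^i=A_0^j\nu_0^j+T_0^j\tau_0^j$ built into Definition~\ref{adm}. Your version is in fact slightly more explicit than the paper's, spelling out the identification $k_{0,s}^i=\frac{1}{\vert\varphi^i_x\vert^3}\langle\varphi^i_{xxx},\nu_0^i\rangle$ under the second order condition, which the paper leaves implicit.
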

\begin{proof}
We have only to show that $\varphi$ satisfies the
linear compatibility conditions with respect to $(N_{T,1},N_{T,2})(\gamma)$.
As $\varphi$ satisfies the third order condition, we have for $y\in\{0,1\}$
\begin{align*}
b(\gamma^i_{xxx},\gamma^i_x)(0,y)&=
-\sum_{i=1}^3\frac{1}{\vert \varphi^i_x\vert^3}
\left\langle \varphi^i_{xxx},\nu_0^i\right\rangle \nu_0^i
+\sum_{i=1}^3\frac{1}{\vert \varphi^i_x\vert^3}
\left\langle \varphi^i_{xxx},\nu_0^i\right\rangle \nu_0^i 
-\frac{\mu}{2}\sum_{i=1}^{3}\frac{\varphi^i_x}{\vert\varphi^i_x\vert}\\
&=-\sum_{i=1}^3\frac{1}{\vert \varphi^i_x\vert^3}
\left\langle \varphi^i_{xxx},\nu_0^i\right\rangle \nu_0^i\,.
\end{align*}
Moreover, the admissible initial parametrisation $\varphi$ satisfies for $i,j\in\{1,2,3\}$ and $y\in\{0,1\}$
\begin{align*}
\frac{2}{\vert \varphi^i_x\vert^4}\varphi^i_{xxxx}(y)-f^i(\gamma^i)(0,y)&=
A_0^{i}(y)\nu_0^{i}(y)+T_0^{i}(y)\tau_0^{i}(y)=A_0^{j}(y)\nu_0^{j}(y)+T_0^{j}(y)\tau_0^{j}(y)\\
&=\frac{2}{\vert \varphi^j_x\vert^4}\varphi^j_{xxxx}(y)
-f^j(\gamma^j)(0,y)\,.
\end{align*}
\end{proof}

\subsubsection{Contraction estimates}

\begin{dfnz}
We define the mapping $K_T:\mathbb{E}^\varphi_T\to\mathbb{E}^\varphi_T$
as $K_T:=L_T^{-1}N_T$.
\end{dfnz}
Clearly the map $K_T$ is well defined
for all $T\in (0,\tilde{t}(M,C)]$
since $L_T^{-1}(\mathbb{F}_T^\varphi)\subset \mathbb{E}^\varphi_T$.

\begin{lemma}\label{notproved}
Let $k\in\{1,2,3\}$, $T\in [0,1]$ and $\gamma,\tilde{\gamma}\in\mathbb{E}_T^\varphi$. We denote by 
$\gamma^{(4-k)}, \tilde{\gamma}^{(4-k)}$ the 
$(4-k)$th  space derivative of $\gamma$ and $\tilde{\gamma}$, respectively.
Then there exist $\varepsilon_k>0$ and a constant $\tilde{C}$ independent of $T$ such that
$$
\left\lVert
\gamma^{(4-k)}-\tilde{\gamma}^{(4-k)}
\right\rVert_{C^{\frac{\alpha}{4},\alpha}}
\leq 
\tilde{C}T^{\varepsilon_k}\left\lVert
\gamma^{(4-k)}-\tilde{\gamma}^{(4-k)}
\right\rVert_{C^{\frac{k+\alpha}{4},k+\alpha}}
\leq \tilde{C}T^{\varepsilon_k}\left\lVert \gamma-\tilde{\gamma}
\right\rVert_{\mathbb{E}_T}\,.
$$
\end{lemma}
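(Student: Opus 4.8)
The plan is to establish the two stated inequalities separately. The second inequality is essentially immediate: since $\gamma^{(4-k)}-\tilde\gamma^{(4-k)}$ is the $(4-k)$th spatial derivative of $\gamma-\tilde\gamma$, the parabolic H\"older norm $\lVert\gamma^{(4-k)}-\tilde\gamma^{(4-k)}\rVert_{C^{\frac{k+\alpha}{4},k+\alpha}}$ is controlled by the full norm $\lVert\gamma-\tilde\gamma\rVert_{C^{\frac{4+\alpha}{4},4+\alpha}}=\lVert\gamma-\tilde\gamma\rVert_{\mathbb{E}_T}$, simply because differentiating $4-k$ times in space drops the parabolic regularity index from $4+\alpha$ to $k+\alpha$, and each summand appearing in the $C^{\frac{k+\alpha}{4},k+\alpha}$ norm of a spatial derivative of order $4-k$ is one of the summands controlled by the $\mathbb{E}_T$ norm of the original function. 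So the real content is the first inequality, namely gaining a positive power $T^{\varepsilon_k}$ when passing from the $C^{\frac{k+\alpha}{4},k+\alpha}$ norm down to the weaker $C^{\frac{\alpha}{4},\alpha}$ norm.

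First I would set $w:=\gamma^{(4-k)}-\tilde\gamma^{(4-k)}$ and observe the crucial structural fact: since $\gamma,\tilde\gamma\in\mathbb{E}^\varphi_T$ share the same initial datum $\varphi$, we have $w(0,\cdot)=0$, i.e.\ $w$ vanishes at $t=0$. This is exactly the hypothesis that makes Lemma~\ref{notprovedbis} applicable in the time variable. The strategy is to interpolate between the two parabolic H\"older norms and to exploit that $w$, together with the relevant derivatives of $w$, starts at zero. Concretely, the $C^{\frac{\alpha}{4},\alpha}$ norm of $w$ is the sum of the sup norm $\lVert w\rVert_\infty$, the spatial seminorm $[w]_{0,\alpha}$, and the time seminorm $[w]_{\frac{\alpha}{4},0}$. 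Each of these must be bounded by $T^{\varepsilon_k}$ times the corresponding higher-order quantities contained in $\lVert w\rVert_{C^{\frac{k+\alpha}{4},k+\alpha}}$.

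The main technical step, and where I expect the one genuine obstacle to lie, is extracting the factor $T^{\varepsilon_k}$ for each piece. For the time seminorm and the sup norm this is the cleaner case: because $w(0,\cdot)=0$, Lemma~\ref{notprovedbis} gives directly $[w]_{\frac{\alpha}{4},0}\leq 2T^{\frac{k}{4}}[w]_{\frac{k+\alpha}{4},0}$ and $\lVert w\rVert_\infty\leq T^{\frac{k+\alpha}{4}}[w]_{\frac{k+\alpha}{4},0}$, viewing $w(\cdot,x)$ as a map into $C^0$ (or applying the lemma pointwise in $x$). The delicate part is the spatial seminorm $[w]_{0,\alpha}$, which on its own carries no smallness in $T$: one must first use that $w(0,x)=0$ for all $x$, hence $[w(0,\cdot)]_{0,\alpha}=0$, and then interpolate the map $t\mapsto [w(t,\cdot)]_{0,\alpha}$ in time, again invoking that it vanishes at $t=0$ together with the H\"older-in-time control coming from the mixed regularity of $w$. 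This is the standard but slightly fiddly argument that the spatial H\"older seminorm is itself H\"older continuous in time with the parabolic exponent, so that its value at time $t$ is bounded by $t^{\varepsilon}$ times a seminorm appearing in the full norm. I would therefore choose $\varepsilon_k$ to be the smallest of the three gained exponents (something like $\varepsilon_k=\tfrac{k}{4}$ up to $\alpha$-corrections, and for $k=1$ one checks $\varepsilon_1>0$ so that the subsequent contraction argument closes), absorbing all universal interpolation constants and the factor from Lemma~\ref{notprovedbis} into a single $\tilde C$ independent of $T\in[0,1]$. Collecting the three estimates and summing yields the first inequality, and combining with the trivial second inequality completes the proof.
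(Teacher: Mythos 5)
Your proposal is correct, and it rests on the same two essential ingredients as the paper's proof --- the observation that $w:=\gamma^{(4-k)}-\tilde{\gamma}^{(4-k)}$ vanishes identically at $t=0$ because both parametrisations start at $\varphi$, and an interpolation argument trading spatial regularity for a positive power of $T$ --- but you package them genuinely differently. The paper never splits the parabolic norm: it introduces the space $X_T^k$ of functions vanishing at the initial time, invokes Lunardi's interpolation theory (\cite[Propositions 1.1.3 and 1.1.4]{lunardi1}, i.e.\ that $C^{\theta_k(k+\alpha)}([0,1])$ is of class $J_{\theta_k}$ between $C([0,1])$ and $C^{k+\alpha}([0,1])$) to obtain the $T$-uniform embedding $X_T^k\hookrightarrow C_0^{\beta_k}([0,T];C^{\alpha}([0,1]))$ with $\beta_k=(1-\theta_k)\frac{k+\alpha}{4}>\frac{\alpha}{4}$, and then applies Lemma~\ref{notprovedbis} once, getting $\varepsilon_k=\beta_k-\frac{\alpha}{4}$. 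Your component-wise route is more elementary for two of the three pieces: the bounds $\lVert w\rVert_\infty\le T^{\frac{k+\alpha}{4}}[w]_{\frac{k+\alpha}{4},0}$ and $[w]_{\frac{\alpha}{4},0}\le T^{\frac{k}{4}}[w]_{\frac{k+\alpha}{4},0}$ need nothing beyond the definitions and $w(0,\cdot)=0$, and these seminorms are indeed among those making up the $C^{\frac{k+\alpha}{4},k+\alpha}$ norm. The step you call ``standard but slightly fiddly'' --- the spatial seminorm $[w]_{0,\alpha}$ --- is, however, exactly where the paper's appeal to Lunardi does its work, so it deserves to be written out rather than gestured at: apply the interpolation inequality $[v]_{C^{\alpha}([0,1])}\le C\lVert v\rVert_{C^{0}([0,1])}^{\frac{k}{k+\alpha}}\lVert v\rVert_{C^{k+\alpha}([0,1])}^{\frac{\alpha}{k+\alpha}}$ (whose constant is independent of $T$, the spatial domain being fixed) to $v=w(t,\cdot)$, using $w(0,\cdot)=0$, $\lVert w(t,\cdot)\rVert_{C^0}\le t^{\frac{k+\alpha}{4}}[w]_{\frac{k+\alpha}{4},0}$ and $\sup_t\lVert w(t,\cdot)\rVert_{C^{k+\alpha}}\le\lVert w\rVert_{C^{\frac{k+\alpha}{4},k+\alpha}}$, to conclude $[w(t,\cdot)]_{C^\alpha}\le C\,t^{\frac{k}{4}}\lVert w\rVert_{C^{\frac{k+\alpha}{4},k+\alpha}}$. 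With that inequality made explicit your argument closes, with $\varepsilon_k=\frac{k}{4}$ (in fact slightly better than the paper's $\frac{k-\alpha}{4}$, though only positivity matters for the contraction), and your justification of the second inequality --- every seminorm of a $(4-k)$-th spatial derivative at parabolic level $k+\alpha$ occurs among the seminorms defining the $\mathbb{E}_T$-norm --- makes explicit what the paper leaves implicit.
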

\begin{proof}
We let $k\in\{1,2,3\}$ be fixed and define 
$$
X_{T}^k:=\{g\in C^{\frac{k+\alpha}{4}, k+\alpha}\left([0,T]\times[0,1]\right):g(0,x)=0\text{ for all }x\in[0,1]\}.
$$
Note that for $\gamma,\tilde{\gamma}\in\mathbb{E}_T^\varphi$ we have that 
$\left(\gamma^{(4-k)}-\tilde{\gamma}^{(4-k)}\right)\in X_T^k$.
We fix some $\theta_k\in \left[\frac{\alpha}{k+\alpha},\frac{k}{k+\alpha}\right)$
such that $\theta_k(k+\alpha)$ is not an integer and let $\beta_k:=(1-\theta_k)\frac{k+\alpha}{4} > \frac{\alpha}{4}$.
By~\cite[Proposition 1.1.3]{lunardi1} the space $C^{\theta_k(k+\alpha)}([0,1])$ belongs to the class $J_{\theta_k}$ between $C([0,1])$ and $C^{k+\alpha}([0,1])$. Using \cite[Proposition 1.1.4]{lunardi1} this implies
$$
B\left([0,T];C^{k+\alpha}([0,1])\right)\cap C^{\frac{k+\alpha}{4}}\left([0,T];C([0,1])\right)\hookrightarrow C^{\beta_k}\left([0,T];C^{\theta_k(k+\alpha)}([0,1])\right)
$$
and hence also
$$
X_T^k\hookrightarrow C^{\beta_k}_0([0,T];C^{\theta_k(k+\alpha)}([0,1]))\hookrightarrow C^{\beta_k}_0([0,T];C^{\alpha}([0,1])) \hookrightarrow C^{\frac{\alpha}{4}}_0([0,T];C^{\alpha}([0,1])),
$$
where 
\[C^\delta_0\left([0,T];C^\mu([0,1])\right):=\{g\in C^\delta_0\left([0,T];C^\mu([0,1])\right):g(0,x)=0\text{ for all }x\in[0,1]\}\,.\]
Here all embedding constants are independent of $T\in[0,1]$. In particular we find for all $\gamma,\tilde{\gamma}\in \mathbb{E}_T^\varphi$ with $\varepsilon_k:=\beta_k-\frac{\alpha}{4}$ that
\begin{align*}
\Vert\gamma^{(4-k)}-\tilde{\gamma}^{(4-k)}\Vert_{C^{\frac{\alpha}{4},\alpha}}&
\leq \Vert\gamma^{(4-k)}-\tilde{\gamma}^{(4-k)}\Vert_{C^{\frac{\alpha}{4}}([0,T];C^\alpha[0,1])}\\
&\leq
T^{\varepsilon_k} 
\Vert\gamma^{(4-k)}
-\tilde{\gamma}^{(4-k)}
\Vert_{C^{\beta_k}([0,T];C^\alpha[0,1])}\\
&\leq T^{\varepsilon_k}\Vert\gamma^{(4-k)}-\tilde{\gamma}^{(4-k)}\Vert_{C^{\beta_k}([0,T];C^{\theta_k(k+\alpha)}([0,1])}\\
&\leq \tilde{C} T^{\varepsilon_k}\Vert\gamma^{(4-k)}-\tilde{\gamma}^{(4-k)}\Vert_{X_T^k}.
\end{align*}
\end{proof}


\begin{lemma}\label{uniformestimateestension}
For all $T_0>0$ there exists a constant $c(T_0)$ such that
$$
\sup_{T\in (0,T_0]}\Vert L_T^{-1}\Vert_{\mathcal{L}(\mathbb{F}_T,\mathbb{E}_T)}\leq c(T_0)\,.
$$
\end{lemma}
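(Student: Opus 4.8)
The plan is to reduce the uniform bound to the single fixed-interval estimate of Theorem~\ref{linearexistence} on the largest interval $[0,T_0]$, using a time-extension / restriction argument. Fix $T\in(0,T_0]$ and let $(f,b,\psi)\in\mathbb{F}_T$ be arbitrary with $\gamma:=L_T^{-1}(f,b,\psi)$. First I would continue the right-hand side beyond $t=T$ by the constant-in-time extension
\[
\widetilde f(t,x):=f(\min\{t,T\},x),\qquad \widetilde b(t):=b(\min\{t,T\}),
\]
defined on $[0,T_0]$, while leaving the initial datum $\psi$ (a function of $x$ alone) unchanged. The key elementary fact is that this extension is linear and preserves each of the parabolic Hölder seminorms with a constant $\le 1$: for two times lying both below or both above $T$ the difference quotient is that of the original function or vanishes, and for $s\le T<t$ one uses $|s-T|\le|s-t|$, so $\Vert\widetilde f\Vert_{C^{\alpha/4,\alpha}([0,T_0]\times[0,1])}\le\Vert f\Vert_{C^{\alpha/4,\alpha}([0,T]\times[0,1])}$ and similarly for $\widetilde b$ in $C^{(1+\alpha)/4}$. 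In particular the extension norm is bounded independently of $T$, which is exactly what the $T$-independent conclusion requires.

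Next I would check that $(\widetilde f,\widetilde b,\psi)\in\mathbb{F}_{T_0}$. This amounts to verifying the linear compatibility conditions of Definition~\ref{lincompcond}, and here the crucial observation is that all four conditions only constrain the traces $f(0,\cdot)$, $b(0,\cdot)$ together with $\psi$ and its spatial derivatives at $y\in\{0,1\}$. Since the constant-in-time continuation fixes the value at $t=0$, we have $\widetilde f(0,\cdot)=f(0,\cdot)$ and $\widetilde b(0)=b(0)$, so the compatibility conditions are inherited verbatim from $(f,b,\psi)\in\mathbb{F}_T$. Hence Theorem~\ref{linearexistence} applies on $[0,T_0]$ and produces $\widetilde\gamma:=L_{T_0}^{-1}(\widetilde f,\widetilde b,\psi)\in\mathbb{E}_{T_0}$ with $\Vert\widetilde\gamma\Vert_{\mathbb{E}_{T_0}}\le C(T_0)\,\Vert(\widetilde f,\widetilde b,\psi)\Vert_{\mathbb{F}_{T_0}}$, the constant $C(T_0)$ being the one furnished by that theorem on the fixed interval $[0,T_0]$.

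It remains to identify the restriction $\widetilde\gamma|_{[0,T]}$ with $\gamma$ and to chain the estimates. Restricting to $[0,T]$ preserves the regularity as well as the concurrency and second order boundary conditions, so $\widetilde\gamma|_{[0,T]}\in\mathbb{E}_T$; moreover it solves the linear system~\eqref{lyntheta} on $[0,T]$ with data $(\widetilde f|_{[0,T]},\widetilde b|_{[0,T]},\psi)=(f,b,\psi)$, whence by the uniqueness part of Theorem~\ref{linearexistence} we get $\widetilde\gamma|_{[0,T]}=L_T^{-1}(f,b,\psi)=\gamma$. Since all parabolic Hölder sup-norms and seminorms over the subinterval $[0,T]$ are dominated by those over $[0,T_0]$, we conclude
\[
\Vert\gamma\Vert_{\mathbb{E}_T}
=\Vert\widetilde\gamma|_{[0,T]}\Vert_{\mathbb{E}_T}
\le\Vert\widetilde\gamma\Vert_{\mathbb{E}_{T_0}}
\le C(T_0)\,\Vert(\widetilde f,\widetilde b,\psi)\Vert_{\mathbb{F}_{T_0}}
\le C(T_0)\,\Vert(f,b,\psi)\Vert_{\mathbb{F}_T},
\]
so that $\Vert L_T^{-1}\Vert_{\mathcal{L}(\mathbb{F}_T,\mathbb{E}_T)}\le C(T_0)$ for every $T\in(0,T_0]$, giving the claim with $c(T_0):=C(T_0)$. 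The only delicate points are the two requirements that the extension be simultaneously norm-bounded uniformly in $T$ and compatibility-preserving, and I expect these to be the main thing to get right; both are settled precisely by choosing the constant-in-time continuation, because it controls the seminorms and, being trivial at $t=0$, leaves the compatibility conditions untouched.
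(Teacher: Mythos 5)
Your proof is correct, and it follows the same overall strategy as the paper: extend the data from $[0,T]$ to the fixed interval $[0,T_0]$ by a linear, compatibility-preserving extension operator of norm at most one, apply Theorem~\ref{linearexistence} on $[0,T_0]$, identify the restriction of the extended solution with $L_T^{-1}(f,b,\psi)$ via the uniqueness part of that theorem, and conclude by monotonicity of the parabolic H\"older norms under restriction in time. The one genuine difference is the choice of extension, and it is worth recording. The paper extends by replaying the data backwards in time, setting $E_1f(t):=f\bigl(T\tfrac{T_0-t}{T_0-T}\bigr)$ for $t\in(T,T_0]$, and asserts that this operator has norm at most one; however, the inner time rescaling has Lipschitz constant $\tfrac{T}{T_0-T}$, which exceeds one as soon as $T>T_0/2$, so on the replayed part the H\"older seminorms pick up a factor $\bigl(\tfrac{T}{T_0-T}\bigr)^{\rho}$ that blows up as $T\to T_0$, and the norm-one claim is not actually uniform in $T$ as written. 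Your constant-in-time continuation $\widetilde f(t,x)=f(\min\{t,T\},x)$ avoids this issue entirely: seminorms over pairs of times in the frozen region vanish, mixed pairs $s\le T<t$ are handled by $|T-s|\le|t-s|$, and spatial seminorms are unchanged, so the bound by one is exact and uniform in $T$, which is precisely what the lemma needs. Both extensions fix the data on $[0,T]$, in particular at $t=0$, and hence both inherit the linear compatibility conditions of Definition~\ref{lincompcond}, which, as you correctly observe, only constrain the traces $f(0,\cdot)$, $b(0,\cdot)$ and the spatial derivatives of $\psi$ at $y\in\{0,1\}$; so membership of the extended data in $\mathbb{F}_{T_0}$ holds in both arguments. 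In short, your route is the same as the paper's but with a more robust extension operator that makes the key norm estimate elementary.
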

\begin{proof}
For $(f,b,\psi)\in\mathbb{F}_T$ we define the extension 
$E(f,b,\psi)$ as
$E(f,b,\psi):=(E_1f, E_2b,\psi)$ with
\begin{align*}
E_1f(t):=
\begin{cases}
f(t) &\text{for}\, t\in[0,T],\\
f\left(T\frac{T_0-t}{T_0-T}\right) &\text{for}\, t\in(T,T_0],\\
\end{cases}
\qquad
E_2b(t):=
\begin{cases}
b(t) &\text{for}\, t\in[0,T],\\
b\left(T\frac{T_0-t}{T_0-T}\right) &\text{for}\, t\in(T,T_0]\,.
\end{cases}
\end{align*}
It is clear that $E(f,b,\psi)\in\mathbb{F}_{T_0}$ and that 
$\Vert E \Vert_{\mathcal{L}(\mathbb{F}_T,\mathbb{F}_{T_0})}\leq 1$.

As $L^{-1}_{T_0}(E(f,b,\psi))_{\vert [0,T]}=L^{-1}_T(f,b,\psi)$
we find that
\begin{align*}
\Vert L^{-1}_T(f,b,\psi) \Vert_{\mathbb{E}_T}
&\leq \Vert L^{-1}_{T_0}(E(f,b,\psi)) \Vert_{\mathbb{E}_{T_0}}\\
&\leq \Vert L_{T_0}^{-1}\Vert_{\mathcal{L}(\mathbb{F}_{T_0},\mathbb{E}_{T_0})}
\Vert E(f,b,\psi)\Vert_{\mathbb{F}_{T_0}}\leq c(T_0)\Vert (f,b,\psi)\Vert_{\mathbb{F}_{T}}\,.
\end{align*}

\end{proof}
From now on we fix $T_0\in (0,1]$.

\begin{prop}\label{contraction}
For any positive radius $M$ there exists a time $T(M)\in (0,
\min\{
\tilde{t}(M,C)
,T_0\}
)$ such that for all $T\in (0,T(M)]$
the map 
$K_T:\mathbb{E}^\varphi_T\cap \overline{B_M}\to\mathbb{E}^\varphi_T$ 
is a contraction.
\end{prop}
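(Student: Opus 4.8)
The plan is to show that $K_T=L_T^{-1}N_T$ is a contraction on $\mathbb{E}^\varphi_T\cap\overline{B_M}$ for $T$ small by exploiting the uniform bound on $\|L_T^{-1}\|$ from Lemma~\ref{uniformestimateestension} together with a Lipschitz estimate on $N_T$ whose constant carries a positive power of $T$. Concretely, for $\gamma,\tilde\gamma\in\mathbb{E}^\varphi_T\cap\overline{B_M}$ we have
\[
\|K_T(\gamma)-K_T(\tilde\gamma)\|_{\mathbb{E}_T}
=\|L_T^{-1}\bigl(N_T(\gamma)-N_T(\tilde\gamma)\bigr)\|_{\mathbb{E}_T}
\leq c(T_0)\,\|N_T(\gamma)-N_T(\tilde\gamma)\|_{\mathbb{F}_T}\,,
\]
since $N_T(\gamma)$ and $N_T(\tilde\gamma)$ share the same initial datum $\varphi$, so their difference has vanishing third component and the $\mathbb{F}_T$-norm reduces to the norms of $N_{T,1}(\gamma)-N_{T,1}(\tilde\gamma)$ and $N_{T,2}(\gamma)-N_{T,2}(\tilde\gamma)$. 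The whole task is therefore to bound these two differences by $\tilde{C}\,T^{\varepsilon}\|\gamma-\tilde\gamma\|_{\mathbb{E}_T}$ for some $\varepsilon>0$; then choosing $T(M)$ so small that $c(T_0)\tilde{C}\,T(M)^{\varepsilon}<1$ (and $T(M)\le\min\{\tilde t(M,C),T_0\}$) gives the contraction.

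The key mechanism for producing the factor $T^{\varepsilon}$ is Lemma~\ref{notproved}: since $\gamma,\tilde\gamma\in\mathbb{E}^\varphi_T$ agree at $t=0$, their lower-order space derivatives $\gamma^{(4-k)}-\tilde\gamma^{(4-k)}$ for $k\in\{1,2,3\}$ satisfy
\[
\|\gamma^{(4-k)}-\tilde\gamma^{(4-k)}\|_{C^{\frac\alpha4,\alpha}}
\leq \tilde{C}\,T^{\varepsilon_k}\|\gamma-\tilde\gamma\|_{\mathbb{E}_T}\,.
\]
The differences of the top-order terms ($k=0$, i.e.\ the fourth derivative) do \emph{not} gain a power of $T$, so one must arrange that wherever $\gamma^i_{xxxx}$ appears it is multiplied by a coefficient difference that itself is small. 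Looking at the explicit form of $f^i$ recorded in Lemma~\ref{welldefN}, the only fourth-order contribution is $\bigl(\tfrac{2}{|\varphi^i_x|^4}-\tfrac{2}{|\gamma^i_x|^4}\bigr)\gamma^i_{xxxx}$. I would split this as a telescoping difference over $\gamma$ versus $\tilde\gamma$, writing it schematically as a coefficient difference times $\gamma^i_{xxxx}$ plus a common coefficient times $(\gamma^i_{xxxx}-\tilde\gamma^i_{xxxx})$; the first piece is controlled because $\tfrac{1}{|\gamma^i_x|^4}-\tfrac{1}{|\tilde\gamma^i_x|^4}$ involves only first derivatives and hence gains $T^{\varepsilon_3}$ by Lemma~\ref{notproved}, while in the second piece the common coefficient $\tfrac{2}{|\varphi^i_x|^4}-\tfrac{2}{|\gamma^i_x|^4}$ is small because $\gamma_x(0,\cdot)=\varphi_x$, so it too vanishes at $t=0$ and one again extracts a positive power of $T$.

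All the remaining terms in $f^i$ and in $b$ are polynomial expressions in $\gamma^i_x,\gamma^i_{xx},\gamma^i_{xxx}$ and in the bounded factors $|\gamma^i_x|^{-j}$ (uniformly bounded by Lemma~\ref{regularityest} and estimate~\eqref{usefulcone}), involving only space derivatives of order at most three. Here I would use the multiplicativity (Banach-algebra property) of the parabolic Hölder norms to reduce each difference to a sum of products in which exactly one factor is a difference of $\gamma$ and $\tilde\gamma$ of order $\le 3$, with the other factors bounded uniformly on $\overline{B_M}$; each such difference-factor then gains a power $T^{\varepsilon_k}$ from Lemma~\ref{notproved}, and the Lipschitz differences of the geometric quantities $\nu^i,\tau^i$ are handled the same way since $\nu^i=\nu^i(\gamma^i_x)$ depends smoothly on the bounded quantity $\gamma^i_x/|\gamma^i_x|$. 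The main obstacle is the careful bookkeeping of the top-order term: one has to be sure that the coefficient in front of $\gamma^i_{xxxx}$ is always either a \emph{difference} of such coefficients or a coefficient that itself is $O(T^{\varepsilon})$ at $t=0$, so that no genuinely fourth-order difference survives without a compensating small factor. Once this is verified, setting $\varepsilon:=\min_k\varepsilon_k>0$ and collecting constants yields
\[
\|N_T(\gamma)-N_T(\tilde\gamma)\|_{\mathbb{F}_T}\leq C(M,R,C)\,T^{\varepsilon}\,\|\gamma-\tilde\gamma\|_{\mathbb{E}_T}\,,
\]
and choosing $T(M)$ small enough that $c(T_0)\,C(M,R,C)\,T(M)^{\varepsilon}<1$ completes the proof.
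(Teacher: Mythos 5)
Your proposal is correct and follows essentially the same route as the paper's proof: the same splitting of the top-order term into $\bigl(\tfrac{2}{\vert\varphi^i_x\vert^4}-\tfrac{2}{\vert\gamma^i_x\vert^4}\bigr)\bigl(\gamma^i_{xxxx}-\tilde{\gamma}^i_{xxxx}\bigr)+\bigl(\tfrac{2}{\vert\tilde{\gamma}^i_x\vert^4}-\tfrac{2}{\vert\gamma^i_x\vert^4}\bigr)\tilde{\gamma}^i_{xxxx}$, the same use of Lemma~\ref{notproved} (and the vanishing of differences at $t=0$) to extract the factor $T^{\varepsilon}$, the Banach-algebra bookkeeping for the lower-order and boundary terms, and the uniform bound on $L_T^{-1}$ from Lemma~\ref{uniformestimateestension} to close the contraction estimate. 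No gaps beyond the level of detail one would expect in a sketch.
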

\begin{proof}
We fix a positive radius $M$ and let $T$ be smaller than $
\min\{
\tilde{t}(M,C),T_0\}$.	
For $\gamma,\tilde{\gamma}\in\mathbb{E}_T^\varphi\cap \overline{B_M}$ we now estimate the
different components of the expression
$\Vert N_T(\gamma)-N_T(\tilde{\gamma})\Vert_{\mathbb{F}_T},$
namely
\begin{align*}
&\Vert N_{T,1}(\gamma)-N_{T,1}(\tilde{\gamma})\Vert_{C^{\frac{\alpha}{4}},\alpha} =
\Vert f(\gamma)-f(\tilde{\gamma})\Vert_{C^{\frac{\alpha}{4}},\alpha},\\
&\Vert N_{T,2}(\gamma)-N_{T,2}(\tilde{\gamma})\Vert_{C^{\frac{1+\alpha}{4}}}
=\Vert b(\gamma)-b(\tilde{\gamma})\Vert_{C^{\frac{1+\alpha}{4}}}.
\end{align*}

{\textbf{\textit{Step 1: Estimate on 
$\Vert f(\gamma)-f(\tilde{\gamma})\Vert_{C^{\frac{\alpha}{4}},\alpha}$}}}

The highest order term is given by
\begin{equation*}
\left(\frac{2}{\vert\varphi^i_x\vert^4} -\frac{2}{\vert\gamma^i_x\vert^4}\right)
\left(\gamma^i_{xxxx}-\tilde{\gamma}^i_{xxxx}\right)+
\left(\frac{2}{\vert\tilde{\gamma}^i_x\vert^4} -\frac{2}{\vert\gamma^i_x\vert^4}\right)
\tilde{\gamma}^i_{xxxx}.
\end{equation*}

Combining the identity 
$$
\frac{1}{\vert a \vert^4} -\frac{1}{\vert b \vert^4}=
\left(\vert b \vert-\vert a \vert \right)
\left(\frac{1}{\vert a \vert^2\vert b \vert}
+\frac{1}{\vert a \vert\vert b \vert^2}\right)
\left( \frac{1}{\vert a \vert^2}
+\frac{1}{\vert b \vert^2}\right)\,
$$
with Lemma~\ref{regularityest} and~\ref{notproved} we get 
\begin{align*}
\left\lVert
\left(\frac{2}{\vert\varphi^i_x\vert^4} -\frac{2}{\vert\gamma^i_x\vert^4}\right)\left(\gamma^i_{xxxx}
-\tilde{\gamma}^i_{xxxx}\right)
\right\rVert_{C^{\alpha,\frac{\alpha}{4}}}
\leq
\tilde{C}(M,C,R)T^{\varepsilon_3}
\Vert \gamma-\tilde{\gamma}\Vert_{\mathbb{E}_T}
\end{align*}
and 
\begin{align}\label{fractionestimate}
\left\lVert
\left(\frac{2}{\vert\tilde{\gamma}^i_x\vert^4} -\frac{2}{\vert\gamma^i_x\vert^4}\right)
\tilde{\gamma}^i_{xxxx}
\right\rVert_{C^{\alpha,\frac{\alpha}{4}}}
\leq
\tilde{C}(M,C)T^{\varepsilon_3}
\Vert \gamma-\tilde{\gamma}\Vert_{\mathbb{E}_T}\,.
\end{align}
All the other terms of  $ f(\gamma)-f(\tilde{\gamma})$ are of the  form
\begin{equation}\label{shape}
\frac{a\left\langle b,c\right\rangle }{\vert d\vert^j}
-\frac{\tilde{a}\langle \tilde{b},\tilde{c}\rangle }{\vert\tilde{d}\vert^{j}}
\end{equation}
with $j\in\{2,6,8\}$ 
and with $a,b,c,d,\tilde{a},\tilde{b},\tilde{c},\tilde{d}$ spacial derivatives up to order three
of $\gamma^i$ and $\tilde{\gamma}^i$, respectively.

Adding and subtracting the expression
$$
\frac{\tilde{a}\left\langle b,c\right\rangle }{\vert d\vert^j}+\frac{\tilde{a}\left\langle \tilde{b},c\right\rangle }{\vert d\vert^j}+
\frac{\tilde{a}\left\langle \tilde{b},\tilde{c}\right\rangle }{\vert d\vert^j}
$$
to~\eqref{shape},
we get 
\begin{equation}\label{lowerorder}
\frac{(a-\tilde{a})\left\langle b,c\right\rangle }{\vert d\vert^j}+
\frac{\tilde{a}\left\langle (b-\tilde{b}),c\right\rangle }{\vert d\vert^j}
+\frac{\tilde{a}\left\langle \tilde{b},(c-\tilde{c})\right\rangle }{\vert d\vert^j}
+\left(\frac{1}{\vert d\vert^j}-\frac{1}{\vert \tilde{d}\vert^j}\right)
\tilde{a}\left\langle \tilde{b},\tilde{c}\right\rangle\,.
\end{equation}
With the help of Lemma~\ref{notproved} we can estimate the first term of~\eqref{lowerorder} 
in the following way:
\begin{align*}
\left\lVert \frac{(a-\tilde{a})\left\langle b,c\right\rangle }{\vert d\vert^j}
\right\rVert_{C^{\frac{\alpha}{4},\alpha}}
\leq \tilde{C}(M,C)\Vert a-\tilde{a} \Vert_{C^{\frac{\alpha}{4},\alpha}}
\leq \tilde{C}(M,C) T^{\varepsilon}\Vert \gamma-\tilde{\gamma}\Vert_{\mathbb{E}_T}\,.
\end{align*}
The second and the third term of~\eqref{lowerorder}  
can be estimated similarly using H\"{o}lder inequality.

Considering the last term of~\eqref{lowerorder} we get
with the help of the identities
\begin{align*}
\frac{1}{\vert d \vert^2} -\frac{1}{\vert \tilde{d} \vert^2}=&
\left(\vert \tilde{d} \vert-\vert d \vert \right)
\left(\frac{1}{\vert d \vert^2\vert \tilde{d} \vert}
+\frac{1}{\vert d \vert\vert \tilde{d} \vert^2}\right)\,,\\
\frac{1}{\vert d \vert^j} -\frac{1}{\vert \tilde{d} \vert^j}=&
\left(\vert \tilde{d} \vert-\vert d \vert \right)
\left(\frac{1}{\vert d \vert^2\vert \tilde{d} \vert}
+\frac{1}{\vert d \vert\vert \tilde{d} \vert^2}\right)
\left( \frac{1}{\vert d \vert^2}
+\frac{1}{\vert \tilde{d} \vert^2}\right)
\left( \frac{1}{\vert d \vert^{j-4}}
+\frac{1}{\vert \tilde{d} \vert^{j-4}}\right)
\,,
\end{align*}
for $j\in\{6,8\}$ and Lemmata~\ref{regularityest} and~\ref{notproved}
that
$$
\left\lVert
\left(\frac{1}{\vert d\vert^j}-\frac{1}{\vert \tilde{d}\vert^j}\right)
\tilde{a}\left\langle \tilde{b},\tilde{c}\right\rangle\right\rVert_{C^{\frac{\alpha}{4},\alpha}}
\leq \tilde{C}(M,C)T^{\varepsilon}\Vert d-\tilde{d}\Vert_{C^{\frac{\alpha}{4},\alpha}}
\leq \tilde{C}(M,C) T^{\varepsilon}\Vert \gamma-\tilde{\gamma}\Vert_{\mathbb{E}_T}\,.
$$
Then we conclude that
$$
\Vert f(\gamma)-f(\tilde{\gamma})\Vert_{C^{\frac{\alpha}{4}},\alpha}\leq 
\tilde{C}(M,C) T^{\varepsilon}\Vert \gamma-\tilde{\gamma}\Vert_{\mathbb{E}_T}\,.
$$

\textbf{\textit{Step 2: Estimate on 
$\Vert b(\gamma)-b(\tilde{\gamma})\Vert_{C^{\frac{1+\alpha}{4}}}$}}

We now consider the boundary terms. We remind that we use the identification
$$
C^{\frac{1+\alpha}{4},1+\alpha}([0,T]\times\{0,1\}; \mathbb{R}^2)\backsimeq
C^{\frac{1+\alpha}{4}}([0,T];\mathbb{R}^4)
$$
introduced in~\ref{parabolicspaces}.

The expression $b(\gamma)-b(\tilde{\gamma})$ is given by 
\begin{align}\label{expressionb}
&\sum_{i=1}^3\frac{1}{\vert \varphi^i_x\vert^3}
\left\langle (\tilde{\gamma}^i_{xxx}-\gamma^i_{xxx}),\nu_0^i\right\rangle \nu_0^i
+\sum_{i=1}^3\frac{1}{\vert \gamma^i_x\vert^3}
\left\langle \gamma^i_{xxx},\nu^i\right\rangle \nu^i\nonumber \\
-&\sum_{i=1}^3\frac{1}{\vert \tilde{\gamma}^i_x\vert^3}
\left\langle \tilde{\gamma}^i_{xxx},\tilde{\nu}^i\right\rangle \tilde{\nu}^i 
-\frac{\mu}{2}\sum_{i=1}^{3}\left(\frac{\gamma^i_x}{\vert\gamma^i_x\vert}-
\frac{\tilde{\gamma}^i_x}{\vert\tilde{\gamma}^i_x\vert}
\right).
\end{align}
Writing the fourth term of~\eqref{expressionb} as
\begin{equation}\label{thirdterm}
-\frac{\mu}{2}\sum_{i=1}^{3}
\frac{\gamma^i_x-\tilde{\gamma}^i_x}{\vert\gamma^i_x\vert}+\left(
\frac{1}{\vert \gamma^i_x\vert}-
\frac{1}{\vert \tilde{\gamma}^i_x\vert}\right)\tilde{\gamma}^i_x\,,
\end{equation}
we find,
combining Lemma~\ref{notprovedbis} with~\eqref{usefulcone},
that
\begin{align*}
\left\lVert
\frac{\gamma^i_x-\tilde{\gamma}^i_x}{\vert\gamma^i_x\vert}
\right\rVert_{C^{\frac{1+\alpha}{4}}}
&\leq \tilde{C}(M,C)T^{\varepsilon}\Vert \gamma-\tilde{\gamma}\Vert_{\mathbb{E}_T}\,,\\
\left\lVert
\left(
\frac{1}{\vert \gamma^i_x\vert}-
\frac{1}{\vert \tilde{\gamma}^i_x\vert}\right)\tilde{\gamma}^i_x
\right\rVert_{C^{\frac{1+\alpha}{4}}}
&\leq 
C
\left\lVert
\frac{\vert \gamma^i_x-\tilde{\gamma}^i_x \vert}{\vert \gamma^i_x\vert \vert\tilde{\gamma}^i_x\vert}
\right\rVert_{C^{\frac{1+\alpha}{4}}}
\left\lVert
\tilde{\gamma}^i_x
\right\rVert_{C^{\frac{1+\alpha}{4}}}
\leq \tilde{C}(M,C)T^{\varepsilon}\Vert \gamma-\tilde{\gamma}\Vert_{\mathbb{E}_T}\,.
\end{align*}
Adding and subtracting
\begin{align*}
&\sum_{i=1}^{3}\frac{1}{\vert\varphi_x^i\vert^3}\langle\tilde{\gamma}^i_{xxx}-\gamma^i_{xxx},\nu^i\rangle\nu^i_0 + \sum_{i=1}^{3}\frac{1}{\vert\varphi_x^i\vert^3}\langle\tilde{\gamma}^i_{xxx}-\gamma^i_{xxx},\nu^i\rangle\nu^i
+\sum_{i=1}^3\frac{1}{\vert \gamma^i_x\vert^3}
\left\langle \tilde{\gamma}^i_{xxx},\nu^i\right\rangle \nu^i \\
+&
\sum_{i=1}^3\frac{1}{\vert \gamma^i_x\vert^3}
\left\langle \tilde{\gamma}^i_{xxx},\tilde{\nu}^i\right\rangle \nu^i 
+ \sum_{i=1}^3\frac{1}{\vert \gamma^i_x\vert^3}
\left\langle \tilde{\gamma}^i_{xxx},\tilde{\nu}^i\right\rangle \tilde{\nu}^i 
\end{align*}
to the remaining terms of~\eqref{expressionb}
we obtain
\begin{align}
&\sum_{i=1}^3\frac{1}{\vert \gamma^i_x\vert^3}
\left\langle \tilde{\gamma}^i_{xxx},\nu^i-\tilde{\nu}^i\right\rangle \nu^i
+\sum_{i=1}^3\frac{1}{\vert \gamma^i_x\vert^3}
\left\langle \tilde{\gamma}^i_{xxx},\tilde{\nu}^i\right\rangle\left(\nu^i- \tilde{\nu}^i \right)\label{lineone}\\
+&\sum_{i=1}^3\frac{1}{\vert \varphi^i_x\vert^3}
\left\langle (\tilde{\gamma}^i_{xxx}-\gamma^i_{xxx}),\nu_0^i-\nu^i\right\rangle \nu_0^i+\sum_{i=1}^3\frac{1}{\vert \varphi^i_x\vert^3}
\left\langle (\tilde{\gamma}^i_{xxx}-\gamma^i_{xxx}),\nu^i\right\rangle \left(\nu_0^i-\nu^i\right)\label{linetwo}\\
+&\sum_{i=1}^3
\left(\frac{1}{\vert \gamma^i_x\vert^3}-\frac{1}{\vert \tilde{\gamma}^i_x\vert^3}\right)
\left\langle \tilde{\gamma}^i_{xxx},\tilde{\nu}^i\right\rangle \tilde{\nu}^i 
+\sum_{i=1}^3\left(\frac{1}{\vert \varphi^i_x\vert^3}-\frac{1}{\vert \gamma^i_x\vert^3}\right)
\left\langle \tilde{\gamma}^i_{xxx}-\gamma^i_{xxx},\nu^i\right\rangle \nu^i 
\,.\label{linethree}
\end{align}

To estimate~\eqref{lineone} we use
\begin{align*}
&\left\lVert\nu^i-\tilde{\nu}^i
\right\rVert_{C^{\frac{1+\alpha}{4}}}=
\left\lVert
R\left(\frac{\gamma^i_x}{\vert \gamma^i_x\vert}\right)-R\left(
\frac{\tilde{\gamma}^i_x}{\vert \tilde{\gamma}^i_x\vert}\right)
\right\rVert_{C^{\frac{1+\alpha}{4}}}
\leq 
c
\left\lVert
\frac{\gamma^i_x}{\vert \gamma^i_x\vert}-
\frac{\tilde{\gamma}^i_x}{\vert \tilde{\gamma}^i_x\vert}
\right\rVert_{C^{\frac{1+\alpha}{4}}}\\
&
\leq 
 \tilde{C}(M,C)T^\varepsilon\Vert \gamma-\tilde{\gamma}\Vert_{\mathbb{E}_T}\,.
\end{align*}
To gain the last inequality we have repeated the argument
done for the term~\eqref{thirdterm}.
 
Combining 
$\left\lVert\tilde{\gamma}_{xxx}-\gamma_{xxx}\right\rVert_{C^{\frac{1+\alpha}{4}}}
\leq \Vert\tilde{\gamma}-\gamma \Vert_{\mathbb{E}_T}$
with
\begin{equation}
\left\lVert\nu_0^i-\nu^i
\right\rVert_{C^{\frac{1+\alpha}{4}}}
\leq 
 \overline{C}(M,C)T^\varepsilon\Vert \varphi-\gamma\Vert_{\mathbb{E}_T}
 \leq  \tilde{C}(M,C)T^\varepsilon\,
\end{equation}
the expression of~\eqref{linetwo} is bounded by 
$\tilde{C}(M,C)T^{\varepsilon}\Vert \gamma-\tilde{\gamma}\Vert_{\mathbb{E}_T}$.

For the terms in~\eqref{linethree} we use the equality
$$
\left(\frac{1}{\vert a\vert^3}-\frac{1}{\vert b\vert^3}\right)
=
\left(\frac{1}{\vert a\vert}-\frac{1}{\vert b\vert}\right)
\left(
\frac{1}{\vert a\vert^2}+\frac{1}{\vert a\vert\vert b\vert}
+\frac{1}{\vert b\vert^2}
\right)
$$
with $a,b\in\mathbb{R}^2$.
In particular we have
$$
\left\lVert
\left(\frac{1}{\vert \gamma^i_x\vert^3}-\frac{1}{\vert \tilde{\gamma}^i_x\vert^3}\right)
\right\rVert_{C^{\frac{1+\alpha}{4}}}
 \leq 
\overline{C}(M, C)
\left\lVert
\frac{\vert \gamma^i_x-\tilde{\gamma}^i_x \vert}{\vert \gamma^i_x\vert \vert\tilde{\gamma}^i_x\vert}
\right\rVert_{C^{\frac{1+\alpha}{4}}}
\leq \tilde{C}(M,C)T^{\varepsilon}\Vert \gamma-\tilde{\gamma}\Vert_{\mathbb{E}_T}\,.
$$
Also the last term in~\eqref{linethree} is bounded from above by
$\tilde{C}(M,C)T^{\varepsilon}\Vert \gamma-\tilde{\gamma}\Vert_{\mathbb{E}_T}$
using 
$$
\left\lVert
\left(\frac{1}{\vert \varphi^i\vert^3}-\frac{1}{\vert \gamma^i_x\vert^3}\right)
\right\rVert_{C^{\frac{1+\alpha}{4}}}
\leq \overline{C}(M,C)T^{\varepsilon}\Vert \varphi-\gamma\Vert_{\mathbb{E}_T}
\leq \tilde{C}(M,C)T^{\varepsilon}\,
$$
and
$\left\lVert\tilde{\gamma}^i_{xxx}-\gamma^i_{xxx}\right\rVert_{C^{\frac{1+\alpha}{4}}}
\leq \Vert\tilde{\gamma}-\gamma \Vert_{\mathbb{E}_T}$.

Hence we have shown that
$$
\Vert b(\gamma)-b(\tilde{\gamma})\Vert_{C^{\frac{1+\alpha}{4}}}
\leq \tilde{C}(M,C)T^{\varepsilon}\Vert \gamma-\tilde{\gamma}\Vert_{\mathbb{E}_T}\,.
$$ 

{\textbf{\textit{Step 3: Contraction estimate}}}

The estimates presented in {\textbf{\textit{Step 1}}} and {\textbf{\textit{Step 2}}}
imply for all $T\in (0,\min\{\tilde{t}(M,C),T_0\}]$
\begin{align*}
\Vert K_T(\gamma)-K_T(\tilde{\gamma})\Vert_{\mathbb{E}_T}
&=\Vert L^{-1}_T(N_T(\gamma))-L^{-1}(N_T(\tilde{\gamma}))\Vert_{\mathbb{E}_T}\\
&=\Vert L^{-1}_T(N_T(\gamma)-N_T(\tilde{\gamma}))\Vert_{\mathbb{E}_T}\\
&\leq \sup_{T\in[0,T_0]}\Vert L^{-1}_T\Vert_{\mathcal{L}(\mathbb{F}_T,\mathbb{E}_T)}
\Vert N_T(\gamma)-N_T(\tilde{\gamma})\Vert_{\mathbb{F}_T}\\
&\leq \tilde{C}(M,C,R,T_0)T^\varepsilon\Vert \gamma-\tilde{\gamma}\Vert_{\mathbb{E}_T}\,
\end{align*}
with $0<\varepsilon<1$.
Choosing $T(M)
\in(0,\min\{\tilde{t}(M,C),T_0\}]
$ small enough we can conclude that
$K_T$ is a contraction for every $T\in (0,T(M)]$.
\end{proof}

\subsubsection{Short time existence Theorem}

\begin{prop}\label{welldefined}
There exists a positive radius $M=M(R,C,T_0)$ and a positive time $\tilde{T}(M)$ 
such that for all $T\in (0,\tilde{T}(M)]$
the map
$K_T:\mathbb{E}^\varphi_T\cap \overline{B_{M}}\to\mathbb{E}^\varphi_T\cap \overline{B_{M}}$ 
is  well defined.
\end{prop}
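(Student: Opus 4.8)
The plan is to establish the self-mapping property by comparing $K_T(\gamma)$ with the value of $K_T$ at a fixed, $T$--independent reference point, exploiting the Lipschitz estimate already obtained inside Proposition~\ref{contraction}. As the natural reference I would take $\Phi$, the extension of the initial datum $\varphi$ which is constant in time, i.e.\ $\Phi(t,x):=\varphi(x)$. Since $\varphi$ satisfies the concurrency and the second order conditions, $\Phi$ lies in $\mathbb{E}^\varphi_T$, and because $\Phi$ does not depend on $t$ all its parabolic time seminorms vanish, so that $\Vert\Phi\Vert_{\mathbb{E}_T}=\Vert\varphi\Vert_{C^{4+\alpha}([0,1])}=R$ uniformly in $T$. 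I recall that $K_T$ maps $\mathbb{E}^\varphi_T$ into $\mathbb{E}^\varphi_T$, the initial datum being preserved because $N_T$ takes values in $\mathbb{F}^\varphi_T$ and $L_T^{-1}(\mathbb{F}^\varphi_T)\subset\mathbb{E}^\varphi_T$; hence the only point to verify is the quantitative bound $\Vert K_T(\gamma)\Vert_{\mathbb{E}_T}\le M$.

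First I would bound the reference term $\Vert K_T(\Phi)\Vert_{\mathbb{E}_T}$. By Lemma~\ref{uniformestimateestension},
\[
\Vert K_T(\Phi)\Vert_{\mathbb{E}_T}=\Vert L_T^{-1}N_T(\Phi)\Vert_{\mathbb{E}_T}\le c(T_0)\,\Vert N_T(\Phi)\Vert_{\mathbb{F}_T}.
\]
The crucial observation is that, evaluated at the time--constant $\Phi$, the dangerous highest order contribution in $f$, namely $\left(\frac{2}{\vert\varphi^i_x\vert^4}-\frac{2}{\vert\Phi^i_x\vert^4}\right)\Phi^i_{xxxx}$, vanishes identically because $\Phi_x=\varphi_x$; the same cancellation occurs in $b(\Phi)$, which reduces to $-\frac{\mu}{2}\sum_{i=1}^3\frac{\varphi^i_x}{\vert\varphi^i_x\vert}$. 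Consequently $N_T(\Phi)$ consists of time--independent, bounded quantities, and $\Vert N_T(\Phi)\Vert_{\mathbb{F}_T}\le C(R,C)$ with a constant independent both of $T$ and of $M$. This gives $\Vert K_T(\Phi)\Vert_{\mathbb{E}_T}\le M_0:=c(T_0)\,C(R,C)$, again independent of $M$. I would then \emph{fix} $M:=\max\{2M_0,R\}=M(R,C,T_0)$, so that in particular $\Phi\in\overline{B_M}$.

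Next I would control the difference. For $\gamma\in\mathbb{E}^\varphi_T\cap\overline{B_M}$ and $T\le\min\{\tilde t(M,C),T_0\}$, both $\gamma$ and $\Phi$ lie in $\mathbb{E}^\varphi_T\cap\overline{B_M}$, so the Lipschitz estimate established in the proof of Proposition~\ref{contraction} applies and yields
\[
\Vert K_T(\gamma)-K_T(\Phi)\Vert_{\mathbb{E}_T}\le \tilde C(M,C,R,T_0)\,T^\varepsilon\,\Vert\gamma-\Phi\Vert_{\mathbb{E}_T}\le 2M\,\tilde C(M,C,R,T_0)\,T^\varepsilon.
\]
Choosing $\tilde T(M)\in(0,\min\{\tilde t(M,C),T_0\}]$ small enough that $2\,\tilde C(M,C,R,T_0)\,\tilde T(M)^\varepsilon\le\frac12$, the triangle inequality gives, for every $T\le\tilde T(M)$,
\[
\Vert K_T(\gamma)\Vert_{\mathbb{E}_T}\le \Vert K_T(\gamma)-K_T(\Phi)\Vert_{\mathbb{E}_T}+\Vert K_T(\Phi)\Vert_{\mathbb{E}_T}\le \tfrac{M}{2}+M_0\le M,
\]
which is exactly the claimed self-mapping property.

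The main obstacle I expect is the apparent circularity: the nonlinearity $N_T(\gamma)$ a priori grows with $M$, since $\gamma$ ranges over a ball of radius $M$, so a naive estimate $\Vert K_T(\gamma)\Vert_{\mathbb{E}_T}\le c(T_0)\Vert N_T(\gamma)\Vert_{\mathbb{F}_T}$ would force $M$ to depend on $M$. Splitting off the reference value $K_T(\Phi)$ breaks this loop precisely because the reference bound $M_0$ is independent of $M$, while all the $M$--dependence is confined to the difference term, which carries the gain factor $T^\varepsilon$ and can therefore be absorbed into $\frac12 M$ by shrinking $T$. Verifying the two cancellations of the top order terms in $f(\Phi)$ and $b(\Phi)$, together with checking that $\Phi$ genuinely satisfies the linear constraints defining $\mathbb{E}_T$, are the only points requiring care; everything else follows from Lemma~\ref{uniformestimateestension} and the estimates already carried out in Proposition~\ref{contraction}.
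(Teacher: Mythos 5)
Your proposal is correct and follows essentially the same route as the paper: the paper's proof also uses the constant-in-time extension of $\varphi$ as the reference point, fixes $M:=2c(T_0)\max\{R,\Vert N_{T_0}(\varphi)\Vert_{\mathbb{F}_{T_0}}\}$ (noting, as you do via the cancellations, that this quantity depends only on $R$ and $C$), and then combines the bound $\Vert K_T(\varphi)\Vert_{\mathbb{E}_T}\le \frac{M}{2}$ with the Lipschitz estimate from Proposition~\ref{contraction} and the triangle inequality, shrinking $T$ so the difference term is at most $\frac{M}{2}$. Your explicit remark that the top-order terms of $f$ and $b$ vanish at the reference point is a slightly more detailed justification of the same $M$- and $T$-independence that the paper simply asserts.
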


\begin{proof}
	We define 
	\[M:=2c(T_0)\max\{R,\Vert N_{T_0}(\varphi)\Vert_{\mathbb{F}_{T_0}}\}\] 
	and note that the quantity  $\Vert N_{T_0}(\varphi)\Vert_{\mathbb{F}_{T_0}}$ does not depend on $T_0$ but only on $R=\Vert\varphi\Vert_{C^{4+\alpha}([0,1])}$ and $C$. In particular we have for any $T\in (0,T_0]$
	\begin{equation}\label{estimatephi}
	\Vert K_T(\varphi)\Vert_{\mathbb{E}_T}\leq\sup_{T\in (0,T_0]}\Vert L_T^{-1}\Vert_{\mathcal{L}(\mathbb{F}_T,\mathbb{E}_T)}\Vert N_T(\varphi)\Vert_{\mathbb{F}_T}\leq c(T_0)\Vert N_{T_0}(\varphi)\Vert_{\mathbb{F}_{T_0}}\leq \frac{M}{2}\,.
	\end{equation}
	Let $T(M)$ be the corresponding time as in Proposition~\ref{contraction} such that for all $T\in (0,T(M)]$ the map $K_T$ is a contraction on $\overline{B_M}$
	and let $\gamma\in \mathbb{E}^\varphi_T\cap \overline{B_{M}}$. As $\varphi$ lies as well in $\overline{B_M}$ we have for all $T\in (0,T(M)]$ that 
	\begin{equation*}
	\Vert K_T(\gamma)-K_T(\varphi)\Vert_{\mathbb{E}_T}\leq \tilde{C}(M,C,R,T_0)T^\varepsilon\Vert \gamma-\varphi\Vert_{\mathbb{E}_T}\leq \tilde{C}(M,C,R,T_0)2MT^\varepsilon
	\end{equation*}
	for some $0<\varepsilon<1$.
	We choose $\tilde{T}(M)\in (0,T(M)]$ so small that $\tilde{C}(M,C,R,T_0)2MT^\varepsilon\leq \frac{M}{2}$
	for all $T\in (0,\tilde{T}(M)]$. Combining this with~\eqref{estimatephi} implies for all $T\in (0,\tilde{T}(M)]$ and $\gamma\in \mathbb{E}_T^\varphi\cap \overline{B_M}$ that
	\begin{equation*}
	\Vert K_T(\gamma)\Vert_{\mathbb{E}_T}\leq \Vert K_T(\varphi)\Vert_{\mathbb{E}_T}+\Vert K_T(\gamma)-K_T(\varphi)\Vert_{\mathbb{E}_T}\leq M.
	\end{equation*}
	
\end{proof}
\begin{teo}\label{existenceanalyticprob}
Let $\varphi$ be an admissible initial parametrisation. There exists a positive radius $M$ and a positive time $T$ such that the system~\eqref{thetaC0}
has a unique solution in $C^{\frac{4+\alpha}{4},4+\alpha}\left([0,T]\times[0,1]\right)\cap \overline{B_M}$.	
\end{teo}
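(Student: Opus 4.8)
The plan is to realise the solution of~\eqref{thetaC0} as the unique fixed point of the map $K_T = L_T^{-1}N_T$ and to invoke the Banach fixed point theorem. First I would observe that a fixed point of $K_T$ is \emph{exactly} a solution of~\eqref{thetaC0}: since $L_T$ is a continuous isomorphism from $\mathbb{E}_T$ onto $\mathbb{F}_T$, the identity $\gamma = K_T(\gamma) = L_T^{-1}N_T(\gamma)$ is equivalent to $L_T\gamma = N_T(\gamma)$. Unwinding the definitions of $L_T$ and $N_T$, the first component reproduces the motion equation $\gamma^i_t = -A^i\nu^i - T^i\tau^i$, the second component reproduces the third order condition~\eqref{thirdorder}, and the last component is the initial datum $\gamma_{\vert t=0}=\varphi$; the concurrency and second order conditions are built into membership in $\mathbb{E}_T$. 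Conversely every solution of~\eqref{thetaC0} lying in $\overline{B_M}$ belongs to $\mathbb{E}^\varphi_T$ and is therefore a fixed point of $K_T$. Hence existence and uniqueness of the solution in $\mathbb{E}^\varphi_T \cap \overline{B_M}$ reduce to existence and uniqueness of a fixed point of $K_T$ on this set.

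Next I would fix the radius and time. Choosing $M = M(R,C,T_0)$ as in Proposition~\ref{welldefined} and $T \in (0,\tilde{T}(M)]$ as produced there, Proposition~\ref{welldefined} guarantees that $K_T$ maps $\mathbb{E}^\varphi_T\cap\overline{B_M}$ into itself, while Proposition~\ref{contraction} (after possibly shrinking $T$ so that the constant $\tilde{C}T^\varepsilon<1$) guarantees that $K_T$ is a contraction on this set. It remains to check that $\mathbb{E}^\varphi_T\cap\overline{B_M}$ is a complete metric space: the space $\mathbb{E}_T$ is a closed subspace of the Banach space $C^{\frac{4+\alpha}{4},4+\alpha}([0,T]\times[0,1];(\mathbb{R}^2)^3)$ because the concurrency and second order conditions are closed linear constraints, $\mathbb{E}^\varphi_T$ is a closed affine subset of $\mathbb{E}_T$, and $\overline{B_M}$ is the closed ball; their intersection is thus closed, hence complete, and nonempty since $\varphi$ (extended constantly in time) lies in it by construction.

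The Banach fixed point theorem then yields a unique $\gamma^\ast \in \mathbb{E}^\varphi_T\cap\overline{B_M}$ with $K_T(\gamma^\ast)=\gamma^\ast$, obtained as the limit of the iteration $K_T^n(\varphi)$. By the identification above, $\gamma^\ast$ is the unique solution of~\eqref{thetaC0} in $C^{\frac{4+\alpha}{4},4+\alpha}([0,T]\times[0,1])\cap\overline{B_M}$, which is the claim.

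Finally, I expect no genuinely hard step remains at this stage, since the heavy analysis has already been carried out upstream: the maximal regularity isomorphism $L_T$ rests on Theorem~\ref{linearexistence} (and ultimately on the Lopatinskii--Shapiro verification in Lemma~\ref{LopatinskiiShapiro}), and the decisive small-time smallness with the gain $T^\varepsilon$ is exactly the content of Propositions~\ref{contraction} and~\ref{welldefined}. The only point demanding care is the bookkeeping that a fixed point of $K_T$ really does solve the \emph{full} nonlinear system~\eqref{thetaC0}, i.e. that the splitting into linear principal part plus right-hand side performed in the linearisation~\ref{linearisation} is consistent, so that $L_T\gamma=N_T(\gamma)$ encodes precisely the motion equation, the third order condition and the initial condition, with the remaining boundary conditions absorbed into the definition of $\mathbb{E}_T$.
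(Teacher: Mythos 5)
Your proposal is correct and follows essentially the same route as the paper's own proof: both apply the Contraction Mapping Principle to $K_T=L_T^{-1}N_T$ on the complete metric space $\mathbb{E}_T^\varphi\cap\overline{B_M}$, with the self-map and contraction properties supplied by Propositions~\ref{welldefined} and~\ref{contraction}. The details you spell out (the equivalence between fixed points of $K_T$ and solutions of~\eqref{thetaC0}, and the closedness/completeness of $\mathbb{E}_T^\varphi\cap\overline{B_M}$) are precisely the points the paper states without elaboration.
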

\begin{proof}
Let $M$ and $\tilde{T}(M)$ be the radius and time as in Proposition~\ref{welldefined} and let $T\in (0,\tilde{T}(M)]$. Then
\[K_T:\mathbb{E}_T^\varphi\cap \overline{B_M}\to\mathbb{E}_T^\varphi\cap \overline{B_M}\]
is a contraction of the complete metric space $\mathbb{E}_T^\varphi\cap\overline{B_M}$ and has a unique fixed point in $\mathbb{E}_T^\varphi\cap \overline{B_M}$ by the Contraction Mapping Principle. As the solutions of~\eqref{thetaC0} in $C^{\frac{4+\alpha}{4},4+\alpha}\left([0,T]\times[0,1]\right)\cap \overline{B_M}$ are precisely the fixed points of $K_T$ in $\mathbb{E}_T^\varphi\cap \overline{B_M}$, the claim follows.
\end{proof}

\subsection{Geometric evolution and geometric uniqueness}\label{geometricstuff}

In the previous subsection we have shown that  there exists a unique solution 
to the Analytic Problem provided that the initial data is admissible.
It remains to establish a relation between geometrically admissible initial
networks and admissible initial parametrisations. 
This is necessary  to solve the question of geometric existence and uniqueness of the flow.

\begin{lemma}\label{repara}
Suppose that $\Theta_0$ is a geometrically admissible initial network in the sense of
Definition~\ref{admg} parametrised by 
$\sigma^i$. Then there exist
three smooth functions $\theta^i:[0,1]\to [0,1]$ such that the reparametrisation 
$\left(\sigma^i\circ\theta^i\right)$
of  $\Theta_0$ by $(\theta^i)$ is an admissible initial parametrisation for the Analytic Problem~\eqref{thetaC0}.
\end{lemma}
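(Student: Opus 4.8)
The plan is to reduce the claim to adjusting only two boundary conditions at the two triple junctions, and to meet these by prescribing the jets of the $\theta^i$ there. First I would record which requirements of Definition~\ref{adm} come for free. If each $\theta^i\colon[0,1]\to[0,1]$ is a smooth, strictly increasing bijection with $\theta^i(0)=0$ and $\theta^i(1)=1$, then $\gamma^i:=\sigma^i\circ\theta^i$ stays regular and of class $C^{4+\alpha}$, preserves the junction points $\gamma^i(0)=\sigma^i(0)$ and $\gamma^i(1)=\sigma^i(1)$ (concurrency), and has the same unit tangents $\tau_0^i$, unit normals $\nu_0^i$, and arclength derivatives of the curvature as $\sigma^i$. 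Hence the curvature condition and the third order condition are inherited directly from Definition~\ref{admg}, and the span condition $\mathrm{span}\{\nu_0^1(y),\nu_0^2(y),\nu_0^3(y)\}=\mathbb{R}^2$ follows from the hypothesis that two curves meet at a strictly positive angle, which forces two of the $\tau_0^i(y)$, and therefore two of the $\nu_0^i(y)$, to be linearly independent. Thus only the \emph{second order condition} $\gamma^i_{xx}(y)=0$ and the velocity matching part of the compatibility conditions remain to be arranged at $y\in\{0,1\}$.

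The conceptual core is the velocity matching. Writing the common junction velocity as $V$, the requirement $A_0^i\nu_0^i+T_0^i\tau_0^i=A_0^j\nu_0^j+T_0^j\tau_0^j$ amounts to asking for a single vector with $-V=A_0^i\nu_0^i+T_0^i\tau_0^i$ for all $i$, whose normal components are then forced to satisfy $\langle V,\nu_0^i\rangle=-A_0^i$ for $i=1,2,3$. Since the $\nu_0^i$ span $\mathbb{R}^2$, the map $V\mapsto(\langle V,\nu_0^i\rangle)_i$ is injective, and by the relation $\sum_i\sin(\alpha_i)\nu_0^i=0$ its range is exactly $\{w\in\mathbb{R}^3:\sum_i\sin(\alpha_i)w_i=0\}$. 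Hence a unique $V=V(y)$ solving the normal constraints exists if and only if $\sum_i\sin(\alpha_i)A_0^i=0$, which is precisely the last hypothesis of Definition~\ref{admg}; once $V(y)$ is fixed, the compatibility conditions hold exactly when the tangential speeds take the prescribed values $T_0^i(y)=-\langle V(y),\tau_0^i(y)\rangle$.

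To realize both conditions I would prescribe the boundary $4$-jet of each $\theta^i$. Because $k_0^i(y)=0$, the normal part of $\gamma^i_{xx}$ already vanishes, so by Remark~\ref{secondorder} the second order condition reduces to $\langle\gamma^i_{xx},\tau_0^i\rangle=\partial_x|\gamma^i_x|=0$; by the chain rule this quantity is affine in $\theta^i_{xx}(y)$ with nonzero coefficient $|(\sigma^i)'|$. Likewise the tangential velocity $T^i$ from~\eqref{Tang} contains the top order contribution $2|(\sigma^i)'|\,\theta^i_{xxxx}/|\gamma^i_x|^4$, hence is affine in $\theta^i_{xxxx}(y)$ with nonzero coefficient. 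Fixing $\theta^i_x(y)=1$ and $\theta^i_{xxx}(y)=0$, I then solve successively for $\theta^i_{xx}(y)$ to enforce the second order condition and for $\theta^i_{xxxx}(y)$ to enforce the required value of $T_0^i(y)$; the two junctions are handled independently as they involve disjoint jets. A standard interpolation (e.g.\ a Hermite construction adjusted near the endpoints to preserve monotonicity) then yields a smooth increasing bijection of $[0,1]$ with these finite boundary jets, and $(\sigma^i\circ\theta^i)$ satisfies every requirement of Definition~\ref{adm}.

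The hard part is the velocity matching: the essential point is to recognise that the scalar geometric hypothesis $\sum_i\sin(\alpha_i)A_0^i=0$ is exactly the solvability condition for a common junction velocity, after which the needed tangential speeds can be produced by reparametrisation because $T^i$ detects $\theta^i_{xxxx}$ at highest order. By contrast, checking the inheritance of the geometric conditions and carrying out the monotonicity-preserving jet interpolation are routine.
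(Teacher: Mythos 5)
Your proposal is correct and takes essentially the same route as the paper: both arguments let the reparametrisation-invariant conditions (concurrency, curvature, third order, span of the $\nu_0^i$) come for free, then prescribe the boundary $4$-jets of the $\theta^i$ — with $\theta^i_x(y)=1$, $\theta^i_{xx}(y)$ chosen to kill $\gamma^i_{xx}(y)$ (possible precisely because $k_0^i(y)=0$), and $\theta^i_{xxxx}(y)$ chosen to enforce the velocity compatibility, whose solvability is exactly the hypothesis $\sum_i \sin(\alpha_i)A_0^i=0$ — and finally interpolate to a smooth increasing bijection. The only difference is presentational: you establish solvability of the junction-velocity system abstractly via the injectivity and range of $V\mapsto\left(\langle V,\nu_0^i\rangle\right)_{i}$, whereas the paper writes out the equivalent explicit sine/cosine relations for the $T^i_\varphi$.
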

\begin{proof}
Without loss of generality we may assume that $\sigma^1(0)=\sigma^2(0)=\sigma^3(0)$
and $\sigma^1(1)=\sigma^2(1)=\sigma^3(1)$. 

We look for some smooth maps $\theta^i:[0,1]\to [0,1]$ such that 
 $\theta^i_x(x)\neq 0$
for every $x\in [0,1]$,  $\theta^i(0)=0$ and $\theta^i(1)=1$.
Then $\varphi^i:=\sigma^i\circ\theta^i:[0,1]\to\mathbb{R}^2$ is regular and
of class $C^{4+\alpha}([0,1])$ and the concurrency condition is satisfied.
Since the unit normal vectors are invariant under reparametrisation, we have
$\mathrm{span}\{\nu^1_0,\nu^2_0,\nu^3_0\}=\mathbb{R}^2$ at both triple junctions. Also
the curvature and third order condition remain
true being invariant under reparametrisation.
In order to fulfil the second order condition $\varphi^i_{xx}(0)=\varphi^i_{xx}(1)=0$
we demand that $\theta_x^i(y)=1$ and
 $\theta^i_{xx}(y)=\frac{-\sigma^i_{xx}(y)}{\sigma^i_x(y)}$
 for $y\in\{0,1\}$.
Indicating with subscript $\varphi$ 
the quantities computed for the reparametrised object
the last property we have to prove is that at $y\in\{0,1\}$
it holds
$$
A^1_\varphi\nu^1_\varphi+T^1_\varphi\tau^1_\varphi=
A^2_\varphi\nu^2_\varphi+T^2_\varphi\tau^2_\varphi=
A^3_\varphi\nu^3_\varphi+T^3_\varphi\tau^3_\varphi\,.
$$
Since the geometric quantities are 
 invariant under reparametrisation, this 
is equivalent to 
\begin{equation}\label{compatib}
A^1_0\nu^1_0+T^1_\varphi\tau^1_0=
A^2_0\nu^2_0+T^2_\varphi\tau^2_0=
A^3_0\nu^3_0+T^3_\varphi\tau^3_0\,.
\end{equation}
This is satisfied if and only if it holds
\begin{align*}
&\sin(\alpha^1) A_0^1+\sin(\alpha^2) A_0^2+\sin(\alpha^3) A_0^3=0\,,\\
&\sin(\alpha^1)T_\varphi^1=\cos(\alpha^2)A_0^2-\cos(\alpha^3)A_0^3\,,\\
&\sin(\alpha^2)T_\varphi^2=\cos(\alpha^3)A_0^3-\cos(\alpha^1)A_0^1\,,\\
&\sin(\alpha^3)T_\varphi^3=\cos(\alpha^1)A_0^1-\cos(\gamma^2)A_0^2\,.
\end{align*}
The first equation is fulfilled as $\Theta_0$ is a geometrically
admissible initial network. Asking that
$\theta_{xxx}^i(0)=1=\theta_{xxx}^i(1)$ the remaining three equations are of the form 
$$
a^i(\sigma^i_{x})(y)\,\theta^i_{xxxx}(y)
+b^i(\sigma^i_{xxxx}, \sigma^i_{xxx}, \sigma^i_{xx}, \sigma^i_{x})(y)=0\,,
$$
where $a^i$ and $b^i$ are given non--linear functions.
Hence $\theta^i_{xxxx}(0)$ and $\theta^i_{xxxx}(1)$ are uniquely determined
for every $i\in\{1,2,3\}$.
Thus we may choose $\theta^i$ to be the fourth Taylor polynomial
near each triple junction determined by the values of the derivatives at the triple junctions
that we have just derived. 
\end{proof}

\begin{teo}\label{geomexistence}[Geometric existence and uniqueness]
Let $\Theta_0$ be a geometrically admissible initial network. Then there exists
a positive time $T$ such that within the time interval $[0,T]$ the
Problem~\ref{geometricproblem} admits 
a unique solution $\Theta(t)$ 
 in the sense of 
Definition~\ref{geometricsolution}.
\end{teo}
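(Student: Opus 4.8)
The plan is to read off both existence and uniqueness from the analytic theory developed above, using the reparametrisation Lemma~\ref{repara} as the bridge between networks as geometric objects and admissible parametrisations, in the spirit of~\cite{garckenov}. For \emph{existence}, starting from a geometrically admissible $\Theta_0$, Lemma~\ref{repara} furnishes an admissible initial parametrisation $\varphi$ of $\Theta_0$, and Theorem~\ref{existenceanalyticprob} then yields, on some interval $[0,T]$, a unique solution $\hat\gamma$ of the analytic system~\eqref{thetaC0} with $\hat\gamma(0,\cdot)=\varphi$. First I would check that the family $\Theta(t):=\bigcup_{i=1}^3\hat\gamma^i(t,[0,1])$ solves Problem~\ref{geometricproblem} in the sense of Definition~\ref{geometricsolution}. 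This is immediate: projecting the motion equation of~\eqref{thetaC0} onto $\nu^i$ gives $\langle\hat\gamma^i_t,\nu^i\rangle\nu^i=-A^i\nu^i$; the second order condition $\hat\gamma^i_{xx}(t,y)=0$ forces $\hat\gamma^i_{ss}(t,y)=0$ and hence the curvature condition $k^i(t,y)=0$; while concurrency, the third order condition and the initial datum $\Theta(0)=\Theta_0$ coincide in~\eqref{thetaC0} and~\eqref{Theta0}.

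The heart of the statement is \emph{geometric uniqueness}. Let $\Theta(t)$ be any solution in the sense of Definition~\ref{geometricsolution}, with parametrisation $\gamma\in C^{\frac{4+\alpha}{4},4+\alpha}([0,T]\times[0,1])$. The plan is to reparametrise $\gamma$ into a solution of the analytic system and then invoke the uniqueness part of Theorem~\ref{existenceanalyticprob}. Concretely, I would seek diffeomorphisms $\phi^i(t,\cdot)\colon[0,1]\to[0,1]$ with $\phi^i_x>0$ and $\phi^i(0,\cdot)=(\gamma^i(0,\cdot))^{-1}\circ\varphi^i$ such that $\hat\gamma^i(t,x):=\gamma^i(t,\phi^i(t,x))$ satisfies~\eqref{thetaC0}. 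Since $\gamma$ already has normal velocity $-A^i\nu^i$ and each $\phi^i(t,\cdot)$ preserves the image, the normal component of the motion equation, the third order condition (being geometric) and concurrency are inherited automatically; what genuinely constrains $\phi$ is the prescribed tangential velocity $\langle\hat\gamma^i_t,\tau^i\rangle=-T^i$ from~\eqref{Tang} together with the second order condition $\hat\gamma^i_{xx}(t,y)=0$.

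Differentiating $\hat\gamma^i=\gamma^i\circ\phi^i$, the genuinely fourth order contribution of $T^i$ (coming from the term $2|\hat\gamma^i_x|^{-4}\hat\gamma^i_{xxxx}$ of~\eqref{Apart}) is proportional to $\phi^i_{xxxx}$, so that after dividing by $|\gamma^i_x\circ\phi^i|$ the tangential velocity requirement turns into a scalar fourth order quasilinear parabolic equation
$$
\phi^i_t+\frac{2}{|\hat\gamma^i_x|^4}\phi^i_{xxxx}=G^i(t,x,\phi^i,\phi^i_x,\phi^i_{xx},\phi^i_{xxx})\,,
$$
whose principal part is exactly that of~\eqref{thetaC0} and is therefore $\mu$-independent, the given quantity $\langle\gamma^i_t,\tau^i\rangle$ entering as a source of class $C^{\frac{\alpha}{4},\alpha}$. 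The second order condition, evaluated using that $\gamma$ obeys the curvature condition (so $\gamma^i_{xx}$ is tangential at the junctions), becomes a boundary condition prescribing $\phi^i_{xx}(t,y)$ in terms of $\phi^i_x(t,y)$, whereas concurrency reduces to $\phi^i(t,0)=0$ and $\phi^i(t,1)=1$; these are precisely the two boundary conditions per endpoint required by the fourth order operator, and the three equations decouple. I would solve this initial-boundary value problem on a short time interval by the same contraction scheme based on Solonnikov's theory used above for~\eqref{thetaC0}, after checking its compatibility conditions at $t=0$ (which follow from $\gamma(0,\cdot)$ and $\varphi$ parametrising the same admissible network) and verifying that $\phi^i_x>0$ persists for short time, so that each $\phi^i(t,\cdot)$ stays a diffeomorphism. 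The hard part is exactly this construction: pinning down the correct boundary conditions for $\phi$, establishing the scalar fourth order parabolic well-posedness with its compatibility, and controlling the diffeomorphism property. Granting it, $\hat\gamma$ solves~\eqref{thetaC0} with initial datum $\varphi$, so by Theorem~\ref{existenceanalyticprob} it coincides with the solution from the existence part; since $\hat\gamma$ and $\gamma$ parametrise the same networks, $\Theta(t)$ equals the geometric solution constructed above, which yields uniqueness.
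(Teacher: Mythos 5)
Your proposal follows essentially the same route as the paper: existence via Lemma~\ref{repara} combined with Theorem~\ref{existenceanalyticprob}, and uniqueness by constructing a time-dependent family of reparametrisations (your $\phi^i$, the paper's $\psi$) solving a scalar fourth-order parabolic boundary value problem with the same principal part $\phi_t+\tfrac{2}{\vert\gamma_x\circ\phi\vert^4\vert\phi_x\vert^4}\phi_{xxxx}$, endpoint-fixing conditions, and a $\phi_{xx}$ boundary condition that uses the curvature condition of $\gamma$ to enforce the second order condition of $\gamma\circ\phi$, so that the reparametrised solution solves the analytic system~\eqref{thetaC0} and uniqueness of the analytic problem applies. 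This is precisely the paper's argument following~\cite{garckenov}, including the deferral of the solvability of the reparametrisation problem to the same Solonnikov/Lopatinskii--Shapiro plus contraction machinery.
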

\begin{proof}
Let $\Theta_0$  be a geometrically admissible initial network.
Thanks to Lemma~\ref{repara} 
there exists an admissible initial parametrisation $\varphi$ for
$\Theta_0$.
Then by Theorem~\ref{existenceanalyticprob} there exists a solution 
$\tilde{\gamma}$ to system~\eqref{thetaC0} on some time interval $[0,\tilde{T}]$.
In particular the family of networks $(\tilde{\Theta}(t))_{t\in[0,\tilde{T}]}$
with $\tilde{\Theta}(t)=\cup_{i=1}^3\tilde{\gamma}^i(t)$ is a solution
to Problem~\ref{geometricproblem} 
 in the sense of Definition~\ref{geometricsolution}.

\medskip

Suppose that there exists another solution $(\Theta(t))_{t\in[0,T]}$ 
to the $C^0$--flow in the sense of Definition~\ref{geometricsolution}
in a certain time interval $[0,T]$ with $T>0$
and with the same initial network $\Theta_0$.
We assume that $\left(\Theta(t)\right)$ is parametrised by $\gamma$.
\medskip

It remains to prove that there exists a $\overline{T}\in (0,\min\{\tilde{T},T\}]$
such that 
 the networks $(\tilde{\Theta}(t))$ and $(\Theta(t))$
coincide for every $t\in[0,\overline{T}]$.
To show this it is enough to find
a time dependent family of reparametrisations $\psi:[0,\overline{T}]\times[0,1]\to[0,1]$
such that $\tilde{\gamma}(t,x)=\gamma(t,\psi(t,x))$ for all $(t,x)\in[0,\overline{T}]\times[0,1]$.

Following the approach presented in~\cite[Theorem 5.3]{garckenov} to prove
the existence of $\psi$ it is sufficient to solve the following initial boundary value problem
on $[0,\overline{T}]\times[0,1]$:
\begin{equation}
\begin{cases}
\psi_t(t,x)+\frac{2\psi_{xxxx}(t,x)}{\vert\gamma_x(t,\psi(t,x))\vert^4 \vert \psi_x(t,x)\vert^4}
+g(\psi_{xxx}, \psi_{xx}, \psi_x, \psi, \gamma_{xxxx}, \gamma_{xxx},\gamma_{xx}, \gamma_x,
\gamma_t)=0,\\
\psi(t,y)=0,\\
\psi_{xx}(t,y)=-\frac{\gamma_{xx}(t,y)\psi^2_x(t,y)}{\gamma_x(t,y)},\\
\gamma(0,\psi(0,x))=\varphi(x)
\end{cases}
\end{equation}
with $y\in\{0,1\}$ and $(t,x)\in [0,\overline{T}]\times [0,1]$.

\medskip
Indeed assume that $\psi$ exists. Then the family of parametrisations $Z(t,x):=\gamma(t,\psi(t,x))$ is a solution to the Analytic Problem~\eqref{thetaC0}. The motion equation follows from the PDE satisfied by $\psi$ and the geometric evolution of $\Theta$ in normal direction. The geometric boundary conditions (concurrency, curvature and third order) are satisfied as $\Theta$ is a solution to the Geometric Problem. The boundary condition on $\psi_{xx}$ guarantees that $Z$ satisfies the second order condition. By uniqueness of the Analytic Problem for short time $Z$ and $\tilde{\gamma}$ need to coincide on a small time interval.

\medskip

We note that the parabolic  boundary value problem for 
$\psi^i$ has a similar structure as the system of PDEs we studied before.
The associated linearised system satisfies the Lopatinskii--Shapiro condition and a contraction argument allows us to conclude that the desired function $\psi^i$ exists and is unique.
\end{proof}

\section{$C^1$--Flow}\label{C1flow}

In this section we briefly discuss an 
 analogous  result to  Theorem~\ref{geomexistence}
in the case of a $C^1$ Triod evolving with respect 
to the gradient flow of the elastic energy $E_\mu$. 

\medskip

In contrast to the $C^0$--case we now demand that at the triple junction 
the curves form an angle of $120$ degrees. Moreover we ask that 
the other three endpoints $P^i$ remain fixed on $\partial\Omega$ during the evolution. This corresponds to the Navier case.

\begin{dfnz}\label{admg1}
A Triod $\mathbb{T}_0$ is called geometrically admissible for
Problem~\ref{generalgeometricproblemII} if
\begin{itemize}
	\item the curves meet in one triple junction
forming angles of $120$ degrees 
and satisfying $\sum_{i=1}^{3}k^i_0=0$ 
and $\sum_{i=1}^{3}\left(2k_{0,s}^{i}\nu_0^{i}-(k^i_0)^2\tau^{i}_0\right)=0$;
	\item each curve has zero curvature at the endpoint $P^i$ on $\partial\Omega$;
	\item at the triple junction it holds that 
	$A^1_0+A^2_0+A^3_0=0$ where $A^i_0=2k_{0,ss}^{i}+\left(k_0^{i}\right)^{3}-\mu k_0^{i}$;
	\item there exists a parametrisation $\gamma=(\gamma^1,\gamma^2,\gamma^3)$ 
	of $\mathbb{T}_0$ 
	such that every curve is regular and an element of $C^{4+\alpha}([0,1];\mathbb{R}^2)$.
\end{itemize}
\end{dfnz}
\begin{dfnz}\label{geometricsolution1}
	Let $\mathbb{T}_0$ be a geometrically admissible initial Triod.
	A time dependent family of Triods $\mathbb{T}(t)$
	is a solution to
	the  $C^1$--Flow with initial data $\mathbb{T}_0$ 
	in a time interval $[0,T]$ with $T>0$
	if and only if there exist
	parametrisations $\gamma(t)=(\gamma^1(t),\gamma^2(t),\gamma^3(t))$ 
	of $\mathbb{T}(t)$, $t\in[0,T]$, with
	$\gamma^i\in C^{\frac{4+\alpha}{4}, 4+\alpha}([0,T]\times[0,1],\mathbb{R}^2)$
	such that 
	for every  $t\in\left[0,T\right],x\in\left[0,1\right]$ and for $i\in\{1,2,3\}$
	the following system
	\begin{equation}
	\begin{cases}
	\begin{array}{lll}
	\left\langle \gamma_t^{i}(t,x), \nu^i(t,x)\right\rangle  \nu^i(t,x)
	=-A^{i}(t,x)\nu^{i}(t,x)& &\text{motion,}\\
	\gamma^{1}\left(t,0\right)=\gamma^{2}\left(t,0\right)=\gamma^{3}\left(t,0\right)&
	&\text{concurrency condition,}\\
	\tau^1(t,0)+\tau^2(t,0)+\tau^3(t,0)=0 & &\text{angle condition,}\\
	k^{1}(t,0)+k^2(t,0)+k^3(t,0)=0 & &\text{curvature condition,}\\
	\sum_{i=1}^{3}\left(2k_{s}^{i}\nu^{i}-(k^i)^2\tau^{i}\right)(t,0)=0& &\text{third order condition,}\\
	\gamma^i(t,1)=P^i& &\text{fixed endpoints,}\\
	k^{i}(t,1)=0 & &\text{curvature condition,}\\
	\mathbb{T}(0)=\mathbb{T}_0& &
	\text{initial data}
	\end{array}
	\end{cases}
	\end{equation}
	is satisfied.
\end{dfnz}

Our aim is to prove geometric existence and uniqueness of the $C^1$--flow.

\begin{teo}\label{geoexistence2}
Let $\mathbb{T}_0$ be a geometrically admissible initial Triod. Then there exists
a positive time $T$ such that within the time interval $[0,T]$ the
Problem~\ref{generalgeometricproblemII} admits 
a unique solution $(\mathbb{T}(t))$ 
 in the sense of 
Definition~\ref{geometricsolution1}.
\end{teo}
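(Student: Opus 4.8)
The plan is to mirror the entire $C^0$--development carried out for the Theta--network in Section~\ref{C0flow} (as established through Theorem~\ref{geomexistence}), adapting each step to the boundary conditions of the Navier $C^1$--Triod listed in Definition~\ref{geometricsolution1}. First I would pass from the geometric problem to an analytic problem for parametrisations. As explained in Remark~\ref{secondorder}, the normal velocity alone does not determine a parabolic PDE, so at the moving triple junction $x=0$ one must fix a tangential velocity $T^i$ via~\eqref{Tang} and impose one extra boundary condition per curve. The natural analogue of the tangential second order condition~\eqref{tangentialsecondorder} is again $\langle\gamma^i_{xx},\tau^i\rangle=0$ at $x=0$; together with the curvature condition $\sum_i k^i=0$ and the angle condition $\sum_i\tau^i=0$ this should complete the under-determined system without altering the geometry. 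At the fixed endpoints $x=1$ the situation is simpler: the conditions $\gamma^i(t,1)=P^i$ and $k^i(t,1)=0$ are essentially decoupled and of the same type as in standard single-curve Navier problems, so the only genuinely new work is at the junction.

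Next I would linearise the resulting quasilinear fourth order system around the admissible initial parametrisation $\varphi$, exactly as in~\ref{linearisation}, obtaining a linear parabolic system $\mathcal L\gamma=f$ with the motion operators $\partial_t+\tfrac{2}{|\varphi^i_x|^4}\partial_x^4$ and linearised boundary operators. The parabolicity in the sense of Solonnikov is verified verbatim, since the principal symbol depends only on the highest order terms and is identical to the Theta case. The crucial step is to verify the Lopatinskii--Shapiro condition at $x=0$ for the new boundary operators. I would test the decaying ODE solutions of the motion equation against $|\varphi^i_x|\langle\overline\gamma^i,\nu^i_0\rangle\nu^i_0$ and integrate by parts twice, precisely as in the proof of Lemma~\ref{LopatinskiiShapiro}. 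The angle and curvature conditions must be used to make the junction boundary terms vanish; the key structural fact that should survive is that the admissibility hypothesis $\mathrm{span}\{\nu^1_0,\nu^2_0,\nu^3_0\}=\mathbb R^2$ (which also holds for a non-degenerate Triod since the angles are $120$ degrees) forces $\gamma^i(0)=0$, after which a second test against the tangential directions yields $\gamma^i\equiv 0$.

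I expect the verification of the Lopatinskii--Shapiro condition at the junction to be the main obstacle, because the $C^1$ boundary conditions couple the three unknown curves through the angle constraint $\sum_i\tau^i=0$ and its linearisation in a way that differs from the concurrency-only coupling of the $C^0$ case. One must carefully check that after integrating by parts the surviving boundary pairings reorganise, using $\sum_i\nu^i_0=0$ and $\sum_i\tau^i_0=0$, into an inner product of a single $\overline\gamma^1(0)$ against the third order boundary expression, which then vanishes by the third order condition. Because the $120$ degree angles guarantee that no pair of normals is parallel, the degenerate geometric situation that obstructs the $C^0$ case does not arise here, so the condition should hold unconditionally.

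Granting the Lopatinskii--Shapiro condition, the remainder is routine and parallels Theorems~\ref{linearexistence}, \ref{existenceanalyticprob} and~\ref{geomexistence}. Solonnikov's theory~\cite{solonnikov2} gives existence, uniqueness and maximal regularity estimates for the linearised system in the parabolic Hölder spaces; defining the solution operator $L_T^{-1}$ and the nonlinearity $N_T$ as in~\ref{contractionargu}, the contraction estimates of Proposition~\ref{contraction} carry over since they only exploit the Banach-algebra structure of the Hölder norms and the factor $T^\varepsilon$ gained from Lemma~\ref{notproved}. This yields a unique short-time solution of the analytic problem by the Contraction Mapping Principle. Finally, geometric existence and uniqueness follow by reparametrisation, exactly as in Lemma~\ref{repara} and Theorem~\ref{geomexistence}: one constructs an admissible initial parametrisation from the geometrically admissible Triod, and for uniqueness one solves the auxiliary parabolic problem for a time-dependent reparametrisation $\psi$ following~\cite[Theorem 5.3]{garckenov}, whose linearisation again satisfies the Lopatinskii--Shapiro condition. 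This completes the proof of Theorem~\ref{geoexistence2}.
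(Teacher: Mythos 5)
Your proposal is correct and its overall architecture coincides with the paper's: reduction to the analytic problem with the tangential velocity~\eqref{Tang} and the tangential second order condition at the junction, linearisation around $\varphi$, Solonnikov theory, contraction estimates, and geometric existence/uniqueness via reparametrisation as in Theorem~\ref{geomexistence}. The one point where you genuinely diverge is inside the key step, the Lopatinskii--Shapiro verification at $x=0$. You propose to reuse the $C^0$ test function, pairing the motion equation with $\vert\varphi^i_x\vert\langle\overline{\gamma}^i,\nu^i_0\rangle\nu^i_0$ as in Lemma~\ref{LopatinskiiShapiro}, whereas the paper (Lemma~\ref{proofofLS}) tests against the second derivatives $\tfrac{1}{\vert\varphi^i_x\vert}\overline{\langle\gamma^i_{xx},\nu^i_0\rangle}\,\nu^i_0$. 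Your choice does work, but the step you leave implicit is precisely the crux: after two integrations by parts, only the pairing of $\gamma^i(0)$ with $\gamma^i_{xxx}(0)$ reorganises, via concurrency, into $\overline{\gamma}^1(0)$ against the third order sum; the other boundary term, $\sum_i\tfrac{1}{\vert\varphi^i_x\vert^3}\overline{\langle\gamma^i_x(0),\nu^i_0\rangle}\langle\gamma^i_{xx}(0),\nu^i_0\rangle$, admits no such common factor. To kill it you must note that the linearised angle condition $\sum_i\tfrac{1}{\vert\varphi^i_x\vert}\langle\gamma^i_x(0),\nu^i_0\rangle\nu^i_0=0$, together with $\sum_i\nu^i_0=0$ and the pairwise linear independence of the normals, forces the three coefficients $\tfrac{1}{\vert\varphi^i_x\vert}\langle\gamma^i_x(0),\nu^i_0\rangle$ to coincide, after which the curvature condition $\sum_i\tfrac{1}{\vert\varphi^i_x\vert^2}\langle\gamma^i_{xx}(0),\nu^i_0\rangle=0$ annihilates the sum. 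This ``equal coefficients'' argument is exactly the one the paper applies instead to the third order condition (identity~\eqref{thirdorderidentity}): the two proofs are dual, with concurrency$+$third order versus angle$+$curvature exchanging roles between the two boundary terms. What your variant buys is a slightly shorter endgame: the real part gives $\langle\gamma^i,\nu^i_0\rangle\equiv 0$ directly, whereas the paper first obtains $\langle\gamma^i_x,\nu^i_0\rangle\equiv 0$ and must return through the ODE and $\Re(\lambda)>0$ to recover $\langle\gamma^i,\nu^i_0\rangle\equiv 0$ before concluding $\gamma^i(0)=0$ and running the tangential test.

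One caveat: your claim that the contraction estimates ``carry over since they only exploit the Banach-algebra structure'' undersells the work, and the paper explicitly flags these estimates as remarkably different. The angle condition is a first order boundary condition, so $b^1(\gamma)-b^1(\tilde\gamma)$ must be estimated in the stronger norm $C^{\frac{3+\alpha}{4}}([0,T])$, where the add-and-subtract manipulations behind Lemma~\ref{notproved} are not sufficient; the paper instead writes $p\mapsto p/\vert p\vert$ as a smooth function $f$ and uses the representation $\int_0^1 Df\bigl(\xi\gamma^i_x+(1-\xi)\tilde\gamma^i_x\bigr)\,\mathrm{d}\xi\,(\gamma^i_x-\tilde\gamma^i_x)$ to gain the factor $T^\varepsilon$. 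This is a technical rather than conceptual point, but it is not automatic and would need to be supplied to complete your argument.
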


\subsection{Proof of Theorem~\ref{geoexistence2}}

Similar arguments as used in the previous section on geometric existence and uniqueness 
justify that to find a unique geometric solution to the $C^1$--flow of a Triod
we may focus on the Analytic Problem formulated 
for an appropriate time dependent parametrisation of the flow. 
To this end we now sketch how to prove a short time
 existence result for the associated system of PDEs.

We again choose the same tangential velocity and we 
impose the tangential second order condition 
at both boundary points which leads to the following definition of admissible initial
parametrisation and to the subsequent problem.

\begin{dfnz}\label{adm1}
	A parametrisation $\varphi$
	of an initial Triod $\mathbb{T}_0$
	is admissible 
	if 
\begin{itemize}
\item each curve $\varphi^i$ is regular and in $C^{4+\alpha}([0,1])$ for some $\alpha\in(0,1)$;
\item at the triple junction $O=\gamma^i(0)$ the curves satisfy the concurrency, 
angle and tangential second order condition as well as $\sum_{i=1}^3 k^i_0=0$, $\sum_{i=1}^{3}\left(2k_{0,s}^{i}\nu_0^{i}-(k^i_0)^2\tau^{i}_0\right)=0$ and 
		\begin{equation*}
		A_0^{i}\nu_0^{i}+T_0^{i}\tau_0^{i}
		=A_0^{j}\nu_0^{j}+T_0^{j}\tau_0^{j}
		\end{equation*}
		for $i,j\in\{1,2,3\}$ 
		where $A^i_0$ and $T^i_0$ denote the equations~\eqref{Apart} and~\eqref{Tang}
		applied to $\varphi^i$;
		\item at the endpoints $P^i=\gamma^i(1)$ it holds that $\gamma^i_{xx}=0$.
	\end{itemize}
\end{dfnz}

\begin{prob}\label{analyticprob1}
	For every  $t\in\left[0,T\right),x\in\left[0,1\right]$ and for $i\in\{1,2,3\}$
	\begin{equation}\label{TriodC1}
	\begin{cases}
	\begin{array}{ll}
	\gamma_t^{i}(t,x)=-A^{i}(t,x)\nu^{i}(t,x)-T^{i}(t,x)\tau^{i}(t,x) &\text{motion,}\\
	\gamma^{1}\left(t,0\right)=\gamma^{2}\left(t,0\right)=\gamma^{3}\left(t,0\right)
	&\text{concurrency condition,}\\
	\tau^1(t,0)+\tau^2(t,0)+\tau^3(t,0)=0  &\text{angle condition,}\\
	k^{1}(t,0)+k^2(t,0)+k^3(t,0)=0  &\text{curvature condition,}\\
	\left\langle\gamma^i_{xx}(t,0),\tau^i(t,0) \right\rangle=0  
	&\text{tangential second order condition,}\\
	\sum_{i=1}^{3}\left(2k_{s}^{i}\nu^{i}-(k^i)^2\tau^{i}\right)(t,0)=0 &\text{third order condition,}\\
	\gamma^i(t,1)=\varphi^i(1) &\text{fixed endpoints,}\\
	\gamma_{xx}^{i}(t,1)=0 &\text{second order condition,}\\
	\gamma^i(0,x)=\varphi^i(x)  &
	\text{initial data}
	\end{array}
	\end{cases}
	\end{equation}
	with $\varphi^i$ admissible initial parametrisation as defined in~\ref{adm1}.
\end{prob}

\begin{teo}\label{existenceanalyticprobC1}
Let $\varphi$ be an admissible initial parametrisation. 
There exists a positive radius $M$ and a positive time $T$ 
such that the system~\eqref{TriodC1}
has a unique solution in $C^{\frac{4+\alpha}{4},4+\alpha}\left([0,T]\times[0,1]\right)\cap \overline{B_M}$.	
\end{teo}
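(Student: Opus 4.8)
The plan is to prove this theorem by repeating, with the obvious modifications, the fixed point scheme that established Theorem~\ref{existenceanalyticprob} in the $C^0$--case. I reformulate \eqref{TriodC1} as the fixed point problem $\gamma = K_T(\gamma)$ with $K_T := L_T^{-1}N_T$ on a closed ball $\mathbb{E}^\varphi_T\cap\overline{B_M}$ of the affine space adapted to the Triod, where $L_T$ is the linearisation of the left hand side of \eqref{TriodC1} and $N_T$ collects the lower order and genuinely nonlinear contributions. The definitions of $\mathbb{E}_T,\mathbb{F}_T,\mathbb{E}^\varphi_T,\mathbb{F}^\varphi_T$ are identical in form; only the boundary constraints built into the spaces change, now encoding concurrency and the tangential second order condition at $x=0$ and the fixed endpoint and second order conditions at $x=1$.

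First I would carry out the linearisation exactly as in Subsection~\ref{linearisation}. The motion equation is unchanged, so its principal part is still $\partial_t+\tfrac{2}{|\varphi^i_x|^4}\partial_x^4$; hence the verification that the system is parabolic in the sense of Solonnikov (Subsection~\ref{Solo}) and the reduction of the complementary condition on the initial data to the identity matrix both carry over verbatim. At the fixed endpoint $x=1$ the conditions $\gamma^i(1)=\varphi^i(1)$ and $\gamma^i_{xx}(1)=0$ are already linear and \emph{decoupled} across the three curves; being of the standard Navier type, the Lopatinskii--Shapiro condition there is immediate from the explicit decaying exponentials. The substantive work is at the triple junction $x=0$, where I would linearise the angle, curvature, tangential second order and third order conditions about $\varphi$ and keep their principal parts, producing homogeneous boundary operators of orders $1,2,2,3$ acting on $(\gamma^i)_{i=1,2,3}$.

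The main obstacle is to verify the Lopatinskii--Shapiro condition at $x=0$, the analogue of Lemma~\ref{LopatinskiiShapiro}, and here the difficulty is genuine: unlike the $C^0$--case, where the full second order condition $\gamma^i_{xx}(0)=0$ decouples the normal and tangential directions, the Triod conditions mix them. Testing the $i$-th model ODE with the normal projection $|\varphi^i_x|\langle\overline{\gamma}^i,\nu_0^i\rangle\nu_0^i$, integrating by parts twice and summing, one arrives at
\begin{align*}
0 &= \sum_{i=1}^3\lambda|\varphi^i_x|\int_0^\infty|\langle\gamma^i,\nu_0^i\rangle|^2\,\mathrm{d}x
+ \sum_{i=1}^3\frac{1}{|\varphi^i_x|^3}\int_0^\infty|\langle\gamma^i_{xx},\nu_0^i\rangle|^2\,\mathrm{d}x \\
&\quad + \sum_{i=1}^3\frac{1}{|\varphi^i_x|^3}\langle\overline{\gamma}^i(0),\nu_0^i\rangle\langle\gamma^i_{xxx}(0),\nu_0^i\rangle
- \sum_{i=1}^3\frac{1}{|\varphi^i_x|^3}\langle\overline{\gamma}^i_x(0),\nu_0^i\rangle\langle\gamma^i_{xx}(0),\nu_0^i\rangle.
\end{align*}
Concurrency together with the third order condition annihilates the third boundary sum exactly as before, but, crucially, the fourth boundary sum does \emph{not} vanish, since only the tangential part of $\gamma^i_{xx}(0)$ is killed (by the tangential second order condition) while its normal part survives. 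Taking real parts thus leaves a sign--indefinite term coupling $\langle\gamma^i_x(0),\nu_0^i\rangle$ with $\langle\gamma^i_{xx}(0),\nu_0^i\rangle$, which the $C^0$ argument never produced. I expect to remove it by bringing in the linearised angle and curvature conditions --- which constrain precisely these normal components --- together with the fact that the $120$ degree angle condition forces the three unit normals to be pairwise non-parallel, so that $\operatorname{span}\{\nu_0^1,\nu_0^2,\nu_0^3\}=\mathbb{R}^2$ automatically in the $C^1$ setting. Should the combined energy identity not close directly, the robust fallback is to insert the two decaying characteristic modes per curve into all twelve boundary relations and verify that the resulting $12\times 12$ homogeneous system is nonsingular for every $\lambda$ with $\Re(\lambda)>0$; this is the step I anticipate to be the most technical.

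Once the Lopatinskii--Shapiro condition is confirmed at both endpoints, Solonnikov's theory yields the analogue of Theorem~\ref{linearexistence}, so that $L_T$ is a continuous isomorphism, and the extension argument of Lemma~\ref{uniformestimateestension} provides a bound on $\|L_T^{-1}\|$ uniform in $T$. The nonlinear map $N_T$ has the same algebraic shape as in the $C^0$--case: the interior term $f^i$ is unchanged, and the new boundary terms are smooth functions of $\gamma^i_x,\gamma^i_{xx},\gamma^i_{xxx}$ evaluated at $x\in\{0,1\}$. Hence the well-definedness of $N_T$ and the contraction and self-mapping estimates --- the analogues of Lemma~\ref{welldefN} and Propositions~\ref{contraction} and~\ref{welldefined} --- follow from the same Banach--algebra manipulations and the time--smallness factor $T^\varepsilon$ furnished by Lemma~\ref{notproved}. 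The Banach fixed point theorem then produces a unique $\gamma\in\mathbb{E}^\varphi_T\cap\overline{B_M}$ with $K_T(\gamma)=\gamma$, and these fixed points are exactly the solutions of \eqref{TriodC1} in $C^{\frac{4+\alpha}{4},4+\alpha}([0,T]\times[0,1])\cap\overline{B_M}$, which is the assertion.
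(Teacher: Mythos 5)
Your overall architecture --- linearise around $\varphi$, check parabolicity and the Lopatinskii--Shapiro condition, invoke Solonnikov's theorem for the linear system, then run the contraction --- is exactly the paper's, and your handling of the endpoint $x=1$ and of the fixed-point machinery is fine. But there is a genuine gap at precisely the step you yourself flag as the heart of the matter: the Lopatinskii--Shapiro condition at the triple junction is never actually verified. You correctly compute that the $C^0$--style test function $\vert\varphi^i_x\vert\langle\overline{\gamma}^i,\nu^i_0\rangle\nu^i_0$ leaves the boundary term $\sum_{i=1}^3\frac{1}{\vert\varphi^i_x\vert^3}\langle\overline{\gamma}^i_x(0),\nu^i_0\rangle\langle\gamma^i_{xx}(0),\nu^i_0\rangle$ alive, but then you only say you \emph{expect} the angle and curvature conditions to remove it, with an unexecuted $12\times 12$ determinant as fallback; neither is a proof. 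Your expectation can in fact be made rigorous, and the missing ingredient is concrete: since the $120$--degree condition gives $\nu^1_0+\nu^2_0+\nu^3_0=0$ and any two of the normals are linearly independent, the kernel of $(a^1,a^2,a^3)\mapsto\sum_i a^i\nu^i_0$ is spanned by $(1,1,1)$; hence the linearised angle condition $\sum_i\frac{1}{\vert\varphi^i_x\vert}\langle\gamma^i_x(0),\nu^i_0\rangle\nu^i_0=0$ forces $\frac{1}{\vert\varphi^i_x\vert}\langle\gamma^i_x(0),\nu^i_0\rangle=c$ to be independent of $i$, so the offending term equals $\overline{c}\,\sum_i\frac{1}{\vert\varphi^i_x\vert^2}\langle\gamma^i_{xx}(0),\nu^i_0\rangle$, which vanishes by the linearised curvature condition. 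Without this (or an equivalent) argument, the central lemma, and with it the theorem, remains unproven; and after closing the normal estimate you would still need the two further steps the paper carries out, namely $\gamma^i(0)=0$ from concurrency plus spanning of the normals, and a second, tangential energy identity using the tangential second order condition to conclude $\gamma\equiv 0$.

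For comparison, the paper (Lemma~\ref{proofofLS}) sidesteps the problematic term altogether by testing with the normal projection of the \emph{second} derivative, $\frac{1}{\vert\varphi^i_x\vert}\overline{\langle\gamma^i_{xx},\nu^i_0\rangle}\nu^i_0$, rather than of the function: after two integrations by parts the boundary terms then pair as (function $\times$ first derivative) and (second $\times$ third derivative), and these are annihilated directly by concurrency plus the angle condition and by the curvature condition plus the third order condition, respectively --- the same kernel fact appearing there to show that the weighted third-derivative projections are all equal, equation~\eqref{thirdorderidentity}. One further caution: your claim that the contraction estimates "follow from the same Banach--algebra manipulations" is too quick for the angle condition, whose nonlinearity $b^1(\gamma)-b^1(\tilde{\gamma})$ is not of the product form treated in Step 2 of Proposition~\ref{contraction}; the paper handles it with an integral mean-value representation of $Df$ for a smooth extension $f$ of $p\mapsto p/\vert p\vert$, and some substitute for that argument is needed.
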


To prove existence of a unique solution 
$\gamma\in C^{\frac{4+\alpha}{4},4+\alpha}\left([0,T]\times[0,1]\right)$ to~\eqref{TriodC1} 
one linearises the system around the initial data, proves existence of the linear system and provides contraction estimates which guarantee the existence of a solution to the original system. 
As many arguments are exactly analogous to the case of $C^0$--Theta networks, 
we focus on  the relevant changes. 

\medskip

Hence we prove that the
Lopatinskii--Shapiro condition is fulfilled and
we describe the 
contraction estimates for the boundary terms that are remarkably different.

\subsubsection{Linearisation}

We state here the linearisation of the  remaining boundary conditions that appear in the system~\ref{TriodC1} (and do not in system~\ref{thetaC0}).

Notice that all the conditions at $x=1$ are already linear.
We linearise the conditions at $x=0$.
The angle condition becomes
\begin{align}
-\sum_{i=1}^3 \left(\frac{\gamma^i_x}{\vert \varphi^i_x\vert}
-\frac{\varphi^i_x\left\langle
\gamma^i_x,\varphi^i_x\right\rangle}{\vert \varphi^i_x\vert^3}\right)
&=\sum_{i=1}^3 \left(\left(\frac{1}{\vert \gamma^i_x\vert}
 -\frac{1}{\vert\varphi^i_x\vert}\right)\gamma^i_x +
 \frac{\varphi^i_x\left\langle \gamma^i_x,\varphi^i_x\right\rangle}{\vert \varphi^i_x\vert^3}
\right)
=:b^1(\gamma) \,.
\end{align}
Considering   the unit tangent vector $\tau_0^i$ and normal vector $\nu^i_0$ at time zero,
\begin{equation}
\tau^i_0=\frac{1}{\vert \varphi^i_x\vert}\begin{pmatrix}
\tilde{\varphi}^i_x \\
\hat{\varphi}^i_x
\end{pmatrix}\quad\quad
\nu^i_0=\frac{1}{\vert \varphi^i_x\vert}\begin{pmatrix}
\hat{\varphi}^i_x \\
-\tilde{\varphi}^i_x\,,
\end{pmatrix}
\end{equation}
the linearised version of the curvature condition is of the form:
\begin{align}
\sum_{i=1}^3\frac{1}{\vert \varphi^i_x\vert^2}
\left\langle \gamma^i_{xx},\nu_0^i\right\rangle =
\sum_{i=1}^3\frac{1}{\vert \varphi^i_x\vert^2}
\left\langle \gamma^i_{xx},\nu_0^i\right\rangle 
- \sum_{i=1}^3\frac{1}{\vert \gamma^i_x\vert^2}
\left\langle \gamma^i_{xx},\nu^i\right\rangle =:b^2(\gamma,)
\,
\end{align}
and the linearised tangential second order condition becomes:
\begin{align*}
\left\langle \gamma^i_{xx},\tau^i_0\right\rangle= 
\left\langle \gamma^i_{xx},\tau^i_0-\tau^i \right\rangle =:b^{3,i}(\gamma^i)\,.
\end{align*}

As we are linearising only the highest order part of each condition,
the linearised third order condition that will appear in the linearised system 
for~\ref{TriodC1} is the same as in the case of $C^0$ Theta--networks:
\begin{align}
\sum_{i=1}^3\frac{1}{\vert \varphi^i_x\vert^3}
\left\langle \gamma^i_{xxx},\nu_0^i\right\rangle \nu_0^i=
\sum_{i=1}^3\frac{1}{\vert \varphi^i_x\vert^3}
\left\langle \gamma^i_{xxx},\nu_0^i\right\rangle \nu_0^i
-\sum_{i=1}^3\frac{1}{\vert \gamma^i_x\vert^3}
\left\langle \gamma^i_{xxx},\nu^i\right\rangle \nu^i 
-h^i(\gamma^i_{xx},\gamma^i_x)=:-b^4
\,.
\end{align}

Thus the linearised system around the initial data associated to~\ref{TriodC1} is
\begin{equation}\label{lynTriod}
\begin{cases}
\begin{array}{lll}
\gamma^i_t+\frac{2}{\vert\varphi^i_x\vert^4}\gamma^i_{xxxx}&=f^i &\text{motion,}\\
\gamma^{1}(t,0)-\gamma^{2}(t,0)&=0 &\text{concurrency,}\\
\gamma^{1}(t,0)-\gamma^{3}(t,0)&=0 &\text{concurrency,}\\
-\sum_{i=1}^3 \left(\frac{\gamma^i_x}{\vert \varphi^i_x\vert}
-\frac{\varphi^i_x\left\langle \gamma^i_x,\varphi^i_x\right\rangle}{\vert \varphi^i_x\vert^3}\right)&=b^1(t,0) &\text{angle condition,}\\
\sum_{i=1}^3\frac{1}{\vert \varphi^i_x(0)\vert^2}
\left\langle \gamma^i_{xx}(t,0),\nu_0^i(0)\right\rangle&=b^2(t,0) &\text{curvature condition,}\\
\left\langle \gamma^i_{xx}(t,0),\tau^i_0(0)\right\rangle&=b^{3,i}(t,0) &\text{tangential second order,}\\
-\sum_{i=1}^3\frac{1}{\vert \varphi^i_x(0)\vert^3}
\left\langle \gamma^i_{xxx}(t,0),\nu_0^i(0)\right\rangle \nu_0^i(0)&=b^4(t,0)  &\text{third order,}\\
\gamma^{i}\left(t,1\right)&=P^{i}  &\text{fixed endpoints,}\\
\gamma^i_{xx}(t,1)&=0 &\text{second order,}\\
\gamma^{i}(0)&=\psi^{i} &\text{initial condition}\\
\end{array}
\end{cases}
\end{equation}
for $i\in\{1,2,3\}$
where have omitted the dependence on $(t,x)\in[0,T]\times[0,1]$ in the  motion
equation and on $x\in[0,1]$ in the initial condition.\\

\subsubsection{Lopatinskii--Shapiro condition}

\begin{dfnz}
	Let $\lambda\in\mathbb{C}$ with $\Re(\lambda)>0$ be arbitrary.
	The Lopatinskii--Shapiro condition in $x=0$
	demands that every solution $(\gamma^i)_{i=1,2,3}\in C^4([0,\infty),(\mathbb{C}^2)^3)$ to
	the system of ODEs
	\begin{equation}\label{LopatinskiiShapiroinzero}
	\begin{cases}
	\begin{array}{llll}
	\lambda \gamma^i(x)+\frac{1}{\vert\varphi^i_x \vert^4}\gamma^i_{xxxx}(x)&=0&\;x\in[0,\infty), i\in\{1,2,3\}&\;\text{motion,}\\
	\gamma^{1}(0)-\gamma^{2}(0)&=0 &\; &\;\text{concurrency,}\\
	\gamma^{2}(0)-\gamma^{3}(0)&=0 &\; &\;\text{concurrency,}\\
	\sum_{i=1}^3 \frac{1}{\vert\varphi^i_x\vert}\langle\gamma^i_x(0),\nu^i_0\rangle\nu^i_0&=0 &\;&\;\text{angle condition,}\\
	\sum_{i=1}^3\frac{1}{\vert \varphi^i_x\vert^2}
	\left\langle \gamma^i_{xx}(0),\nu_0^i\right\rangle&=0&\;&\;\text{curvature condition,}\\
	\left\langle \gamma^i_{xx}(0),\tau^i_0\right\rangle&=0 &\;i\in\{1,2,3\}&\;\text{tangential second order,}\\
	\sum_{i=1}^3\frac{1}{\vert \varphi_x^i\vert ^3}
	\left\langle \gamma^i_{xxx}(0),\nu_0^i\right\rangle \nu_0^i&=0 &\; &\;\text{third order}\\
	\end{array}
	\end{cases}
	\end{equation}
	which satisfies $\lim_{x\to\infty}\lvert \gamma^i(x)\rvert=0$ has to be the trivial solution.
	In addition, the Lopatinskii--Shapiro condition in $x=1$
	demands that every solution $(\gamma^i)_{i=1,2,3}\in C^4([0,\infty),(\mathbb{C}^2)^3)$ to
	the system of ODEs
	\begin{equation}\label{LopatinskiiShapiroinone}
	\begin{cases}
	\begin{array}{llll}
	\lambda \gamma^i(x)+\frac{1}{\vert\varphi^i_x \vert^4}\gamma^i_{xxxx}(x)&=0&\;x\in[0,\infty), i\in\{1,2,3\}&\;\text{motion,}\\
	\gamma^{i}(0)&=0 &\;i\in\{1,2,3\}&\;\text{fixed endpoints,}\\
	\gamma^i_{xx}(0)&=0 &\;i\in\{1,2,3\}&\;\text{second order}\\
	\end{array}
	\end{cases}
	\end{equation}
	which satisfies $\lim_{x\to\infty}\lvert \gamma^i(x)\rvert=0$ has to be the trivial solution.
	Notice that we have omitted the dependence of $\varphi^i_x,\tau^i_0,\nu^i_0$
	on $x=0$ in the first system and $x=1$ in the second system, respectively.
\end{dfnz}

\begin{lemma}\label{proofofLS}
	The Lopatinskii--Shapiro conditions are satisfied.
\end{lemma}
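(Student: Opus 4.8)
The plan is to check the two decoupled systems \eqref{LopatinskiiShapiroinzero} and \eqref{LopatinskiiShapiroinone} separately, in both cases by an integration-by-parts (energy) argument that runs exactly parallel to the proof of Lemma~\ref{LopatinskiiShapiro}; the case $y=1$ of the junction and the endpoint systems only differ by relabelling, so it suffices to treat $x=0$ and $x=1$.

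First I would dispose of the endpoint system \eqref{LopatinskiiShapiroinone}, where the three curves are uncoupled. For each $i$ I test the motion equation with $|\varphi^i_x|^4\overline{\gamma}^i$, integrate twice by parts on $[0,\infty)$ and use the decay at infinity. The two boundary contributions at $x=0$ are $\langle\gamma^i_{xxx}(0),\overline{\gamma^i(0)}\rangle$ and $\langle\gamma^i_{xx}(0),\overline{\gamma^i_x(0)}\rangle$; the first vanishes by the fixed-endpoint condition $\gamma^i(0)=0$ and the second by the second order condition $\gamma^i_{xx}(0)=0$. Taking real parts and using $\Re(\lambda)>0$ forces $\int_0^\infty|\gamma^i|^2=0$, hence $\gamma^i\equiv 0$.

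The junction system \eqref{LopatinskiiShapiroinzero} is the substantial part. Writing $\gamma^i=g^i\nu^i_0+h^i\tau^i_0$ with $g^i=\langle\gamma^i,\nu^i_0\rangle$ and $h^i=\langle\gamma^i,\tau^i_0\rangle$, the motion equation splits into scalar equations for $g^i$ and for $h^i$. I first test the $g^i$-equation with $|\varphi^i_x|\,\overline{g^i}$, integrate by parts twice and sum over $i$. Exactly as in the $C^0$ case, the order-$(0,3)$ boundary term $\sum_i|\varphi^i_x|^{-3}\overline{g^i(0)}\langle\gamma^i_{xxx}(0),\nu^i_0\rangle$ collapses, via concurrency and the third order condition, to $\langle\overline{\gamma(0)},\sum_i|\varphi^i_x|^{-3}\langle\gamma^i_{xxx}(0),\nu^i_0\rangle\nu^i_0\rangle=0$. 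The new difficulty, and the main obstacle, is the order-$(1,2)$ term $\sum_i|\varphi^i_x|^{-3}\overline{\langle\gamma^i_x(0),\nu^i_0\rangle}\langle\gamma^i_{xx}(0),\nu^i_0\rangle$, which no longer vanishes termwise since the $C^1$ problem carries no pointwise second order condition. The key observation is that the weights cancel exactly: with $a_i:=|\varphi^i_x|^{-1}\langle\gamma^i_x(0),\nu^i_0\rangle$ and $b_i:=|\varphi^i_x|^{-2}\langle\gamma^i_{xx}(0),\nu^i_0\rangle$ this term equals $\sum_i\overline{a_i}\,b_i$. Because the initial triod satisfies the angle condition we have $\nu^1_0+\nu^2_0+\nu^3_0=0$, and since any two of the $\nu^i_0$ are independent the kernel of $(a_1,a_2,a_3)\mapsto\sum_i a_i\nu^i_0$ is spanned by $(1,1,1)$; hence the linearised angle condition $\sum_i a_i\nu^i_0=0$ forces $a_1=a_2=a_3$, and the curvature condition $\sum_i b_i=0$ then gives $\sum_i\overline{a_i}\,b_i=\overline{a_1}\sum_i b_i=0$. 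Taking real parts yields $g^i\equiv 0$.

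It remains to run the tangential test, which now closes because of what we already know. From $g^i\equiv 0$ we get $\langle\gamma(0),\nu^i_0\rangle=0$ for all $i$ (using concurrency), and since $\mathrm{span}\{\nu^1_0,\nu^2_0,\nu^3_0\}=\mathbb{R}^2$ this forces $\gamma^i(0)=0$. Testing the $h^i$-equation with $|\varphi^i_x|\,\overline{h^i}$, the order-$(0,3)$ boundary term vanishes because $h^i(0)=\langle\gamma^i(0),\tau^i_0\rangle=0$, while the order-$(1,2)$ term vanishes by the tangential second order condition $\langle\gamma^i_{xx}(0),\tau^i_0\rangle=0$. Real parts then give $h^i\equiv 0$, whence $\gamma^i=g^i\nu^i_0+h^i\tau^i_0\equiv 0$. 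I expect the only genuinely new ingredient compared to Lemma~\ref{LopatinskiiShapiro} to be this weight cancellation combined with the $120$ degree relation, which is precisely what pairs the angle and curvature conditions into the coercive combination needed; everything else is the same bookkeeping.
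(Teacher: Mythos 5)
Your proof is correct, but it is organised differently from the paper's, and the difference is worth recording. You keep the test functions of the $C^0$ case (Lemma~\ref{LopatinskiiShapiro}): you multiply the projected motion equation by the zeroth-order quantities $\overline{\langle\gamma^i,\nu^i_0\rangle}$ and $\overline{\langle\gamma^i,\tau^i_0\rangle}$, so the boundary terms that appear pair the orders $(0,3)$ and $(1,2)$; you then kill the $(0,3)$ term by concurrency plus the third order condition, and the $(1,2)$ term by your weight-cancellation observation, i.e.\ the kernel argument ($\sum_i\nu^i_0=0$ with pairwise independent normals forces the kernel of $(a_1,a_2,a_3)\mapsto\sum_i a_i\nu^i_0$ to be spanned by $(1,1,1)$) applied to the \emph{angle} condition, followed by the curvature condition. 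The paper instead changes the test function to the second-derivative projections $\overline{\langle\gamma^i_{xx},\nu^i_0\rangle}$ and $\overline{\langle\gamma^i_{xx},\tau^i_0\rangle}$, so its boundary terms pair the orders $(0,1)$ and $(3,2)$: the first is killed by concurrency plus the angle condition directly, and the second by the same kernel argument applied to the \emph{third order} condition (identity~\eqref{thirdorderidentity}) followed by the curvature condition. The two proofs are thus dual to each other — each uses the geometric input $\sum_i\nu^i_0=0$ exactly once, but on a different boundary condition — and both are equally valid. Your choice buys a slightly cleaner ending: your energy identity contains $\int_0^\infty|\langle\gamma^i,\nu^i_0\rangle|^2\,\mathrm{d}x$, so taking real parts gives $\langle\gamma^i,\nu^i_0\rangle\equiv0$ immediately, whereas the paper obtains only $\langle\gamma^i_x,\nu^i_0\rangle\equiv0$ and must return to the motion equation (using $\Re(\lambda)>0$) to conclude; the same remark applies to the tangential step. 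Finally, for the system~\eqref{LopatinskiiShapiroinone} at the fixed endpoints you run the same integration-by-parts argument, while the paper simply observes that the decoupled ODEs can be solved explicitly; both settle that case. The only point you should make explicit is the one the paper records at the start of Lemma~\ref{LopatinskiiShapiro}: the decay of $\gamma^i$ at infinity carries over to all derivatives (by the exponential form of solutions of the ODE), which is what legitimises discarding the boundary terms at $+\infty$ in every integration by parts.
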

\begin{proof}
	We show that the condition is satisfied at $x=0$.
	Let $\lambda\in\mathbb{C}$ with $\Re(\lambda)>0$  and let $\gamma$ be a solution to the system
	which satisfies $\lim_{x\to\infty}\lvert \gamma^i(x)\rvert=0$.
	Since $\varphi$ is an admissible initial parametrisation,  the angle condition $\sum_{i=1}^3 \tau_0^i=0$
	implies that it holds  $\sum_{i=1}^3 \nu^i_0=0$.
Combining this with the third order condition we have 
	\begin{equation}
	\frac{1}{\vert \varphi_x^1\vert ^3}
	\left\langle \gamma^1_{xxx}(0),\nu_0^1\right\rangle
	=\frac{1}{\vert \varphi_x^2\vert ^3}
	\left\langle \gamma^2_{xxx}(0),\nu_0^2\right\rangle
	=\frac{1}{\vert \varphi_x^3\vert ^3}
	\left\langle \gamma^3_{xxx}(0),\nu_0^3\right\rangle\,.\label{thirdorderidentity}
	\end{equation}
	Multiplying the motion equation by $\frac{1}{\vert\varphi^i_x\vert}\overline{\langle\gamma^i_{xx}(x),\nu^i_0\rangle}\nu^i_0$,
	summing and integrating we obtain
	\begin{align}
	0&=\lambda\sum_{i=1}^{3}\frac{1}{\vert\varphi^i_x\vert}\int_{0}^{\infty}\langle\gamma^i,\nu^i_0\rangle\overline{\langle\gamma^i_{xx},\nu^i_0\rangle}\mathrm{d}x+\sum_{i=1}^{3}\frac{1}{\vert\varphi^i_x\vert^5}\int_{0}^{\infty}\langle\gamma^i_{xxxx},\nu^i_0\rangle\overline{\langle\gamma^i_{xx},\nu^i_0\rangle}\mathrm{d}x\\
	&=-\lambda\sum_{i=1}^{3}\frac{1}{\vert\varphi^i_x\vert}\int_{0}^{\infty}\left\vert\langle\gamma^i_x,\nu^i_0\rangle\right\vert^2\mathrm{d}x-\sum_{i=1}^{3}\frac{1}{\vert\varphi^i_x\vert^5}\int_{0}^{\infty}\left\vert\langle\gamma^i_{xxx},\nu^i_0\rangle\right\vert^2\mathrm{d}x\\
	&+\lambda\sum_{i=1}^{3}\frac{1}{\vert\varphi^i_x\vert}\langle\gamma^i(0),\nu^i_0\rangle\overline{\langle\gamma^i_x(0),\nu^i_0\rangle}+\sum_{i=1}^{3}\frac{1}{\vert\varphi^i_x\vert^5}\langle\gamma^i_{xxx}(0),\nu^i_0\rangle\overline{\langle\gamma^i_{xx}(0),\nu^i_0\rangle}\,.\label{LopShap}
	\end{align}
The concurrency and the angle condition imply that the first boundary term vanishes:
	\begin{equation}
	\left\langle\gamma^1(0),\sum_{i=1}^{3}\frac{1}{\vert\varphi^i_x\vert}\overline{\langle\gamma^i_x(0),\nu^i_0\rangle}\nu^i_0\right\rangle=0\,.
	\end{equation}
Using~\eqref{thirdorderidentity} and the curvature condition we see that the other boundary term is given by	
\begin{equation}
\frac{1}{\vert \varphi_x^1\vert ^3}
\left\langle \gamma^1_{xxx}(0),\nu_0^1\right\rangle\sum_{i=1}^{3}\frac{1}{\vert\varphi^i_x\vert^2}\overline{\langle\gamma^i_{xx}(0),\nu^i_0\rangle}=0\,.
\end{equation}
Considering the real part of~\eqref{LopShap} we conclude that $\langle\gamma^i_x(x),\nu^i_0\rangle=0$ for all $x\geq 0$ which implies $\langle\gamma^i_{xxxx}(x),\nu^i_0\rangle=0$ for all $x\geq 0$. Using the motion equation and the fact that $\Re(\lambda)>0$ we obtain $\langle\gamma^i(x),\nu^i_0\rangle =0$ for all $x\geq 0$ and in particular $\langle\gamma^1(0),\sum_{i=1}^{3}a^i\nu_0^i\rangle=0$ for all $a\in\mathbb{R}^3$. As the vectors $\nu^i_0$ span all of $\mathbb{R}^2$ due to the angle condition at initial time, we conclude that $\gamma^i(0)=0$ for all $i\in\{1,2,3\}$. 

We now proceed similarly as in the $C^0$-case ~\ref{LopatinskiiShapiro}. 

Testing the motion equation by $\frac{1}{\vert\varphi^i_x\vert}\overline{\langle\gamma^i_{xx},\tau^i_0\rangle}\tau^i_0$, summing and integrating by part gives
\begin{align*}
	0&=-\lambda\sum_{i=1}^{3}\frac{1}{\vert\varphi^i_x\vert}\int_{0}^{\infty}\left\vert\langle\gamma^i_x,\tau^i_0\rangle\right\vert^2\mathrm{d}x-\sum_{i=1}^{3}\frac{1}{\vert\varphi^i_x\vert^5}\int_{0}^{\infty}\left\vert\langle\gamma^i_{xxx},\tau^i_0\rangle\right\vert^2\mathrm{d}x\\
	&+\lambda\sum_{i=1}^{3}\frac{1}{\vert\varphi^i_x\vert}\langle\gamma^i(0),\tau^i_0\rangle\overline{\langle\gamma^i_x(0),\tau^i_0\rangle}+\sum_{i=1}^{3}\frac{1}{\vert\varphi^i_x\vert^5}\langle\gamma^i_{xxx}(0),\tau^i_0\rangle\overline{\langle\gamma^i_{xx}(0),\tau^i_0\rangle}\,.
\end{align*}
The two boundary terms vanish due to $\gamma^i(0)=0$ and the tangential second order condition.
Considering the real part we find $\langle\gamma^i_x(x),\tau^i_0\rangle=0$ for all $x\geq 0$ and hence $\gamma^i_x(x)=0$ for all $x\geq 0$. Differentiating and using the motion equation and $\Re(\lambda)>0$ implies that $\gamma$ is the trivial solution.

In the case  $x=1$ the solution to the ODE can be calculated explicitly and the result 
is trivial. 
\end{proof}

\subsubsection{Contraction estimates}

We now give some remarks on the contraction estimates that are needed to deduce well--posedness of~\eqref{TriodC1} from existence of the linearised system. As the motion equation is unchanged and all boundary terms at $x=1$ are linear, we only provide arguments for the non--linear boundary conditions at $x=0$.

In all the estimates we use the identification of the function spaces presented in~\ref{parabolicspaces}.

\medskip

As the third order conditions in~\eqref{TriodC1} and~\eqref{thetaC0} only differ in their lower order terms, it remains to estimate the expression
\begin{equation}\label{thirdremain}
\frac{\gamma^i_x\vert\gamma^i_{xx}\vert^2}{\vert\gamma^i_x\vert^5}-\frac{\tilde{\gamma}^i_x\vert\tilde{\gamma}^i_{xx}\vert^2}{\vert\tilde{\gamma}^i_x\vert^5}\,
\end{equation}
in the norm of $C^{\frac{1+\alpha}{4}}\left([0,T];(\mathbb{R}^3)^2\right)$.
By adding and subtracting and using
\begin{align*}
\frac{1}{a^5}-\frac{1}{b^5}=\left(\frac{1}{a}-\frac{1}{b}\right)\left(\frac{1}{a^4}+\frac{1}{a^3b}+\frac{1}{a^2b^2}+\frac{1}{ab^3}+\frac{1}{b^4}\right)\,.
\end{align*}
the expression~\eqref{thirdremain} can be written in the following form:
\begin{align*}
\left(\gamma^i_x-\tilde{\gamma}^i_x\right)h(\gamma_x,\gamma_{xx},\tilde{\gamma}_x,\tilde{\gamma}_{xx})+\left(\gamma^i_{xx}-\tilde{\gamma^i_{xx}}\right)g(\gamma_x,\gamma_{xx},\tilde{\gamma}_x,\tilde{\gamma}_{xx})\,.
\end{align*}
Using Lemma~\ref{notprovedbis} the whole term can be estimated by 
$\tilde{C}(M,C) T^\varepsilon\Vert\gamma-\tilde{\gamma}\Vert_{\mathbb{E}_T}$.

Moreover we need to estimate
\begin{align*}
\langle\gamma^i_{xx},\tau^i_0-\tau^i\rangle-\langle\tilde{\gamma}^i_{xx},\tau^i_0-\tilde{\tau}^i\rangle=\langle\gamma^i_{xx}-\tilde{\gamma}^i_{xx},\tau^i_0-\tau^i\rangle+\langle\tilde{\gamma}^i_{xx},\tilde{\tau}^i-\tau^i\rangle\,
\end{align*}
and 
\begin{align*}
&\sum_{i=1}^3\frac{1}{\vert \varphi^i_x\vert^2}
\left\langle \gamma^i_{xx},\nu_0^i\right\rangle 
- \sum_{i=1}^3\frac{1}{\vert \gamma^i_x\vert^2}
\left\langle \gamma^i_{xx},\nu^i\right\rangle -\sum_{i=1}^3\frac{1}{\vert \varphi^i_x\vert^2}
\left\langle \tilde{\gamma}^i_{xx},\nu_0^i\right\rangle 
+ \sum_{i=1}^3\frac{1}{\vert \tilde{\gamma}^i_x\vert^2}
\left\langle \tilde{\gamma}^i_{xx},\tilde{\nu}^i\right\rangle\\
=&\sum_{i=1}^{3}\left(\frac{1}{\vert\varphi^i_x\vert^2}-\frac{1}{\vert\gamma^i_x\vert^2}\right)\langle\gamma^i_{xx}-\tilde{\gamma}^i_{xx},\nu^i_0\rangle+\sum_{i=1}^{3}\frac{1}{\vert\gamma^i_x\vert^2}\langle\gamma^i_{xx}-\tilde{\gamma}^i_{xx},\nu^i_0-\nu^i\rangle\\
+&\sum_{i=1}^{3}\left(\frac{1}{\vert\tilde{\gamma}^i_x\vert^2}-\frac{1}{\vert\gamma^i_x\vert^2}\right)\langle\tilde{\gamma}^i_{xx},\nu^i\rangle+\sum_{i=1}^{3}\frac{1}{\vert\tilde{\gamma}^i_x\vert^2}\langle\tilde{\gamma}^i_{xx},\tilde{\nu}^i-\nu^i\rangle
\end{align*}
in the norm of $C^{\frac{2+\alpha}{4}}\left([0,T];(\mathbb{R}^3)^2\right)$
which can be done as before using~\ref{notprovedbis}.

It remains to study the angle condition.
We let $f:\mathbb{R}^2\to\mathbb{R}^2$ be a smooth function such that $f(p)=\frac{p}{\vert p\vert}$ on $\mathbb{R}^2\setminus B_{C/4}(0)$. As $\gamma^i_x(t), \tilde{\gamma}^i_x(t)$ and $\varphi^i_x$ all lie in $\mathbb{R}^2\setminus B_{C/2}(0)$ for $t\in (0,\tilde{t}(M,C)]$, we can write the expression $b^1(\gamma)-b^1(\tilde{\gamma})$ as
\begin{align*}
&\sum_{i=1}^{3}f(\gamma^i_x)-f(\tilde{\gamma}^i_x)-Df(\varphi^i_x)\gamma^i_x+Df(\varphi^i_x)\tilde{\gamma}^i_x\\
=&\sum_{i=1}^{3}\left( \int_{0}^1 Df(\xi\gamma^i_x+(1-\xi)\tilde{\gamma}^i_x)\,\mathrm{d}\xi-Df(\varphi^i_x)\right) 
(\gamma^i_x-\tilde{\gamma}^i_x)\,.
\end{align*}
The function
$$
t\mapsto g^i(t):=\int_{0}^1 Df(\xi\gamma_x^i(t)+(1-\xi)\tilde{\gamma}^i_x(t))\,\mathrm{d}\xi
$$
is in $C^{\frac{3+\alpha}{4}}\left([0,T];\left(\mathbb{R}^{2\times 2}\right)^2\right)$ and $g(0)=Df(\varphi_x)$ which implies
$$
\sup_{t\in[0,T]}\lVert g^i(t)-g^i(0)\rVert\leq T^{\frac{3+\alpha}{4}} \lVert g^i\rVert_{C^{\frac{3+\alpha}{4}}([0,T])}
$$
for any matrix norm $\lVert\cdot\rVert$. As both $Df$ and $D^2f$ are bounded on  $\overline{B_{M}(0)}$, we have for all $T\in (0,\tilde{t}(M,C)]$ that $\lVert g^i\rVert_{C^{\frac{3+\alpha}{4}}([0,T])}\leq C(M,C)$ . Thus we conclude that
\begin{align*}
&\lVert \left(g^i-g^i(0)\right)\left(\gamma^i_x-\tilde{\gamma}^i_x\right)\rVert_{C^{\frac{3+\alpha}{4}}([0,T])}\\
&\leq \sup_{t\in[0,T]}\lVert g^i(t)-g^i(0)\rVert\lVert\gamma^i_x-\tilde{\gamma}^i_x\rVert_{C^{\frac{3+\alpha}{4}}([0,T])}+\lVert g^i-g^i(0)\rVert_{C^{\frac{3+\alpha}{4}}([0,T])}\sup_{t\in[0,T]}\lvert \gamma^i_x(t)-\tilde{\gamma}^i_x(t)\rvert\\
&\leq C(M,C) T^{\varepsilon}\lVert\gamma-\tilde{\gamma}\rVert_{\mathbb{E}_T}\,
\end{align*}
where we used
$$
\sup_{t\in[0,T]}\lvert\gamma^i_x(t)-\tilde{\gamma}^i_x(t)\rvert \leq \lVert\gamma^i_x-\tilde{\gamma}^i_x\rVert_{C^{\frac{\alpha}{4},\alpha}([0,T]\times[0,1])}\leq T^\varepsilon\lVert \gamma-\tilde{\gamma}\rVert_{\mathbb{E}_T}\,
$$
which follows from Lemma~\ref{notproved}.

As all other arguments are similar to the $C^0$--case we hence
established Theorem 
\ref{geoexistence2}.

\bibliographystyle{amsplain}
\bibliography{elasticflow}

\end{document}